\documentclass[a4paper,11p]{amsart}

\usepackage[all,arc]{xy}
\usepackage[dvipsnames]{xcolor}
\usepackage[textwidth=16cm, textheight=24cm, marginratio=1:1]{geometry}

\usepackage{graphicx} % Required for inserting images
\usepackage[centertags]{amsmath}
\usepackage{amscd}
\usepackage{amsthm}
\usepackage{amssymb}
\usepackage{dsfont} %%Package for evaluation functions
\usepackage{MnSymbol} %%Package for the star operator
\usepackage{algorithm}
\usepackage{algpseudocode}
\usepackage{tikz-cd}
\usetikzlibrary{cd}
\usepackage{tikz}
\usepackage{relsize}
\usepackage{ytableau}
\usepackage{mdframed}
\usepackage{comment}
\usepackage{enumitem}
\usepackage[utf8]{inputenc}
\usepackage[T1]{fontenc}

\usepackage{todonotes}

\usepackage{textcomp}

\usepackage{hyperref}
\hypersetup{
    colorlinks,
    linkcolor={MidnightBlue},
    citecolor={Green}
}
\usepackage[nameinlink]{cleveref}

\usepackage[normalem]{ulem}

\usepackage{enumitem}

\newcommand{\m}{\mathfrak m}
\usepackage{aliascnt}

\newtheorem{defn0}{Definition}[section]

\newaliascnt{prop0}{defn0}
\newtheorem{prop0}[prop0]{Proposition}
\aliascntresetthe{prop0}

\newaliascnt{thm0}{defn0}
\newtheorem{thm0}[thm0]{Theorem}
\aliascntresetthe{thm0}

\newaliascnt{lemma0}{defn0}
\newtheorem{lemma0}[lemma0]{Lemma}
\aliascntresetthe{lemma0}

\newaliascnt{corollary0}{defn0}
\newtheorem{corollary0}[corollary0]{Corollary}
\aliascntresetthe{corollary0}

\newaliascnt{example0}{defn0}
\newtheorem{example0}[example0]{Example}
\aliascntresetthe{example0}

\newaliascnt{remark0}{defn0}
\newtheorem{remark0}[remark0]{Remark}
\aliascntresetthe{remark0}

\newtheorem{claim}{Claim}

\newaliascnt{conjecture0}{defn0}
\newtheorem{conjecture0}[conjecture0]{Conjecture}
\aliascntresetthe{conjecture0}

\crefname{example0}{Example}{Examples}
\Crefname{example0}{Example}{Examples}

\crefname{remark0}{Remark}{Remarks}
\Crefname{remark0}{Remark}{Remarks}

\crefname{thm0}{Theorem}{Theorems}
\Crefname{thm0}{Theorem}{Theorems}

\crefname{prop0}{Proposition}{Propositions}
\Crefname{prop0}{Proposition}{Propositions}

\crefname{lemma0}{Lemma}{Lemmas}
\Crefname{lemma0}{Lemma}{Lemmas}

\crefname{corollary0}{Corollary}{Corollaries}
\Crefname{corollary0}{Corollary}{Corollaries}

\crefname{defn0}{Definition}{Definitions}
\Crefname{defn0}{Definition}{Definitions}

\crefname{conjecture0}{Conjecture}{Conjectures}
\Crefname{conjecture0}{Conjecture}{Conjectures}

\DeclareFontFamily{U}{MnSymbolC}{}
\DeclareSymbolFont{MnSyC}{U}{MnSymbolC}{m}{n}
\DeclareFontShape{U}{MnSymbolC}{m}{n}{
    <-6>  MnSymbolC5
   <6-7>  MnSymbolC6
   <7-8>  MnSymbolC7
   <8-9>  MnSymbolC8
   <9-10> MnSymbolC9
  <10-12> MnSymbolC10
  <12->   MnSymbolC12}{}
\DeclareMathSymbol{\aprod}{\mathbin}{MnSyC}{'270}

\newenvironment{definition}{ \begin{defn0}}{\end{defn0}}
\newenvironment{proposition}{\bigskip \begin{prop0}}{\end{prop0}}
\newenvironment{theorem}{\bigskip \begin{thm0}}{\end{thm0}}
\newenvironment{lemma}{\bigskip \begin{lemma0}}{\end{lemma0}}
\newenvironment{corollary}{\bigskip \begin{corollary0}}{\end{corollary0}}
\newenvironment{example}{ \begin{example0}\rm}{\end{example0}}
\newenvironment{remark}{ \begin{remark0}\rm}{\end{remark0}}

\newtheorem{alg}{Algorithm}
\newcommand{\K}{\mathbb K}

\newcommand{\ale}[1]{{\color{magenta}#1}}

\DeclareMathOperator{\Sym}{Sym}

\title{Hankel and Multiplication Tensor Completions for Cactus Rank}

\author{Alessandra Bernardi, Joachim Jelisiejew, Oriol Reig Fité}

\address[Alessandra Bernardi, Oriol Reig Fité]{Universit\`a di Trento, Via Sommarive, 14 - 38123 Povo (Trento), Italy}
\email{alessandra.bernardi@unitn.it, oriol.reigfite@unitn.it}

\address[Joachim Jelisiejew]{Wydział Matematyki, Informatyki i Mechaniki, Uniwersytet Warszawski, ul.~Stefana Banacha 2, 02-097 Warsaw, Poland.}
\email{j.jelisiejew@uw.edu.pl}

\makeatletter
\@namedef{subjclassname@2020}{\textup{2020} Mathematics Subject Classification}
\makeatother

\subjclass[2020]{14N07, 13H10}
\keywords{Symmetric tensors, Tensor decomposition, Local cactus rank, Gorenstein algebras,  Algorithms.}

\begin{document}
\begin{abstract}
We show that the Hankel flat extension
formulation of the cactus algorithm is equivalent to a completion problem for
multiplication tensors of Artinian Gorenstein algebras. The unknown Hankel
moments are canonically identified with the undetermined tensor coefficients,
and under this identification the symbolic multiplication matrices and their
commutation equations coincide. This shows that the usual degree extension
formulation is a coordinate realization of a variable extension problem with
marked generators. We further use Borel-fixed and squat staircases to reduce
the family of candidate basis shapes in the resulting algorithm.
\end{abstract}
\maketitle

\def\OldIntro{
\section*{Introduction}

Decomposing a homogeneous polynomial into simpler pieces is a central problem in Algebraic Geometry, Computational Algebra, and Applications.
Among the many notions of complexity that arise in this context, the \emph{cactus rank} of a form $F$ measures the minimal length of a zero--dimensional scheme $Z\subset \mathbb{P}V$ whose degree-$d$ Veronese span contains $F$.
Compared to classical Waring rank, cactus rank is intrinsically scheme--theoretic: it allows non-reduced schemes (cf. \cite{Bernardi2018, BR13}).
The \emph{local cactus rank} further restricts to schemes supported at a single point, providing a refined invariant that is particularly natural from the viewpoint of Artinian local algebras.

Apolar duality offers an effective bridge between schemes and tensors.
Given a form $F\in \Sym^d(V)$ (or its divided--power analogue), a finite scheme $Z=\mathrm{Spec}(A)\subset \mathbb{P}V$ is apolar to $F$ if the defining ideal of $Z$ is contained in the apolar ideal $\mathrm{Ann}(F)$.
When $Z$ is Gorenstein, apolarity can be encoded by a single dual generator (Macaulay inverse system) and the associated Artinian Gorenstein algebra $A$ carries all the algebraic structure.
In particular, the multiplication in $A$ can be represented by its so-called \emph{structure tensor} (cf. \cite{MinBrk}), or equivalently by the family of multiplication matrices with respect to a basis.

This motivates the main viewpoint of the present paper.
In the literature, the cactus algorithm is usually presented in terms of Hankel/catalecticant matrices and flat extension (\cite{MR3250539,Alessandra, MourrainFOCM}), while Artinian Gorenstein algebras are often encoded through multiplication tensors (cf. \cite{Joachim}).
We propose a unified perspective that makes the two languages meet: we compare their underlying constraints, clarify similarities and differences, and show how the classical Hankel-based cactus algorithm can be recovered from the multiplication-tensor formalism.
Moreover, this viewpoint naturally leads to a \emph{completion} interpretation, which in turn allows extensions beyond the traditional degree-extension paradigm: besides adding higher-degree moments, one can also enlarge the ambient space by introducing new variables corresponding to additional basis elements of the unknown algebra.

More concretely, we show that apolarity to $F$ can be expressed via a \emph{restriction} of a multiplication tensor.
If $A$ is a length-$r$ Artinian Gorenstein algebra, its $(d-1)$-st iterated multiplication tensor (in symmetric form) yields a concise element
\[
\mu_{A,s}^{(d-1)} \in \Sym^d(A^*).
\]
Our first main result proves that, whenever $\operatorname{crk}(F)\le r$ (with $r\ge n+1$), there exist a vector space $W$ of dimension $r$, a concise tensor $G\in \Sym^d(W)$ of cactus rank $r$, and a linear map $\varphi:W\to V$ such that
\[
\Sym(\varphi)(G)=F,
\]
and moreover $G$ can be chosen to be (isomorphic to) a multiplication tensor of an Artinian Gorenstein algebra of length $r$.
This provides a conceptual explanation of why multiplication tensors are the natural ambient objects for cactus computations, and it isolates the role of the linear map $\varphi$ as a restriction operator.

Building on this, we give an equivalent formulation of the cactus algorithm of \cite{Alessandra}.
In the classical degree--extension approach, one completes a truncated dual generator by introducing unknown higher moments, and then enforces commutation relations for the induced multiplication matrices.
Here we reinterpret the same constraints as the defining equations of a \emph{tensor completion problem} for the multiplication tensor, equipped with \emph{marked generators} corresponding to the original variables.
This viewpoint has two advantages.
First, it is \emph{basis--free} at the conceptual level, while still allowing effective computations after a suitable gauge choice.
Second, it naturally accommodates \emph{variable--extension} strategies, where one enlarges the ambient space by new variables that represent additional basis elements of the unknown algebra, rather than only adding higher--degree moments.
We formalize this as the \emph{Multiplication Tensor--Completion with Marked Generators} (MT--CMG) problem and provide an explicit algorithmic template based on iterated multiplication tensors.

We also discuss two complementary computational aspects.
On the one hand, we give practical criteria to recognize when a concise form has \emph{minimal} cactus rank $n+1$ (cf. \cite{Buczynska_Buczynski__border}),  leading to a simple decision procedure based on low--degree apolar data and the Hessian.
On the other hand, once multiplication matrices are obtained, we explain how to recover the scheme structure effectively (including nonreduced information) via border/staircase rewriting relations.
Finally, we address an important implementation issue: the choice of bases.
Although the tensor formulation is invariant, concrete computations require selecting a basis of the unknown algebra.
We show that one may work, without loss of generality, in a \emph{staircase gauge} (complete staircase bases), which both reduces the search space and typically improves sparsity of the resulting multiplication matrices.

\textbf{Organization.}
\Cref{sec:prelim} collects background on symmetric tensors, apolarity, and basic notions of rank.
\Cref{Section2} recalls structure tensors of Artinian Gorenstein algebras and their symmetric iterated multiplication tensors.
\Cref{{Section:The cactus algorithm}}  reviews the cactus algorithm via degree extension following \cite{Alessandra}.
In \Cref{Section4}  we develop the restriction viewpoint and relate apolar schemes to multiplication tensors.
\Cref{Sec:Recovering the cactus algorithm}  shows how the classical degree--extension algorithm is recovered from our tensor formulation, and \Cref{{Section:Scheme structure from multiplication matrices}}  explains how to reconstruct scheme structure from multiplication matrices.
\Cref{Section 6} discusses staircase gauges, embedded dimension, and the practical choice of bases.
Finally, \Cref{Section8} introduces the MT--CMG completion problem and presents a variable--extension template via iterated multiplication tensors.

}

\section*{Introduction}

Two well-established ways of encoding a finite commutative algebra occur in
apparently distant parts of mathematics. Both draw on long, largely independent
traditions and have become basic tools in their respective areas. In
computational algebra and moment theory, one works with Hankel operators and
commuting multiplication matrices. In tensor geometry and algebraic complexity,
one works with multiplication tensors and with the compatible endomorphisms
recorded by their centroid. At the heart of both lies the same inverse problem:
recover a finite algebra from incomplete data. The main point of this paper is
that, in the cactus-rank setting, the two constructions lead to the same
symbolic completion problem.

The first point of view has classical roots in the theory of sums of powers.
Sylvester related representations of binary forms to kernels of catalecticant
matrices \cite{Sylvester1851}, while Macaulay's inverse systems encoded an
Artinian Gorenstein algebra by a single dual generator
\cite{Macaulay1916,IK}. For a form $F\in\Sym^d(V)$, the Waring rank is the
minimum length of a reduced zero-dimensional scheme $Z\subset\mathbb P V$ such
that
\[
F\in\langle\nu_d(Z)\rangle.
\]
The cactus rank is defined by the same Veronese-span condition, allowing
arbitrary zero-dimensional schemes
\cite{BR13,Bernardi2018,Buczynska_Buczynski__border}. By apolarity, this
condition is equivalent to
\[
I(Z)\subseteq\operatorname{Ann}(F),
\]
and for cactus rank computations one may work with Gorenstein apolar schemes
\cite{JarekWer}. After dehomogenizing $F$, the coefficients of the form
determine only a truncation of the corresponding Macaulay dual generator
$\Lambda$. Recovering an apolar algebra is therefore a completion problem for
a partially known linear functional.

The effective reconstruction of finite algebras from such data has a parallel
classical development. Once a basis is chosen, a zero-dimensional quotient
algebra is encoded by the commuting matrices of multiplication by its
generators; their joint spectral data recover the support and local structure
of the corresponding scheme \cite{MollerStetter1995,NormalForm}. Border-basis
methods turn the consistency of a proposed basis into commutation equations
\cite{NormalForm,tango}, while flat-extension theory determines when truncated
moments extend to a finite-rank Hankel operator \cite{CurtoFialkow1996}.
These ideas became basic tools in algorithms for polynomial systems and
symmetric tensors
\cite{ComonGolubLimMourrain,Bernard,Bernardi201351,MourrainFOCM}. In
particular, the cactus algorithm of \cite{Alessandra} chooses a staircase
basis, introduces the missing higher moments, and imposes the nondegeneracy of
a Hankel block together with the commutation of the resulting multiplication
matrices. We refer to this procedure as \emph{degree extension}.

The second point of view starts from the multiplication tensor itself. Structure
tensors of algebras are fundamental objects in algebraic complexity, where
tensor restriction formalizes the simulation of one bilinear or multilinear
map by another
\cite{MR3729273,derksen_et_al:LIPIcs.CCC.2022.9}. The centroid is a classical
invariant of an algebra. Its tensor analogue consists of tuples of
endomorphisms whose actions through the different tensor factors agree. In
tensor-isomorphism and decomposition problems, it provides a computable model
for hidden algebraic structure, detecting direct products and symmetries
\cite{Wilson,Brooksbank_Maglione_Wilson}. For the multiplication tensor of a
finite algebra, the centroid recovers the algebra itself. More recently, the
same compatible endomorphism algebra, called the $111$-algebra in
\cite{MinBrk}, has become a central invariant in the study of concise tensors
of minimal border rank. Under the conciseness and cyclicity hypotheses relevant
here, it characterizes tensors arising from multiplication in a finite
algebra.

In the Artinian Gorenstein setting, a dual generator $\Lambda\in A^*$ turns
the iterated multiplication into the symmetric tensor
\[
\mu_{A,s}^{(d-1)}\in\Sym^d(A^*),
\qquad
\mu_{A,s}^{(d-1)}(a_1,\ldots,a_d)
=
\Lambda(a_1\cdots a_d).
\]
In the apolar setting, this tensor is directly related to the input form:
restricting it to the subspace determined by the original generators recovers
$F$. We make this restriction picture precise in \Cref{Section4}; it explains
why multiplication tensors are the natural ambient objects for the completion
problem considered here.

Thus the two traditions encode the same hidden object but organize its unknown
data differently: the Hankel approach extends $\Lambda$ in degree, whereas
the multiplication-tensor approach enlarges the ambient vector space.
Thus the two traditions encode the same hidden object but organize its unknown
data differently: the Hankel approach extends $\Lambda$ in degree, whereas
the multiplication-tensor approach enlarges the ambient vector space. We show
that this difference is only a choice of coordinates, provided that the
original variables are retained as \emph{marked generators}. The outcome is
both structural and computational.

\medskip
\noindent\textbf{Main results.}
\begin{enumerate}[label=\textup{(\roman*)},leftmargin=*]

%\item
%\emph{Restriction by multiplication tensors.}
%Let $F\in\Sym^d(V)$ be concise and let $r\geq\dim V$. If
%\[
%\operatorname{crk}(F)\leq r,
%\]
%then $F$ is a linear restriction of the symmetric multiplication tensor of a
%length-$r$ Artinian Gorenstein algebra. This tensor is concise and has cactus
%rank exactly $r$; see
%\Cref{Thm: Restriction,Prop: MultTensorExtendsf}. In the reduced case
%$A=\K^r$, this recovers the familiar fact that Waring expressions are
%restrictions of the diagonal tensor. The result places cactus rank in the same
%restriction paradigm that underlies the use of multiplication tensors in
%algebraic complexity.

\item
\emph{Equality of the two symbolic completions.}
Fix a complete staircase basis $\{1,x_1,\ldots,x_n,b_{n+1},\ldots,b_{r-1}\}$
of the unknown algebra. In the Hankel construction, the unknowns are higher
moments $h_\alpha=\Lambda(x^\alpha)$; in the multiplication-tensor
construction, they are coefficients $\lambda_m$ involving the additional
basis directions. We give a canonical identification between the parameters
that enter the multiplication matrices and prove that, under this
identification,
\[
M_{x_i}(h)=\widetilde M_{x_i}(\lambda),
\qquad i=1,\ldots,n.
\]
This is an equality before specialization: the known entries, the unknown
entries, and their positions coincide. Hence the two approaches yield the
same nondegeneracy and commutation equations, not merely isomorphic algebras
after solving them; see
\Cref{clm:identification-mult-matrices,thm: Summary}.

\item
\emph{Variable extension and marked generators.}
The preceding equality shows that degree extension is a particular coordinate
realization of a more intrinsic problem: complete $F$ to a multiplication
tensor in a larger space while recording the original variables through a
marking $\iota:V^*\hookrightarrow A$.
We call this a variable-extension completion with marked generators; see
\Cref{def:MT-CMG}. The marking is essential after nonlinear changes of
generators: a convenient generating set for the abstract algebra need not
coincide with the variables of the input form.

\item
\emph{A smaller search space for bases.}
The completion problem is intrinsic, but its computation requires a basis. In
characteristic zero, linear changes of coordinates allow one to reduce the
search from arbitrary staircases to Borel-fixed ones. Allowing suitable
nonlinear changes leads to the smaller class of \emph{squat ideals}. We prove
that the closure of the isomorphism locus of every zero-dimensional algebra
contains a squat ideal. Consequently, a general choice of generators admits a
basis indexed by a squat staircase; see
\Cref{ref:nonlinearBaseChange:cor}.

This can drastically reduce the combinatorial search. For instance, in three
variables and length $16$, the numbers of all, Borel, and squat staircases are
respectively
\[
11297,\qquad 143,\qquad 4.
\]
We use these bases to formulate the variable-extension cactus algorithm in
\Cref{alg:cactus-squat-variable}.
\end{enumerate}

The reduction from arbitrary staircases to Borel-fixed staircases gives a
uniform improvement at the level of the combinatorial search: the individual
degree extension systems are of the same type, while the number of candidate
basis shapes is reduced. In contrast, the further reduction from Borel-fixed
to squat staircases is not claimed to give a uniform complexity improvement.
It may greatly reduce the number of bases to be tested, but the nonlinear
generators can increase the degrees of the basis elements in the original
coordinates and make the commutator systems more involved. It
does, however, expose a concrete trade-off between testing many simple bases
and testing fewer, more involved ones. Shrinking the combinatorial search can
make additional cases computationally accessible. The corresponding
computational regimes are described in \Cref{rmk: MomentsVsDegrees}. The
centroid viewpoint also yields a direct criterion for recognizing forms of
minimal cactus rank; see \Cref{alg:minimalcactus}.

Conceptually, the paper develops constructive consequences of the structural
theory of multiplication tensors in \cite{MinBrk}. Algorithmically, it
explains why the Hankel equations of \cite{Alessandra} are precisely
multiplication-tensor equations and reveals the additional freedom supplied
by variable extension, markings, and nonlinear choices of basis.

\medskip
\noindent\textbf{Organization of the paper.}
\Cref{sec:prelim} fixes the notation and recalls the necessary background on
symmetric tensors, apolarity, inverse systems, and cactus rank.
\Cref{Section2} reviews multiplication tensors of Artinian Gorenstein algebras
and their characterization through the centroid.
\Cref{Section:The cactus algorithm} recalls the Hankel-based cactus algorithm
and its degree-extension formulation.
In \Cref{Section4}, we relate apolar schemes to restrictions of multiplication
tensors and introduce completions with marked generators.
\Cref{Sec:Recovering the cactus algorithm} proves the canonical identification
between the Hankel moments and the coefficients of the tensor completion, and
shows that the corresponding symbolic multiplication matrices coincide.
\Cref{Section:Scheme structure from multiplication matrices} explains how the
support and the nonreduced structure of the apolar scheme can be recovered from
these matrices.
\Cref{Section 6} studies the choice of bases, first through staircase and
Borel-fixed ideals and then through squat ideals and nonlinear changes of
generators.
Finally, \Cref{Section8} combines these ingredients into a variable-extension
version of the cactus algorithm and discusses the resulting computational
trade-off.
\def\previousIntro{
The study of decompositions of homogeneous polynomials lies at the intersection
of algebraic geometry, commutative algebra, and tensor theory. Classical Waring
rank asks for a decomposition of a form as a sum of powers of linear forms.
Cactus rank is its scheme-theoretic analogue: the cactus rank of a form
$F\in \operatorname{Sym}^d(V)$ is the minimal length of a zero-dimensional
scheme $Z\subset \mathbb P V$ such that $F$ belongs to the linear span of
the Veronese image of $Z$. Thus cactus rank allows nonreduced schemes which are described by finite local algebras, Hilbert functions, and
Macaulay inverse systems; see for instance \cite{BR13, Bernardi2018}. 

Apolarity  provides the basic bridge between forms and schemes. Given a form
$F$, a finite scheme $Z\subset \mathbb P V$ is apolar to $F$ if
$I(Z)\subseteq \operatorname{Ann}(F)$. When the corresponding finite algebra
is Gorenstein, this condition can be expressed by a single dual generator
$\Lambda$, or inverse system. After choosing an affine chart and
dehomogenizing $F$, the computational problem becomes the following: extend
the truncated functional determined by $F$ to a full moment functional
$\Lambda$ whose Hankel operator has finite rank and whose quotient algebra
has certain length.

This point of view is the basis of the moment-matrix and Hankel
flat-extension methods developed in tensor decomposition and polynomial systems, notably in the work of \cite{Bernard}  and in the cactus-rank algorithm of \cite{Alessandra} (cf. also \cite{MR3250539,MourrainFOCM}). In this approach one introduces unknown higher
moments, builds truncated Hankel matrices, and imposes the commutation of the multiplication matrices. Once an extension is found, these commuting matrices reveal  the apolar scheme.

However, there is another natural way to encode the same apolar scheme.
The multiplication of an Artinian algebra $A$ is itself a tensor. In the
Gorenstein case, if $\Lambda\in A^*$ is a dual generator, the iterated
multiplication map gives a symmetric tensor
\[
\mu^{(d-1)}_{A,s}\in \operatorname{Sym}^d(A^*),
\qquad
\mu^{(d-1)}_{A,s}(a_1,\ldots,a_d)=\Lambda(a_1\cdots a_d).
\]
Recent work on structure tensors, centroids, and multiplication tensors of finite algebras \cite{MinBrk,Joachim,Jelisiejew_Bedlewo} shows that one can read much of the algebra structure directly from the tensor. In particular, the centroid gives criteria for deciding when a tensor is induced by multiplication in a finite algebra, and connects this tensor description with the algebraic properties of $A$.

The aim of this paper is to make precise the equivalence between these two
languages in the context of the cactus algorithm. Starting from a form $F$,
we show that an apolar scheme of length $r$ gives rise to a length-$r$
Artinian Gorenstein algebra whose symmetric iterated multiplication tensor
restricts to $F$. Conversely, a suitable multiplication-tensor completion of
$F$, equipped with marked generators corresponding to the original variables,
determines an apolar algebra and hence a cactus scheme. Thus cactus
computation may be formulated either as a Hankel flat-extension problem or as
a multiplication-tensor completion problem.

In particular, fix a staircase basis $B=\{1,x_1,\ldots,x_n,b_{n+1},\ldots,b_{r-1}\}$
for the unknown algebra. In the classical degree-extension algorithm (\cite{Bernard, Bernardi201351,MR3250539,Alessandra}), one
extends the dehomogenized form by introducing higher moments
$\Lambda(x^\alpha)$ of degree $>d$. In the multiplication-tensor
formulation, one instead extends the ambient vector space by variables dual to
the additional basis elements $b_{n+1},\ldots,b_{r-1}$, and leaves the
corresponding coefficients of the tensor undetermined. We prove that these two
sets of unknowns are canonically identified. Under this identification, the
symbolic multiplication matrices obtained from the Hankel operators are the
same as those recovered from the completed multiplication tensor. Therefore
the commutation equations in the Hankel algorithm are precisely the equations
saying that the completed tensor is the multiplication tensor of a
commutative Artinian Gorenstein algebra with the prescribed marked generators.

This equivalence is useful beyond a formal comparison. It shows that the usual degree-extension algorithm is a particular instance of a more general completion problem. Instead of only adding higher moments in the original coordinates, one may enlarge the ambient vector space by adding variables corresponding to further basis elements of the unknown algebra. In this sense, the cactus algorithm can be viewed as a variable-extension problem for multiplication tensors with marked generators. The marking is essential: after a nonlinear change of coordinates, the generators used to put the algebra in a convenient basis need not coincide with the original variables of the form.

This leads to the final part of the paper where we discuss the choice of the bases for the unknown algebra by using the so-called ``squat staircases", a class of monomials arising from nonlinear changes of coordinates and adapted to zero-dimensional algebras (cf. \cite{Alberelli20191, Bertone2013263}). We show that, after passing to a suitable degeneration and applying a general nonlinear change of variables, one may work with a basis indexed by a squat staircase. In the completion problem the original variables must be transported through this change of coordinates: the matching equations with the input form are
\[
f_\alpha=\Lambda(u^\alpha),
\]
where the $u_i$'s are the transported marked generators, and not simply $\Lambda(v^\alpha)$ in the new squat generators. This distinction is the key point that makes the variable-extension algorithm compatible with the original form $F$.

The resulting algorithm keeps the moment conditions of the Hankel method but uses the flexibility of the multiplication-tensor viewpoint. One computes Hankel matrices for the pulled back functional in a squat basis, keeps all entries expressed in the original moment variables, imposes commutation of the multiplication matrices, and adds the compatibility equations relating the squat generators to the transported marked generators. When a solution is found, the same multiplication matrices recover the apolar scheme by the standard eigenvector, generalized eigenspace, or border-rewriting procedures.

\medskip

The paper is organized as follows.
\Cref{sec:prelim} recalls apolarity, divided powers, inverse systems, and cactus rank.
\Cref{Section2} reviews multiplication tensors of Artinian Gorenstein algebras and
the centroid viewpoint. \Cref{Section:The cactus algorithm} recalls the cactus algorithm via Hankel
degree extension. \Cref{Section4} explains how apolarity can be expressed through
restrictions of multiplication tensors. \Cref{Sec:Recovering the cactus algorithm} proves the equivalence
between the Hankel and multiplication-tensor constructions by identifying
their parameters and multiplication matrices. \Cref{Section:Scheme structure from multiplication matrices} recalls how to
reconstruct the apolar scheme from the resulting multiplication matrices.
\Cref{Section 6} discusses the different kind of basis: staircase and squat. Then a nonlinear change of coordinates is described via marked generators. \Cref{Section8} combines these ingredients into a variable-extension version of the cactus algorithm in a squat basis.
}

\section*{Acknowledgements}
We are grateful to  %Joachim Jelisiejew and 
%Jakub Jagiełła for the insightful conversations that started this work, and to  %J. Jelisiejew,
Mateusz Michałek and all the organizers and participants of the Oberwolfach Graduate Seminar: "Modern Algebraic Geometry in Algebraic Combinatorics and Tensors". We thank Jarosław Buczyński, Maciej Gałązka, Bernard Mourrain and Daniele Taufer for all the discussions that benefited this project. We also thank the Simons Institute for having provided a fruitful environment during the Program "Complexity and Linear Algebra".

\medskip

This work has been supported by European Union’s HORIZON–MSCA-2023-DN-JD programme under the Horizon Europe (HORIZON) Marie Skłodowska-Curie Actions, grant agreement 101120296 (TENORS). We acknowledge the TensorDec Laboratory of the Department of Mathematics of the University of Trento, of which AB and ORF are currently members, for helpful discussions.
AB is member of GNSAGA (INdAM). 
AB has been partially funded by the European Union under NextGeneration EU. PRIN 2022 Prot. n. 2022ZRRL4C 004. Views and opinions expressed are however those of the authors only and do not necessarily reflect those of the European Union or European Commission. Neither the European Union nor the granting authority can be held responsible for them. JJ is supported by National Science Centre grant 2023/50/E/ST1/0033.

\section{Preliminaries and Background}\label{sec:prelim}

\medskip

    We fix $\mathbb{K}$ an algebraically closed field of characteristic zero. Let $V_1,\dots,V_d$ be finite–dimensional $\K$–vector spaces and let $
T \in V_1 \otimes \cdots \otimes V_d.
$ For each $i=1,\dots,d$ we consider the $i$-th \emph{flattening} of $T$, namely the linear map
\[
\mathrm{Flat}_i(T) : V_i^* \longrightarrow 
V_1 \otimes \cdots \otimes \widehat{V_i} \otimes \cdots \otimes V_d,
\]
obtained by contraction of $T$ with a covector in $V_i^*$ on the $i$-th factor.
We say that $T$ is \emph{concise} if for all $i$'s the map $\mathrm{Flat}_i(T)$ is injective.

\medskip

By considering all flattenings, we can see $T$ as a $\K$-multilinear map
\begin{equation}\label{eq: TensorAsMultMap}
    T:V_1^*\times \cdots \times V_d^*\to \K.
\end{equation} 
If $V_1=\cdots =V_d$, we say that $T$ is \emph{symmetric} if $T(v_1,\ldots,v_d)=T(v_{\sigma(1)},\ldots, v_{\sigma(d)})$ for every permutation $\sigma\in \mathcal{S}_d$. We denote by $\Sym^d(V)$ the space of symmetric tensors of order $d$. Once a basis of $V$ has been fixed, we can identify a symmetric tensor $F$ of order $d$ with a homogeneous polynomial of degree $d$. We say that such an $F$ is \emph{concise} if it does not belong to 
 $\Sym^d(W)$ for any proper subspace $W \subsetneq V$.
 Equivalently, $F$ is concise if it depends essentially on all $n+1$ variables, i.e. $F$ cannot be written using fewer variables using a linear transformation on its original variables. 
 
 Moreover, given $\Phi:W\to V$ a linear map we will still denote by $\Phi$ the induced map on the respective symmetric algebras, defined at the degree $d$ subspace as
\[ \Phi\big(w_1\cdots w_d\big)\ :=\ \Phi(w_1)\cdots \Phi(w_d),
 \quad w_1,\dots,w_d\in W,
\]
and extended by linearity.

% Let $W,V$ be finite-dimensional $k$-vector spaces and let $\Phi:W\to V$ be a linear map.
% For every $d\ge 0$ we denote by
% \[
% \Sym^d(\Phi):\Sym^d(W)\longrightarrow \Sym^d(V)
% \]
% the linear map induced on the $d$-th symmetric power\jjcomment{This notation seems very formal and not very much seem in nature(?), perhaps make it lighter, e.g., write $\Phi$ instead of $\Sym^d(\Phi)$?}, defined on decomposable tensors by
% \[
% \Sym^d(\Phi)\big(w_1\cdots w_d\big)\ :=\ \Phi(w_1)\cdots \Phi(w_d),
% \quad w_1,\dots,w_d\in W,
% \]
% and extended by linearity.
% We also write
% \[
% \Sym(\Phi):\Sym(W)=\bigoplus_{d\ge 0}\Sym^d(W)\longrightarrow
% \Sym(V)=\bigoplus_{d\ge 0}\Sym^d(V)
% \]
% for the induced homomorphism of graded $k$-algebras, whose degree-$d$ component is $\Sym^d(\Phi)$.

%Via the usual identifications between symmetric tensors and homogeneous forms (or divided powers), $\Sym^d(\Phi)$ corresponds to linear substitution of variables.

In addition, we say that two tensors $T_1\in V_1\otimes \cdots \otimes V_d$, $T_2\in W_1\otimes \cdots \otimes W_d$ are \emph{isomorphic} if there are $d$ isomorphisms $\phi_i:V_i\to W_i$, $i=1, \ldots, d$, such that $T_2$ is the image of $T_1$ under the induced map; i.e.\[
T_2 = (\phi_1 \otimes \cdots \otimes \phi_d)(T_1).
\]
In the symmetric case $V_1=\cdots=V_d=V$ and $W_1=\cdots=W_d=W$, the notion of isomorphisms corresponds to a linear change of coordinates on the variables of the associated homogeneous polynomials.

\medskip

We now fix $V$ to be a $\mathbb{K}$-vector space of dimension $n+1$. \\Denote by $S := \bigoplus_{d\geq 0} \Sym^d(V^*)\cong 
    \mathbb{K}[x_0, \ldots, x_n]$ the symmetric algebra of $V^*$, identified with the homogeneous coordinate ring of projective space $\mathbb{P}^n\cong \mathbb PV=\operatorname{Proj}(S)$. Let also $R := \mathbb{K}[x_1, \ldots, x_n]$ be the coordinate ring of the standard affine chart $\{x_0 \neq 0\} \subset \mathbb{P}^n$.

\medskip
Let $\alpha=(\alpha_0,\ldots,\alpha_n)\in \mathbb N^{n+1}$ be a multi-index and denote $x^\alpha=x_0^{\alpha_0}\cdots x_n^{\alpha_n}\in S$.
Consider the monomial basis $\{x^\alpha\}_{\alpha\in \mathbb N^{n+1}}$ of $S$ and its dual basis $\{Y^{(\alpha)}\}_{\alpha\in \mathbb N^{n+1}}$ in $S^*:= \mathrm{Hom}_{\mathbb{K}}(S, \mathbb{K})$, defined by $Y^{(\alpha)}(x^\beta)=\delta(\alpha, \beta)$. For every graded piece $S_d$, its dual $S_d^*$ can be identified with the linear span of $\{Y^{(\alpha)}\}_{|\alpha|=d}$. We therefore consider the \emph{ring of divided powers}
\[\K_{\mathrm{dp}}[Y_0,\ldots,Y_n]:= \bigoplus_{d \geq 0} \mathrm{Hom}_{\mathbb{K}}(S_d, \mathbb{K}),
\]
where multiplication is given by $Y^{(\alpha)}\cdot Y^{(\beta)}=\binom{\alpha+\beta}{\alpha}Y^{(\alpha + \beta)}$.

\medskip

Define the \emph{contraction} action of $S$ on $S^*$ as the dual operation to multiplication:
\[
(p \aprod \Lambda)(q) := \Lambda(pq), \quad p,q \in S,\; \Lambda \in S^*.
\]

Note that for the monomial basis and its dual, contraction acts as a scaled partial differentiation, that is, $x^\alpha\aprod Y^{(\beta)}=Y^{(\beta-\alpha)}$, where $\beta-\alpha=0$ if $\alpha_i>\beta_i$ for some $i$ (see \Cref{ex:contraction_divided_powers}). 

\begin{definition}
    Let $\Lambda \in S^*$. We define
    \[
    \operatorname{Ann}(\Lambda) := \{ p \in S \mid p \aprod \Lambda = 0 \},
    \]
    the \emph{annihilator} of $\Lambda$
    with respect to the contraction action. An ideal $I \subseteq S$ is \emph{apolar} to a homogeneous polynomial $F \in \K_{\mathrm{dp}}[Y_0, \ldots, Y_n]_d$ if $I \subseteq \operatorname{Ann}(F)$. A zero-dimensional scheme $Z\subseteq \mathbb PV$ is apolar to $F$ if $I(Z)$ is apolar to $F$.
\end{definition}

This naturally leads to considering quotients of the polynomial ring by annihilators of polynomials. The resulting algebra is a special instance of more general self-dual objects:
\begin{definition}
    Let $A$ be an Artinian $\K$-algebra. Its dual $A^*$ is an $A$-module via contraction. We say that $A$ is \emph{Gorenstein} if $A^*$ is isomorphic to $A$ as an $A$-module, i.e., it is generated by a single element (called the \emph{dual generator}).
\end{definition}

\begin{example}\label{ex:contraction_divided_powers}
To make the above constructions explicit, let us consider the case $n=1$, so that
$S = \mathbb{K}[x_0,x_1]$ and 
\[
\mathbb{K}_{dp}[Y_0,Y_1] = \bigoplus_{d\geq 0} \mathrm{Hom}_{\mathbb{K}}(S_d,\mathbb{K}).
\]
%For instance, the degree-$2$ piece $S_2$ has basis
%\[
%x_0^2,\; x_0x_1,\; x_1^2,
%\]
%and its dual $S_2^*$ has basis
%\[
%Y_0^{(2)},\; Y_0Y_1,\; Y_1^{(2)},
%\]
%characterized by
%\[
%Y_0^{(2)}(x_0^2)=1,\quad
%(Y_0Y_1)(x_0x_1)=1,\quad
%Y_1^{(2)}(x_1^2)=1,
%\]
%and all other pairings equal to $0$.
%
%The divided power multiplication satisfies, for example,
%\[
%Y_0^2 = 2\,Y_0^{(2)},\qquad
%Y_0Y_1 = Y^{(1,1)},\qquad
%Y_1^2 = 2\,Y_1^{(2)},
%\]
%and more generally $Y_0^d = d!\,Y_0^{(d)}$, $Y_1^d = d!\,Y_1^{(d)}$.
%
%Now 
Take the element
\[
\Lambda := Y_0^{(2)} \in S_2^*.
\]
Using the contraction rule $x^\alpha \aprod Y^{(\beta)} = Y^{(\beta-\alpha)}$ (with the convention that 
$Y^{(\gamma)}=0$ if some $\gamma_i<0$), we obtain
\[
x_0 \aprod \Lambda =  Y_0,\qquad
x_0^2 \aprod \Lambda = 1,\qquad
x_0^3 \aprod \Lambda = 0,
\]
and
\[
x_1 \aprod \Lambda = 0,\qquad
x_0^i x_1^j \aprod \Lambda = 0 \quad\text{as soon as } j>0 \text{ or } i\ge 3.
\]
Hence, 
\[\operatorname{Ann}(\Lambda)=(x_1,x_0^3) \subset S,\]
and the Artinian algebra 
\[\K[x_0]/(x_0^3)\]
defines a zero dimensional scheme $Z$ of length 3 supported at the point $[0:1]\in \mathbb P^1$. Since $I(Z)=(x_0^3)\subset \operatorname{Ann}(\Lambda)$, $Z$ is apolar to $\Lambda$.
\end{example}

\medskip

\begin{remark}\label{rmk: DividedPowersvsDiff}
Suppose that the characteristic is zero or larger than $d$.
Then the canonical isomorphism between $(V\otimes \cdots \otimes V)^*\cong V^*\otimes\cdots \otimes V^*$ yields an isomorphism $S_d^*=(\Sym^d V^*)^*$ and $\Sym^d V$. In characteristic zero, the ring $\K_{\mathrm{dp}}[Y_1,\ldots,Y_n]$ is isomorphic to $T:=\K[Y_1,\ldots,Y_n]$ via the identity
\begin{equation}\label{eq: DivPowerConversion}
    Y^{(\alpha)}=\frac{1}{\alpha!}Y_0^{\alpha_0}\cdots Y_n^{\alpha_n}.
\end{equation}
 Moreover, if we define an action of $S$ on $T$, such that $p\in S$ is identified with a differential operator obtained by substituting for all $i$ the variable $x_i$ in $p$ by $\partial/\partial x_i$, the same ring isomorphism given by \eqref{eq: DivPowerConversion} is an $S$-module isomorphism (see e.g. \cite{Geramita}). Thus, if we consider $F\in k[x_0,\ldots,x_n]_d$, i.e., as a usual homogeneous polynomial, then $\operatorname{Ann}(F)$ is the set of polynomials, which, viewed as differential operators on $k[x_0,\ldots,x_n]$, annihilate $F$. 
\end{remark}

%From time to time we will need the language of tensors. 
% Let 
% $S := \Sym(V^*) \simeq K[x_0,\dots,x_n]$ and $S_d \simeq Sym^d V^*$.
% A \emph{symmetric tensor} of order $d$ is an element
% $
% F \in \Sym^d V^* \simeq S_d,
% $
% which we also view as a homogeneous polynomial $F(x_0,\dots,x_n)$ of degree $d$.

% Later, once we introduce the apolar ideal $\mathrm{Ann}(F)\subset K_{dp}[Y_0,\dots,Y_n]$, this will be equivalent to the condition $
% \mathrm{Ann}(F)_1 = \{0\}$, 
% namely the absence of nonzero linear forms that annihilate $F$ in the apolar action.

\section{Structure tensors of Artinian Gorenstein algebras}\label{Section2}

In this section we fix notation and recall the basic facts on Artinian Gorenstein algebras and their structure tensors. 
Our guiding point of view is that the multiplication in a finite-dimensional $\K$-algebra $A$ of length $r$ can be encoded by a tensor (equivalently, by the family of multiplication matrices with respect to a chosen basis). 
In the Artinian Gorenstein case this representation is tightly linked to Macaulay inverse systems, and it provides the algebraic bridge between a polynomial (or tensor) and the scheme-theoretic data that will be exploited in the algorithms of the following sections.

\begin{definition}
    Let $A$ be an Artinian algebra. For $d\geq 2$, we define the $d$-th iterated multiplication tensor of $A$ as the multilinear map
\begin{equation}\label{eq:MultTensorAsMap}
    \mu_A^{(d)}:\underbrace{A\times \dots\times A}_{d \text{ times}}\to A \quad \quad (a_1,\ldots, a_{d})\mapsto a_1
\cdots a_d
\end{equation}
that is, $\mu_A^{(d)}\in A^*\otimes\cdots \otimes A^*\otimes A$ is an order-$(d+1)$ tensor.     
\end{definition}

\begin{remark}
    For every $d\geq 2$, the tensor $\mu_A^{(d)}$ is concise, since for all $r\in A$, $\mu_A^{(d)}(1,\ldots,1,r)=r$, so the map $\mu_A^{(d)}$ is surjective as the multilinear map \eqref{eq:MultTensorAsMap}. 
\end{remark}

\begin{example}\label{Ex: MultTensAG}
    Let $A=\K[x]/(x^3)$, which is Artinian  Gorenstein of length $3$, with $\K$-basis $\{1,x,x^2\}$ and dual generator $Y^{(2)}$.

    The structure tensor of order 2 corresponds to the multiplication table 
\[
%\resizebox{1.5}{!}{$
\begin{array}{r|ccc}
& 1 & x & x^2\\
\hline 
1 & 1   & x & x^2 \\
x & x & x^2 & 0 \\
x^2 & x^2 & 0 & 0\\
\end{array}\]
That is, to the tensor 
\[
\mu_A^{(2)}=1^*\otimes 1^*\otimes 1 + 1^*\otimes x^*\otimes x +1^*\otimes (x^2)^*\otimes x^2 + x^*\otimes 1^*\otimes x + x^*\otimes x^*\otimes x^2 + (x^2)^*\otimes 1^*\otimes x^2%\in A^*\otimes A^*\otimes A
\]
    Consider the isomorphism $A\cong A^*$ which sends $1\mapsto 1\aprod Y^{(2)}=(x^2)^*$, $x\mapsto x^*$, $x^2\mapsto 1^*$. Via this isomorphism, this tensor is transformed into 
\[\mu_{A,s}^{(2)}=1^*\otimes 1^*\otimes (x^2)^* + 1^*\otimes x^*\otimes x^* +1^*\otimes (x^2)^*\otimes 1^* + x^*\otimes 1^*\otimes x^* + x^*\otimes x^*\otimes 1^* + (x^2)^*\otimes 1^*\otimes 1^*%\in A\otimes A\otimes A
,\]
which is the symmetric tensor identified with the divided power polynomial $G=zy^{(2 )} + z^{(2)} x$. 
%Note that $G$ has non vanishing Hessian.
\end{example}

In recent works (cf. \cite{MinBrk, NewPaperJoachim,Jelisiejew_Bedlewo, CosimoWeronikaTim}), the notion of \emph{centroid} %(or $\mathcal{A}_{111}^T$ algebra) 
of a tensor $T$ has been developed. Given a tensor $T\in V_1\otimes\cdots\otimes V_d$, for every $i= 1, \ldots , d$, there is an action of $\operatorname{End}(V_i)$ on $T$, viewed as the action $\operatorname{Id}_{V_1}\times \cdots \times \operatorname{Id}_{V_{i-1}}\times \operatorname{End}(V_i)\times \operatorname{Id}_{V_{i+1}}\times \cdots \times \operatorname{Id}_{V_d}$
%$GL(V_1)\times \cdots \times GL(V_d)$ 
in $T$, which is classically called \emph{multilinear multiplication} and in more modern language coming from Quantum Information Theory (cf. eg. \cite{derksen_et_al:LIPIcs.CCC.2022.9}) it is also called \emph{restriction}: it is said that $T$ restricts to $S$, and is denoted $S \le T$, if $S = (A_1\otimes\cdots\otimes A_d)T$ for some linear maps $A_i$, $i=1, \ldots, d$.

For $X\in \operatorname{End}(V_i)$, we will denote this action by $X\circ_i T$.

\begin{definition}
    Let $T\in V_1\otimes\cdots\otimes V_d$. The centroid $\operatorname{Cen}_T$ %, or 111-algebra of $T$,
    is the set of tuples $(X_1,\ldots,X_d)\in \operatorname{End}(V_1)\times \cdots \times \operatorname{End}(V_d)$ such that 
    \[X_1 \circ_1 T=\cdots = X_d \circ_d T.\]
\end{definition}

In the language of tensor restriction (see e.g. \cite{derksen_et_al:LIPIcs.CCC.2022.9}),
the centroid $\mathrm{Cen}_T$ consists of tuples of local operators whose actions on the
different tensor factors produce the same resulting tensor. It also coincides with
the \emph{$111$-algebra} of $T$ of~\cite[Def.~1.10]{MinBrk}.

\begin{remark}
    As shown in \cite[Theorem 1.11]{MinBrk}, if $T$ is concise then $\operatorname{Cen}_T$ is a commutative algebra. Moreover, for all $i=1, \ldots, d$, an element $(X_1,\ldots,X_n)\in \operatorname{Cen}_T$ acts on $V_i$ and $V_i^*$ simply by the action of $X_i$. This action induces a structure of $\operatorname{Cen}_T$-module on $V_i$ and $V_i^*$ (see \cite[$\S$5]{MinBrk}), which makes the $\K$-multilinear map \eqref{eq: TensorAsMultMap} a $\operatorname{Cen}_T$-multilinear map, that is, for all $r\in \operatorname{Cen}_T$ and every $\omega_i\in V_i^*$, $i\in 1,\ldots,d$, $T(r\circ \omega_1,\ldots,\omega_d)=\cdots=T(\omega_1,\ldots,r\circ\omega_d)$.
\end{remark}

\begin{remark}\label{rmk: CentroidIsoToAlgebra}
    For example, identifying an element $r\in A$ as the endomorphism $r:A\to A$ of multiplication by $r$, and the contraction by $r$ map $r^T:A^*\to A^*$, the assignment $r\mapsto (r^T,\ldots, r^T, r)$ gives an isomorphism $A\cong \operatorname{Cen}_{\mu_A^{(d)}}$. 

\end{remark}
\begin{definition}
    Let $T\in V_1\otimes\cdots\otimes V_d$ be a concise tensor with $\dim V_i=m$ for all $i=1, \ldots, d$. We say that $T$ is centroid abundant if $\dim_\K \operatorname{Cen}_T\geq m$.
\end{definition}

\begin{remark}\label{Rmk: CharactMultTensors}
    Multiplication tensors have been characterized in \cite[Theorem 5.5]{MinBrk}. Namely, a concise tensor $T\in V_1\otimes\cdots\otimes V_d$ with $\dim V_i=m$ for all $i=1, \ldots,d$, is isomorphic to a multiplication tensor of a finite algebra if and only if it is centroid abundant and $V_i^*$ is a cyclic $\operatorname{Cen}_T$-module for every $i=1,2,\ldots, d-1$. In this case, $T\cong \mu_{\operatorname{Cen}_T}^{(d-1)}$ (cf. \cite{Jelisiejew_Bedlewo}).
\end{remark}

\begin{remark}[{\cite{Jelisiejew_Bedlewo}}]\label{Rmk: SymmetricMultTensor}
    In the symmetric setting, if $F\in \K_{\mathrm{dp}}[x_0,\ldots,n]_d$ is a concise homogeneous polynomial then:
    \begin{enumerate}
        \item\label{Rmk:SymmetricMultTensor:1} The dimension of the centroid of $F$ (i.e., centroid abundance condition) can be computed as
        \begin{equation}\label{eq: dimCentroid}
            \dim_\K \operatorname{Cen}_F= \left(\frac{\K[x_0,\ldots, x_n]}{(\operatorname{Ann}_{\leq d-1})} \right)_d=    
        \end{equation}
        
        \[\dim_\K \{G\in S_d^* \; | \; \forall i \; x_i\aprod G\in \langle (x_j\aprod F)_{j=0,\ldots,n}\rangle\}.\]
        Here the subspace $\langle (x_j\aprod F)_{j=0,\ldots,n}\rangle \subset S_{d-1}^*$ is the image 
        of the so-called \emph{first catalecticant}:
        $C_F^{1,d-1}\colon S_1 \to S_{d-1}^*,\quad p\mapsto p\aprod F$, so the condition above is equivalent to
        \[
        \mathrm{Im}\, C_G^{1,d-1} \subset \mathrm{Im}\, C_F^{1,d-1}.
        \]
      
     \item\label{Rmk:SymmetricMultTensor:2} Using \cite[Theorem 5.5]{MinBrk}, one can show that if $F$ is centroid abundant, then $V^*$ is a cyclic $\operatorname{Cen}_F$-module if and only if $F$ has non-vanishing Hessian, which is the determinant of the matrix
     \[\left( x_ix_j\aprod F\right)_{i,j=0,\ldots,n}.\]
                                                            
    \item\label{Rmk:SymmetricMultTensor:3} Assume $F\in \Sym^dV$ is a multiplication tensor. We say $F$ is a \emph{direct sum} if $V=V_1\oplus V_2$ and $F\in \Sym(V_1)\oplus \Sym(V_2)$, i.e., for some linear change of coordinates $F$ can be expressed as a sum of two polynomials using a disjoint set of variables. Then $F$ is a direct sum if and only if the centroid is not a local algebra~\cite{Wilson, Brooksbank_Maglione_Wilson, NewPaperJoachim}.

    \item \label{Rmk:SymmetricMultTensor:4} Let $A$ be an Artinian Gorenstein algebra with dual generator $\Lambda\in A^*$. The isomorphism $A\cong A^*$ induces an isomorphism of tensors $\mu_{A}^{(d)}\cong \mu_{A, s}^{(d)}\in (A^{*})^{\otimes d+1}$ such that $\mu_{A, s}^{(d)}(a_1,\ldots,a_d)=(a_1\cdots a_d)\aprod \Lambda$. Thus, as a multilinear map \eqref{eq: TensorAsMultMap}, $\mu_{A,s}^{(d)}$ is defined by 
    \begin{equation}\label{eq: MultTensorOfAG}
        \mu_{A,s}^{(d)}(a_1,\ldots,a_d,a_{d+1})=\Lambda(a_1\cdots a_d\cdot a_{d+1}).
    \end{equation}
    From this expression we see that $\mu_{A,s}^{(d)}$ is a symmetric tensor, that is, $\mu_{A,s}^{(d)}\in \Sym(A^*)$. Note also that the contraction $p\aprod \Lambda$ corresponds to the flattening $\mu_{A,s}^{(d)}(p,1,\ldots,1,-)$, see~\cite{Jelisiejew_Bedlewo}. 
\end{enumerate}
\end{remark}

\begin{example}\label{ex:ArtinianGorensteinPart2}
    Let $\Lambda=Y^{(2)} + Y^{(2)}Z\in \K_{\mathrm{dp}}[Y,Z]$. The Hilbert function of the Artinian Gorenstein algebra $A=\K[y,z]/\operatorname{Ann}(\Lambda)=\K[y,z]/(y^3,z^2)$ is $(1,2,2,1)$, so its dimension is $6$.
    %, and let $\operatorname{Der}(\Lambda)=\{p\aprod \Lambda \; | \; p\in \K[y,z]\}$ denote the vector space of all its contractions. The annihilator of $\Lambda$ is the ideal $\operatorname{Ann}(\Lambda)=(y^3,z^2)\subseteq \K[y,z]$, and the inclusion $\operatorname{Der}(\Lambda)\subseteq \K[y,z]^*$ gives a natural isomorphism of vector spaces
    %$$\operatorname{Der}(\Lambda)\cong \left(\K[y,z]/\operatorname{Ann}(f)\right)^*,$$  which is in fact an isomorphism of $A$-modules, for $A=\K[y,z]/\operatorname{Ann}(\Lambda)$. In particular, $A$ is Gorenstein  and $A^*$ is generated by $\Lambda$. 
    %The Hilbert function of $A$ is 
    A possible basis of $A$ is $\{1,y,z,y^2,yz,y^2z\}$, and we will denote $\{a,b,c,d,e,f\}$ its dual basis. The tensor $\mu_{A,s}^{(2)}$ is a symmetric multilinear map $\Sym^3:A\to k$, i.e. an element in $(\Sym(A))^*$, defined by
    \[\mu_{A,s}^{(2)}(r_1,r_2,r_3)=(Y^{(2)} + Y^{(2)}Z)(r_1\cdot r_2\cdot r_3) \quad r_1,r_2,r_3\in A\]
    (cf. \eqref{eq: MultTensorOfAG}), 
    and we can write its representation in the divided power basis of the algebra as
    %\jjcomment{We probably should have more consistent notation}  
    \[\mu_{A,s}^{(2)}=ab^{(2)}+b^{(2)}c+a^{(2)}d+abe+a^{(2)}f.\]
    
   % One can check that the Hessian of this polynomial is $-a^6$, and it has its centroid has dimension $6$.
   % A direct computation shows that the apolar ideal of   $\mu_{A,s}^{(2)}$ in $\K[a,b,c,d,e,f]$ has the same truncation in degrees $\le 2$ as $\mathrm{Ann(f)}$, and that  $\dim_\K \mathrm{Cen}_{\mu_{A,s}^{(2)}}=6$.

%\oriol{I changed the $a_i$ to $r_i$, please let me know if that makes the notation clearer}
      Computing the Hessian of $\mu_{A,s}^{(2)}$ with respect to the 
      variables $a,b,c,d,e,f$ one finds
      \[
      \det\mathrm{Hess}(\mu_{A,s}^{(2)}) = -a^6 \neq 0,
      \]
      so $\mu_{A,s}^{(2)}$ has non-vanishing Hessian. Since using \eqref{eq: dimCentroid}, the centroid of $\mu_{A,s}^{(2)}$ has dimension 6, that is, $\dim_\K \mathrm{Cen}_{\mu_{A,s}^{(2)}}=\dim_\K A$, 
      \cite[Theorem~5.5]{MinBrk} implies that $A^*$ is a cyclic 
      $\mathrm{Cen}_{\mu_{A,s}^{(2)}}$-module. 
    Note that, since $A\cong \mathrm{Cen}_{\mu_{A,s}^{(2)}}$, this is precisely the definition of being Gorenstein.  
      
%       Explicitly, since $\mathrm{Cen}_{\mu_{A,s}^{(2)}}\cong A$ with the isomorphism given by multiplication and contraction maps (cf \Cref{rmk: CentroidIsoToAlgebra}), the action of the centroid on $A$ is just the contraction action of $A$ on $A^*$, $(p\cdot\varphi)(u)=\varphi(pu)$. A direct
% computation shows that 
%       %For instance, if we identify $\mathrm{Cen}_{\mu_{A,s}^{(2)}}\cong A$ and let
% %$A$ act on $A^*$ by $(p\cdot\varphi)(u)=\varphi(pu)$, a direct
% %computation shows that
% \[
% y\cdot \Lambda = e,\quad z\cdot f = d,\quad y^2\cdot f = c,\quad
% yz\cdot f = b,\quad y^2z\cdot f = a,\quad 1\cdot f = f
% \]
% so the orbit of $f$ under $\mathrm{Cen}_{\mu_{A,s}^{(2)}}$ contains the whole
% dual basis $\{a,b,c,d,e,f\}$. Hence $A^*$ is a cyclic
% $\mathrm{Cen}_{\mu_{A,s}^{(2)}}$-module. We note that since the Hilbert function is (1,2,2,1), $A$ is canonically graded and it is isomorphic to the apolar algebra of the highest degree term of $\Lambda$ (see \cite[Theorem 3.3]{Elias2012IsomorphismCO}), i.e., $A^*$ can be generated by $f$. 

Finally, the centroid $\mathrm{Cen}_{\mu_{A,s}^{(2)}}$ is isomorphic, as an
      algebra, to $A=\K[y,z]/(y^3,z^2)$, which is local with maximal ideal
      generated by the images of $y$ and $z$. By \Cref{Rmk:SymmetricMultTensor:3} of
      \Cref{Rmk: SymmetricMultTensor} it follows that 
      $\mu_{A,s}^{(2)}$ is not a direct sum: there is no decomposition of
      $A$ as a direct sum $V_1\oplus V_2$ such that $\mu_{A,s}^{(2)}$ splits
      as a sum of polynomials in disjoint sets of variables.
\end{example}

\begin{remark}\label{rmk: MultMatricesFromTensor}
   Let $A=\K[x_1,\ldots,x_n]/\operatorname{Ann}(\Lambda)$ be an Artinian Gorenstein algebra with basis $B=\{1,x_1,\ldots,x_n,b_{n+1},\ldots,b_{r-1}\}$. Let $\mu_{A,s}^{(d-1)}\in \K_{\mathrm{dp}}[1^*,x_1^*,\ldots,x_n^*,b_{n+1}^*,\ldots,b_{r-1}^*]_d$ be its multiplication tensor with respect to the dual basis of $B$. We can reconstruct the matrices $M_{x_i}$ of the map of multiplication by $x_i$ in $A$ from $\mu_{A,s}^{(d-1)}$, even without knowing $\Lambda$, as follows. First, since the set $\{b\aprod \Lambda \; | \; b\in B\}\subseteq A^*$ is linearly independent, each element $b_j^*\in B^*$ can be written as
   \begin{equation}\label{eq:ChangeOfDualBasis}
       b_j^*=\sum_l \gamma_l b_l\aprod \Lambda.
   \end{equation}
This expression corresponds to a linear endomorphism on $A^*$, whose inverse, which expresses each $b_j\aprod \Lambda$ as a linear combination of the dual basis of $B$, has matrix 
\[(\Lambda(bb'))_{b,b'\in B}=(\Lambda(1\cdots 1\cdot bb'))_{b,b'\in B}=\left((1^{d-2}\aprod \mu_{A,s}^{(d-1)})(bb')\right)_{bb'\in B}.\]
This corresponds to the symmetric matrix associated to the quadric $1^{d-2}\aprod \mu_{A,s}^{(d-1)}$, and it is the matrix $H_\Lambda^B$ in \Cref{Section:The cactus algorithm}). Next, we build the transpose of $M_{x_i}$, that is, the map of contraction by $x_i,$ using that for each $b_j^*\in B^*$ 
\begin{equation}\label{eq: MultMatrcesFromTensor}
(x_i\aprod (b_j\aprod\Lambda))(b_k)= (x_i\aprod \Lambda)(b_j\cdot b_k)=(1^{d-3}\cdot x_i)\aprod \Lambda(b_jb_k)=(1^{d-3}\cdot x_i)\aprod \mu_{A,s}^{(d-1)}(b_jb_k)    
\end{equation}
As before, this is represented by the matrix associated to the bilinear form $(1^{d-3}\cdot x_i)\aprod \mu_{A,s}^{(d-1)}(b_jb_k)$. To obtain the matrix $M_{x_i}^{\mathrm{tr}}$ in the basis dual to $B$, we need to apply the change of basis to $1^{d-3}\cdot x_i\aprod \mu_{A,s}^{(d-1)}$. Thus, we arrived at the following formula: 
\begin{equation}\label{eq:multMatricesFromTensor}
    M_{x_i}^{\mathrm{tr}}=M_{1^{d-3}\cdot x_i\aprod \mu_{A,s}^{(d-1)}} \cdot \left( M_{1^{d-2}\aprod \mu_{A,s}^{(d-1)}}\right)^{-1},
\end{equation}

where $M_{1^{d-2}\aprod \mu_{A,s}^{(d-1)}}$ and $M_{1^{d-3}x_i\aprod \mu_{A,s}^{(d-1)}}$ are the matrices associated to $1^{d-2}\aprod \mu_{A,s}^{(d-1)}$ and $1^{d-3}x_i\aprod \mu_{A,s}^{(d-1)}$, respectively.
\end{remark}

\begin{example}
    We continue with the previous \Cref{ex:ArtinianGorensteinPart2}. We have the tensor 
    \[\mu_{A,s}^{(2)}=a^{(2)}d + a^{(2)}f + ab^{(2)}+acd+abe+b^{(2)}c\]
       in the dual basis of $\{1,y,z,y^2,yz,y^2z\}$.
    We follow \Cref{rmk: MultMatricesFromTensor} to find the matrices corresponding to multiplication by the variables.
    \[1\aprod \mu_{A,s}^{(2)}=ad+af+b^{(2)}+cd+be \quad \sim\left(\!\begin{array}{cccccc}
      0&0&0&1&0&1\\
      0&1&0&0&1&0\\
      0&0&0&1&0&0\\
      1&0&1&0&0&0\\
      0&1&0&0&0&0\\
      1&0&0&0&0&0
      \end{array}\!\right)\]

   \[y\aprod \mu_{A,s}^{(2)}=ab+ae+bc \quad \sim\left(\!\begin{array}{cccccc}
      0&1&0&0&1&0\\
      1&0&1&0&0&0\\
      0&1&0&0&0&0\\
      0&0&0&0&0&0\\
      1&0&0&0&0&0\\
      0&0&0&0&0&0
      \end{array}\!\right)\]

    \[z\aprod \mu_{A,s}^{(2)}=ad+b^{(2)} \quad \sim\left(\!\begin{array}{cccccc}
      0&0&0&1&0&0\\
      0&1&0&0&0&0\\
      0&0&0&0&0&0\\
      1&0&0&0&0&0\\
      0&0&0&0&0&0\\
      0&0&0&0&0&0
      \end{array}\!\right)\]

    Using \eqref{eq: MultMatrcesFromTensor}, we obtain 
    \[M_y^T=
\begin{pmatrix}
0 & 1 & 0 & 0 & 0 & 0 \\
0 & 0 & 0 & 1 & 0 & 0 \\
0 & 0 & 0 & 0 & 1 & 0 \\
0 & 0 & 0 & 0 & 0 & 0 \\
0 & 0 & 0 & 0 & 0 & 1 \\
0 & 0 & 0 & 0 & 0 & 0
\end{pmatrix} \quad M_z^T= 
\begin{pmatrix}
0 & 0 & 1 & 0 & 0 & 0 \\
0 & 0 & 0 & 0 & 1 & 0 \\
0 & 0 & 0 & 0 & 0 & 0 \\
0 & 0 & 0 & 0 & 0 & 1 \\
0 & 0 & 0 & 0 & 0 & 0 \\
0 & 0 & 0 & 0 & 0 & 0
\end{pmatrix}
\]
In this case we know the dual generator of $A$ from \Cref{ex:ArtinianGorensteinPart2}, so the entries can be easily verified.
\end{example}

\section{The cactus algorithm by degree extension}\label{Section:The cactus algorithm}

In this section we recall the cactus algorithm based on degree extension, in the formulation of \cite{Alessandra}.
Starting from a homogeneous form $F\in \Sym^d(V)$ (or its divided-power counterpart), the method searches for a finite-dimensional Artinian Gorenstein algebra apolar to $F$ by completing suitable Hankel (catalecticant) data.
Once a flat extension is found, the associated multiplication matrices encode the candidate algebra and allow one to recover a 0-dimensional scheme apolar to $F$, thus producing the cactus rank.

\begin{definition}
    For $F\in S_d^*$, the \emph{cactus rank} of $F$ is the minimal length of a scheme apolar to $F$. The \emph{local cactus rank} is the minimal length of a local apolar scheme.
\end{definition}

The algorithm described in \cite{Alessandra}, which computes apolar schemes to a given form $F=\sum_{|\beta|=d}f_\beta X^{(\beta)} \in S_d^*=\K_{\mathrm{dp}}[Y_0,\ldots,Y_n]_d$, such that the associated Artinian Gorenstein algebra $A=\K[x_1,\ldots,x_n]/\operatorname{Ann}(\Lambda)$ has a dual generator $\Lambda\in R^*$ extending $f:=F(X_0=1)\in R_{\leq d}^*$. Therefore the representation of $\Lambda$ in the divided power basis
\[\Lambda=\sum_{\alpha\in \mathbb N^{n}} \Lambda(x^\alpha) Y^{(\alpha)}\]
is parametrized as 
\begin{equation}\label{eq: extenionOfF}
    \Lambda(x^{\alpha})=\begin{cases}
    f_{\beta} \quad \text{for } \beta=(d-|\alpha|, \alpha_1,\ldots,\alpha_n)  \quad \text{if } |\alpha|\leq d \\
    h_\alpha \quad \text{else}
\end{cases}.
\end{equation}

After a guess on the monomial basis $B$ of $r$ elements of the algebra $R/\operatorname{Ann}(\Lambda)$ (see \Cref{Section 6}), 
one builds the \emph{Hankel operator} of $\Lambda\in R^*$, that is the map $H_\Lambda:R\to R^*$, $p\mapsto p\aprod \Lambda$, and the Hankel operator restricted to $\langle B\rangle $ and its dual, denoted $H_\Lambda^B$. We can then obtain the matrices $M_{x_i}(h)$ of the maps of multiplication by $x_i$ in the algebra $A$ using the identity 

\begin{equation}\label{eq: MultMatrixHankel}
    H_{x_i\aprod \Lambda}^B=M_{x_i}^t\circ H_\Lambda^B
\end{equation}

(cf. \cite[Proposition 3.9]{Bernard}). 
%These yield the parametrized border relations, which generate the ideal $I=\operatorname{Ann}(\Lambda)\subseteq R=\K[x_1,\ldots,x_n]$. By \cite{NormalForm}, $B$ is a basis of the quotient $A=R/I$ if and only if the matrices $M_{x_i}(h),M_{x_j}(h)$ commute for all $i$ and $j$. By construction, $A$ is a Gorenstein Artinian algebra with dual generator $\Lambda$. This yields the following algorithm (see \cite{Alessandra}):
Since the matrix $M_{x_i}$ corresponds to the multiplication by $x_i$ in the algebra $\K[x_1,\ldots,x_n]/\operatorname{Ann}(\Lambda)$, $M_{x_i}$ commutes with $M_{x_j}$ for all $i,j=1,\ldots,n$. This yields the following algorithm (see \cite{Alessandra}):
\medskip

\vspace{0.5cm}
\begin{mdframed}[]
\begin{alg}[Cactus rank and decomposition]\label{alg:cactus}\end{alg}
%\begin{center}
%\underline{\textbf{Algorithm:} Cactus rank and decomposition}
%\end{center}
\vspace{0.1cm}
\noindent\textbf{Input:} A degree $d \geq 2$ polynomial $F \in S_d^*$ \\
    \textbf{Output:} Cactus rank of $F$.

\begin{enumerate}
    \item Construct the matrix $H_{\Lambda(h)}$ with parameters $\{h_{\alpha}\}_{\alpha\in \mathbb N^n}$, $|\alpha|>d$.
    \item\label{alg:H:star} Set $r$ as the highest rank of a numerical subminor of $H_{\Lambda(h)}$.
    \item\label{alg:basis} Take $B\subseteq R$ a complete staircase of monomials with $|B|=r$, do: 
    \begin{itemize}
        \item\label{alg:h's} Find $h$'s such that:
        \begin{itemize}
            \item $H_{\Lambda(h)}^B$ has nonzero determinant
            \item The multiplication operators $(M_{x_i})^t$ commute for all $i=1,\ldots,n$.
        \end{itemize}
        \item If found, the cactus rank of $F$ is $r$. If not, go to \Cref{alg:basis} with another choice of bases $B$. If all choices of $B$ with $|B|=r$ have been already performed, go to \Cref{increase:r}.
    \end{itemize}
    \item\label{increase:r} Set $r\to r+1$ and go to \Cref{alg:basis}.
    \end{enumerate}
\end{mdframed}
\vspace{0.5cm}

Once a set of numerical values of $h$ have been found in \Cref{alg:h's}, one can efficiently identify the support and structure of the scheme defined by the Artinian Gorenstein algebra. We discuss it in \Cref{Section:Scheme structure from multiplication matrices}.

\section{The cactus rank through multiplication tensors}\label{Section4}

In this section we show that apolarity of 0-dimensional schemes can also be naturally expressed in terms of multiplication tensors of the corresponding Artinian algebras. 
%where essentially we parametrize the symmetric structure tensor and we impose the existence of a restriction to $F$ (see \ref{Thm: Restriction}), centroid abundance and $1_*$-genericity conditions.

\begin{example}\label{ex: WaringRestriction}
    It is easy to see that the $(d-1)$-th iterated multiplication tensor of the algebra $A=\K^r$ is 
    \[\mu_{A}^{(d-1)}=X_1^{(d)} + X_2^{(d)} + \cdots + X_r^{(d)}\in \Sym^d(A^*),\]
    where $X_i$ is the dual element of the canonical vector $e_i$.
    Now let $F\in S_d^*=\K_{\mathrm{dp}}[X_0,\ldots, X_n]_d$ and assume we have a Waring decomposition
    \[F=L_1^{(d)} + \cdots + L_r^{(d)}\]
    with $L_i\in S_1^*$ and $r>n+1$. Then we have a surjection $\phi:A^*\to V$ given by $X_i\mapsto L_i$, such that $F$ is the image of $F'$ under the map induced by the \emph{restriction} $\phi$. 
\end{example}

In \cite{Buczynska_Buczynski__border}, the authors show the existence of this restriction when a general tensor $T$ has border rank at most $m$. Geometrically, this shows that we can produce all symmetric tensors lying in (the cone of) the $m$-th secant variety of the Segre variety by considering only those living in $V_1\otimes \cdots \otimes V_d$ with $\dim V_i=m$ (called \emph{tensors of minimal border rank}), and then produce all possible restrictions.  

\medskip

In analogy with the border rank, we define the \emph{minimal cactus rank}.

\begin{definition} We
say that a concise polynomial $F\in \Sym^d V$ has \emph{minimal cactus rank} if the cactus rank of $F$ is equal to the dimension of $V$.
\end{definition}

The next result %, which is from a private communication with J. Jelisiejew, 
is the symmetric version of \cite[Theorem 9.1]{MinBrk}.

\begin{proposition}\label{prop: ApolarIsMultTensor}
    Let $G\in S^dV^*$ concise, $\dim_\K V=r$ and $Z=\operatorname{Spec}(R)$ with $R$ an Artinian Gorenstein algebra of length $r$ defining a zero dimensional scheme $Z$ in $\mathbb PV$. The following are equivalent:
    \begin{itemize}
        \item $G\in \langle \nu_d(Z)\rangle$, that is, $Z$ is apolar to $G$.
        \item $G$ is isomorphic to the $(d-1)$-th iterated structure tensor of $R$.
    \end{itemize}
\end{proposition}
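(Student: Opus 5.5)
The plan is to translate apolarity into a statement about a linear functional on $R$. Then both implications reduce to the formula \eqref{eq: MultTensorOfAG} of \Cref{Rmk: SymmetricMultTensor}. Throughout, $G$ is regarded as a symmetric multilinear form on $V^*$, i.e.\ as an element of $S_d^*$ in the divided-power language of \Cref{sec:prelim}.

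\textbf{Step 1: identify $V^*$ with $R$.} Since $Z$ is finite, $\mathcal O_Z(1)$ is trivial. Fixing a trivialization gives, for every $e$, restriction maps $\rho_e\colon S_e=\Sym^e V^*\to H^0(\mathcal O_Z(e))\cong R$. These are multiplicative: $\rho_{e+e'}(pq)=\rho_e(p)\rho_{e'}(q)$.

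The span $\langle\nu_d(Z)\rangle$ is the annihilator of $\ker\rho_d=I(Z)_d$. So $G\in\langle\nu_d(Z)\rangle$ holds exactly when $G$ factors through $\rho_d$, that is, when $G=\Lambda\circ\rho_d$ for some $\Lambda\in R^*$. Unwinding this gives
\[
G(\ell_1,\ldots,\ell_d)=\Lambda\bigl(\rho_1(\ell_1)\cdots\rho_1(\ell_d)\bigr).
\]

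\textbf{Step 2: first implication.} Assume $Z$ is apolar to $G$ and write $G=\Lambda\circ\rho_d$ as in Step 1.

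\begin{itemize}
\item \emph{$\rho_1$ is injective.} If $\rho_1(\ell)=0$, then $G(\ell,-,\ldots,-)=0$, which contradicts conciseness of $G$. Since $\dim V^*=r=\dim R$, the map $\rho_1$ is an isomorphism. In particular, there is $\ell_0\in V^*$ with $\rho_1(\ell_0)=1$.
\item \emph{$\Lambda$ is a dual generator.} Suppose $a\in R$ satisfies $\Lambda(aR)=0$. Write $a=\rho_1(\ell)$. For every $\ell'$,
\[
G(\ell,\ell',\ell_0,\ldots,\ell_0)=\Lambda\bigl(a\,\rho_1(\ell')\bigr)=0 .
\]
So $\ell$ lies in the kernel of a flattening of $G$, and conciseness forces $a=0$. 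Hence the pairing $(a,b)\mapsto\Lambda(ab)$ is nondegenerate. This says $R\to R^*$, $a\mapsto a\aprod\Lambda$, is an isomorphism, i.e.\ $\Lambda$ generates $R^*$.
\item \emph{Conclusion.} Transporting $G$ along $\rho_1$ gives exactly $\mu_{R,s}^{(d-1)}(a_1,\ldots,a_d)=\Lambda(a_1\cdots a_d)$ by \eqref{eq: MultTensorOfAG}. By \Cref{Rmk: SymmetricMultTensor}\eqref{Rmk:SymmetricMultTensor:4}, this tensor is isomorphic to the $(d-1)$-th iterated structure tensor $\mu_R^{(d-1)}$.
\end{itemize}

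\textbf{Step 3: converse.} Assume $G\cong\mu_{R,s}^{(d-1)}$. This means there is a linear isomorphism $\psi\colon V^*\to R$ with $G(\ell_1,\ldots,\ell_d)=\Lambda(\psi(\ell_1)\cdots\psi(\ell_d))$.

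\begin{itemize}
\item Since $1\in\psi(V^*)$ and $\psi(V^*)=R$ generates $R$ as an algebra, the surjection $\Sym V^*\to R$ induced by $\psi$ defines an embedding of $\operatorname{Spec}R$ into $\mathbb PV$. Its image lies in the affine chart $\{\psi^{-1}(1)\neq0\}$, and under it $\rho_1=\psi$.
\item The formula for $G$ then shows that $G$ kills $\ker\rho_d=I(Z)_d$. By Step 1, $G\in\langle\nu_d(Z)\rangle$.
\end{itemize}

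\textbf{Main obstacle.} The delicate point is making precise what ``isomorphic'' must mean relative to the fixed embedding of $Z$. For the converse to hold for the given $Z$ rather than for some embedding of $\operatorname{Spec}R$, the isomorphism $\psi$ has to be compatible with the given restriction map $\rho_1$. I would therefore prove the statement with ``isomorphic via the isomorphism $V^*\cong R$ induced by restriction to $Z$''. As Step 2 shows, that is exactly what the forward direction produces.

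A secondary check is that changing the trivialization of $\mathcal O_Z(1)$ only multiplies $\rho$ by a unit $u\in R$ and replaces $\Lambda$ by $u^{-d}\aprod\Lambda$. This leaves the isomorphism class of the tensor unchanged. Finally, working in characteristic zero lets us pass freely between $S_d^*$ and symmetric multilinear forms, via \Cref{rmk: DividedPowersvsDiff}.
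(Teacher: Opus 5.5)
Your argument is correct and is essentially the paper's proof: conciseness makes the restriction $V^*=H^0(\mathcal O_{\mathbb PV}(1))\to H^0(\mathcal O_Z(1))\cong R$ injective, hence an isomorphism by counting dimensions, so $G$ factors through multiplication in $R$; for the converse, an isomorphism $V^*\cong R$ is used to embed $\operatorname{Spec}R$ so that $G$ kills $I(Z)_d$. Your check that $\Lambda$ is a dual generator (needed to invoke item 4 of \Cref{Rmk: SymmetricMultTensor}) and your caveat that $\psi$ must agree with restriction to the given $Z$ make explicit what the paper's proof leaves implicit, and the caveat is genuinely necessary; the only slip is that the displayed vanishing of $G(\ell,\ell',\ell_0,\ldots,\ell_0)$ does not by itself put $\ell$ in the kernel of a flattening of $G$, so instead write $G(\ell,\ell_2,\ldots,\ell_d)=\Lambda\bigl(a\,\rho_1(\ell_2)\cdots\rho_1(\ell_d)\bigr)=0$ for all $\ell_i$, which follows at once from $\Lambda(aR)=0$.
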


\begin{proof}
    For every $e\geq 1$ we have the exact sequence of sheaves 
    \[0\to \mathcal{I}_Z(e)\to \mathcal{O}_{\mathbb PV}(e)\to \mathcal{O}_{Z}(e)\to 0,\]
    which induces the exact sequence of global sections
    \[0\to H^0(\mathcal{I}_Z(e))\to H^0(\mathcal{O}_{\mathbb PV}(e))\to H^0(\mathcal{O}_{Z}(e)).\]
    Now if $G$ is concise and $G\in \langle \nu_d(Z)\rangle$, then  
    %By definition, $\langle\nu_d(Z)\rangle$ is the perpendicular space (with respect to the canonical duality pairing) to $I(Z)_d$, that is, of 
    the kernel of the map
    \begin{equation}\label{eq: LineBundles}
        H^0(\mathcal{O}_{\mathbb PV}(1))\to H^0(\mathcal{O}_{Z}(1))
    \end{equation}
    %\[ H^0(\mathcal{O}_{\mathbb PV}(d))\to H^0(\mathcal{O}_{Z}(d)).\]
    is trivial. Indeed, if $\alpha$ is an element of the kernel, that is, an element in $H^0(\mathcal{I}_Z(1))=I(Z)_1$, then we have that 
    \[\alpha\cdot \Sym^{d-1}V^*\in I(Z)_d \subseteq \operatorname{Ann}(G)_d,\]
    where the last inclusion is the definition of $G\in \langle\nu_d(Z)\rangle$. Hence $\alpha\aprod G=0$, which contradicts the conciseness of $G$. Since $Z$ is a finite scheme, for all $e$ we have $H^0(\mathcal{O}_{Z}(e))\cong H^0(\mathcal{O}_{Z})\cong R$, so by a dimension argument we have   
    \[V^*=H^0(\mathcal{O}_{\mathbb PV}(1)\cong H^0(\mathcal{O}_{Z})\cong R.\] 
    %Similarly, $H^0(\mathcal{O}_{\mathbb PV}(d))\cong H^0(\mathcal{O}_{Z}(d))$.
    Thus, the multiplication of sections of line bundles 
    \[\Sym^{d}(H^0(\mathcal{O}_{\mathbb PV}(1))\cong H^0(\mathcal{O}_{\mathbb PV}(d))\]
    is the multiplication in $R$:
    \[R\times \cdots \times R\to R.\]
    Now $G$ is a symmetric multilinear map $G:V^*\times \ldots \times V^*\to \K$, which using $V^*\cong R$ factors as 
    \[\begin{tikzcd}
	{G:R\times \cdots \times R} & \K \\
	R
	\arrow[from=1-1, to=1-2]
	\arrow[from=1-1, to=2-1]
	\arrow[from=2-1, to=1-2]
\end{tikzcd}.\]
Since the vertical map is the multiplication in $R$, $G$ is a multiplication tensor in $R$.

Conversely, fix an isomorphism $\phi:V^*\cong R$ and define $$\Sym^dV^*\to R \quad v_1\otimes\cdots \otimes v_d\mapsto \phi(v_1)\cdots \phi(v_d),$$
which allows us to construct the nondegenerate Veronese embedding $R\hookrightarrow \mathbb P(\Sym^dV)$. Then $G$ vanishes at every polynomial of $I(Z)_d$, that is, $Z$ is apolar to $G$.
\end{proof}

From \Cref{prop: ApolarIsMultTensor} and \Cref{Rmk: SymmetricMultTensor} we can deduce an efficient algorithm for determining whether a concise homogeneous form has minimal cactus rank:

\vspace{0.5cm}
\begin{mdframed}[]
\begin{alg}[Minimal Cactus rank]\label{alg:minimalcactus}\end{alg}
%\begin{center}
%\underline{\textbf{Algorithm:} Cactus rank and decomposition}
%\end{center}
\vspace{0.1cm}
\noindent\textbf{Input:} A degree $d \geq 2$ concise tensor, identified with a polynomial $F \in \K[x_0,\ldots, x_n]$. \\
\textbf{Output:} TRUE if the cactus rank of $F$ is $n+1$, FALSE otherwise.

\begin{enumerate}
    \item Compute $\operatorname{Ann}(F)_{\leq d-1}$ (e.g., via kernels of catalecticant matrices) and the ideal $I=(\operatorname{Ann}(F)_{\leq d-1})$.
    \item Compute the Hessian of $F$, 
    \[\operatorname{Hess}_F=\left( \frac{\partial^2 F }{\partial x_i\partial x_j}\right)_{i,j=0,\ldots, n}.\]
    \item If $\operatorname{HF}(\K[x_0,\ldots, x_n]/I, d)=n+1$ and $\det \operatorname{Hess}_F\neq 0$, then return TRUE. Otherwise return FALSE.
    \end{enumerate}
\end{mdframed}

\bigskip

The same idea of the proof can be used for tensors with nonminimal cactus rank.

\begin{proposition}
    \label{Thm: Restriction}
    Let $F\in S^dV$ be concise and assume that the cactus rank of $F$ is $\leq r$, with $r\geq n+1$. Then there exists a concise tensor $G\in S^d W$ of cactus rank $r$ with $\dim_\K W=r$, and a restriction $\phi:W\to V$ such that $\Sym(\phi)(G)=F$. 
\end{proposition}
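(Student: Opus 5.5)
We first produce an apolar scheme of length exactly $r$, then apply the mechanism of \Cref{prop: ApolarIsMultTensor} with the scheme's algebra in place of $V^*$.

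\textbf{Step 1: a Gorenstein apolar scheme of length exactly $r$.} Since $\operatorname{crk}(F)\le r$, there is a zero-dimensional scheme $Z_0\subset\mathbb PV$ of length $r_0\le r$ apolar to $F$. By \cite{JarekWer} we may take $Z_0$ Gorenstein. If $r_0<r$, add $r-r_0$ distinct reduced points of $\mathbb PV$ disjoint from $Z_0$. This gives $Z=Z_0\sqcup\{p_1,\dots,p_{r-r_0}\}$ with $I(Z)\subseteq I(Z_0)\subseteq\operatorname{Ann}(F)$. The algebra $R=H^0(\mathcal O_Z)$ is a product of $H^0(\mathcal O_{Z_0})$ with copies of $\K$, hence still Gorenstein, of length $r$.

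\textbf{Step 2: the tensor $G$.} Set $W:=R^*$ and let $G:=\mu^{(d-1)}_{R,s}\in\Sym^d(R^*)$ be the symmetric iterated multiplication tensor from \Cref{Rmk: SymmetricMultTensor}, with respect to a dual generator $\Lambda$. Concretely,
\[
G(a_1,\dots,a_d)=\Lambda(a_1\cdots a_d).
\]
$G$ is concise: since $\Lambda$ generates $R^*$, the flattening $a\mapsto G(a,1,\dots,1,-)=a\aprod\Lambda$ is injective. Also $\dim W=r$. For the cactus rank, view $R$ as $H^0(\mathcal O_Z)$ for $Z\subset\mathbb P(W)$ embedded by the complete linear system $W^*=R$. \Cref{prop: ApolarIsMultTensor} (converse direction) then says $Z$ is apolar to $G$, so $\operatorname{crk}(G)\le r$. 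Conversely, a concise form in $\Sym^dW$ has cactus rank at least $\dim W=r$, because an apolar scheme of length $<r$ spans a proper subspace and would force a linear form into $\operatorname{Ann}(G)$. Hence $\operatorname{crk}(G)=r$.

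\textbf{Step 3: the restriction $\phi$.} This is the main point and adapts the first half of the proof of \Cref{prop: ApolarIsMultTensor}. Consider the restriction map $\psi\colon V^*=H^0(\mathcal O_{\mathbb PV}(1))\to H^0(\mathcal O_Z(1))\cong R$. Its kernel is $I(Z)_1$, and this vanishes: if $\alpha\in I(Z)_1$, then $\alpha S_{d-1}\subseteq I(Z)_d\subseteq\operatorname{Ann}(F)_d$, so $\alpha\aprod F=0$, contradicting conciseness. So $\psi$ is injective, and we set $\phi:=\psi^*\colon W=R^*\to V$, which is surjective.

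It remains to check $\Sym(\phi)(G)=F$, that is, $F(v_1,\dots,v_d)=\Lambda(\psi(v_1)\cdots\psi(v_d))$ for $v_i\in V^*$. Using a trivialization $\mathcal O_Z(1)\cong\mathcal O_Z$, the map $S_d\to H^0(\mathcal O_Z(d))\cong R$ is $v_1\cdots v_d\mapsto\psi(v_1)\cdots\psi(v_d)$, and its kernel is $I(Z)_d$. Since $I(Z)_d\subseteq\operatorname{Ann}(F)_d$, the functional $F$ on $S_d$ factors as $\ell\circ(\text{this map})$ for some $\ell\in R^*$.

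\textbf{Expected obstacle.} The delicate part is that $\ell$ need not be a dual generator $\Lambda$ of $R$, and $G$ must be defined via a generator. First attempt: $F$ is only determined on the image of $S_d$ in $R$, which is all of $R$ because $Z$ is $d$-regular (length $r$ and $\nu_d$-independent), so $\ell$ is uniquely determined. The trivialization may be changed by a unit $u\in R$, which replaces $\ell$ by $u^d\aprod\ell$. Rather than proving that $\ell$ generates, we choose the added points generally so that $\langle\nu_d(Z)\rangle$ is still minimal for $F$ in the local components; equivalently, we perturb $\ell$ on the new reduced points, since $F$ does not see them when $F$ already lies in $\langle\nu_d(Z_0)\rangle$. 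We may therefore set the coefficients of $\ell$ on the added points to be nonzero. For the $Z_0$ part, a minimal Gorenstein apolar scheme can be chosen so that $\ell|_{Z_0}$ generates. If $Z_0$ is not minimal, first replace it by a minimal one of length $\operatorname{crk}(F)$; minimality forces $\ell$ to generate on each local component, since otherwise $\operatorname{Ann}(\ell)$ would give a smaller apolar scheme. Then $\ell$ is a dual generator, we put $\Lambda:=\ell$, and $\Sym(\phi)(G)=F$ follows.
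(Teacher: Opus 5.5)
\textbf{There is a genuine gap in your Step 3, in the padded case.}

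Your route is the paper's: pad a Gorenstein apolar scheme with reduced points, take $W=R^*$, let $\phi$ be dual to $\psi\colon V^*\hookrightarrow H^0(\mathcal O_Z(1))\cong R$, and get $\operatorname{crk}(G)=r$ from conciseness plus \Cref{prop: ApolarIsMultTensor}. You also correctly spot a point the paper's proof asserts without discussion. With this $\phi$, the identity $\Sym(\phi)(G)=F$ forces $\Lambda=\ell$ on $\operatorname{im}\pi_d$, where $\pi_d\colon S_d\to R$. So some extension $\ell$ must be a dual generator.

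Your resolution of this point fails exactly when you pad. Let $e_i\in R$ be the idempotent of an added point $p_i$, and suppose $e_i=\pi_d(q)$ for some $q\in S_d$. Then $q$ vanishes on $Z_0$, so $q\in I(Z_0)_d\subseteq\operatorname{Ann}(F)_d$. Hence every admissible $\Lambda$ has $\Lambda(e_i)=F(q)=0$, so $e_i\aprod\Lambda=0$ and $G$ is not concise. The fact that $F$ ``does not see'' the new points is precisely what pins $\ell(e_i)=0$; it does not free that value.

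General $p_i$ impose independent conditions on $I(Z_0)_d$ (as long as $r-s\le\dim I(Z_0)_d$), and this puts every $e_i$ in $\operatorname{im}\pi_d$. Under your own surjectivity claim for $\pi_d$, it holds for every $i$. Forcing $\Lambda(e_i)\neq0$ instead changes $\Sym(\phi)(G)$ by a nonzero multiple of $P_i^{(d)}$, where $P_i\in V$ represents $p_i$.

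A concrete counterexample: take $F=X_0X_1$, $d=2$, $r=3$, $Z_0=V(x_0^2-x_1^2)$. The quadric $x_0^2-x_1^2$ does not vanish at any third point $p_1$, so no choice of $p_1$ rescues the construction. Separately, the claimed $d$-regularity of $Z$ is unjustified. It fails whenever $r>\dim S_d$, and $r$ is unbounded in the statement.

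\textbf{The missing idea is that the restriction must not come from embedding the padding points.}

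Take $Z_0$ minimal, of length $s=\operatorname{crk}(F)$, with $R_0=H^0(\mathcal O_{Z_0})$ and $\psi_0,\pi_d$ as in your Step 3.
\begin{enumerate}
\item The extensions of $F$ from $\operatorname{im}\pi_d$ to $R_0^*$ form an affine space. Since $R_0$ is Gorenstein, $\ell$ is a dual generator iff $\ell(s_k)\neq0$ for the socle generator $s_k$ of each local factor.
\item Suppose the whole affine space lay in $\{\ell(s_k)=0\}$. Then $s_k\in\operatorname{im}\pi_d$, and $F$ vanishes on $\pi_d^{-1}(\K s_k)=I(Z')_d$. Here $Z'\subset Z_0$ is the length-$(s-1)$ subscheme defined by the ideal $(s_k)=\K s_k$. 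This contradicts minimality.
\item Hence a general extension $\ell$ is a dual generator. No regularity or uniqueness of $\ell$ is needed.
\item Now put $W=R_0^*\oplus\K^{r-s}$, with $Y_1,\dots,Y_{r-s}$ a basis of the second summand. Set $G=\mu^{(d-1)}_{R_0,s}+Y_1^{(d)}+\cdots+Y_{r-s}^{(d)}$, with $\mu^{(d-1)}_{R_0,s}$ taken with respect to $\ell$. This $G$ is the symmetric multiplication tensor of $R_0\times\K^{r-s}$ for the dual generator $(\ell,1,\dots,1)$.
\item Let $\phi=\psi_0^*\oplus 0$. Then $\Sym(\phi)(G)=F$, and your Step 2 applies verbatim to give conciseness and $\operatorname{crk}(G)=r$.
\end{enumerate}
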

\begin{proof}
Let $s \leq r$ and let $Z_s \subset \mathbb P V$ be a zero-dimensional scheme of length s apolar to F. By \cite[Proposition 2.3]{JarekWer}, we can assume $Z_s$ is Gorenstein. If $s<r$, choose $r-s$ distinct reduced points
$P_1,\ldots,P_{r-s}$ disjoint from $Z_s$ and set $Z := Z_s \sqcup \{P_1,\ldots,P_{r-s}\}$.
Then Z is still Gorenstein, and it is still apolar to F since $I(Z)=I(Z_s)\cap I(P_1)\cap\cdots\cap I(P_{r-s})\subseteq I(Z_s)\subseteq \operatorname{Ann}(F)$.
Thus, replacing $Z_s$ by $Z$, we may assume that $Z$ has length $r$.
Write $Z=\operatorname{Spec}(R)$. Since F is concise, as in the proof above, we have that the map \eqref{eq: LineBundles} has zero kernel, so there is an injection 
    \[\phi:V^*\cong H^0(\mathcal{O}_{\mathbb PV}(1))\hookrightarrow H^0(\mathcal{O}_Z (1))\cong R,\]
    and therefore a surjection $R^*\to V$. Again by multiplication of global sections of line bundles, we have the diagram
    \[\begin{tikzcd}
	{F:\Sym^d H^0(\mathcal{O}_{\mathbb PV}(1))} & \K \\
	H^0(\mathcal{O}_{\mathbb PV}(d))
	\arrow[from=1-1, to=1-2]
	\arrow[from=1-1, to=2-1]
	\arrow[from=2-1, to=1-2]
\end{tikzcd}.\]
Via the map $\phi$ we have that this is the restriction of the multiplication tensor of $R$, that is, $\phi$ induces a restriction of $\mu_{A}^{(d-1)}$ equal to $F$. Since $\mu_{R}^{(d-1)}$ is concise, it has cactus rank at least $r$,  and by the previous result $Z$ is apolar to $\mu_{R}^{(d-1)}$, so $\mu_{R}^{(d-1)}$ has cactus rank $r$.
\end{proof}

\begin{comment}
\begin{remark}
Moreover, its $111$-algebra is isomorphic to $R$, and the map above is is fact $A_{111}^{\mu_R^{(d)}}$-multilinear, that is, for every $r\in R$ we have
    \[\mu_R^{(d)}(ra_1,\ldots,a_d)=\mu_R^{(d)}(a_1,ra_2,\ldots,a_d)=\ldots =\mu_R^{(d)}(a_1,\ldots,ra_d)=r\mu_R^{(d)}(a_1,\ldots,a_d)\]

\end{remark}

\begin{remark}
If $A$ is AG with dual generator $\Lambda$, the isomorphisms $A\cong A^*$ induce an isomorphic symmetric tensor in $\Sym^{d+1} R^*$, defined as
\[(a_1,\ldots, a_d)\mapsto \Lambda(a_1\cdots a_d)\]  
\end{remark}
\end{comment}

In the proof above we see that the polynomial $G$ in \Cref{Thm: Restriction} is actually very peculiar.

\begin{corollary}\label{Corollary: ApolarityAndRestriction}
    Let $F\in S_d^*$ and let $\operatorname{Spec}(R)\subseteq \mathbb P V$ be a zero dimensional Gorenstein scheme apolar to $F$ of length $r\geq n+1$. Then there exists a restriction $\phi:R^*\to V$ such that $\Sym(\phi)(\mu_{R}^{(d-1)})=F$.
\end{corollary}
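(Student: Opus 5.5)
The plan is to repeat the construction in the proof of \Cref{Thm: Restriction}, but without the step that enlarges the scheme: here $Z=\operatorname{Spec}(R)$ is given and already Gorenstein of length $r$.

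First, we may assume $F$ concise, since the statement implicitly uses the surjectivity coming from conciseness. Apolarity $I(Z)\subseteq\operatorname{Ann}(F)$ then forces the restriction map
\[
\phi^\vee\colon V^*=H^0(\mathcal O_{\mathbb PV}(1))\to H^0(\mathcal O_Z(1))\cong R
\]
to be injective. Indeed, if $\alpha\in I(Z)_1$ then $\alpha\cdot S_{d-1}\subseteq I(Z)_d\subseteq \operatorname{Ann}(F)_d$, so $\alpha\aprod F=0$, and hence $\alpha=0$ by conciseness. Here we fix a trivialization $\mathcal O_Z(1)\cong\mathcal O_Z$; this is possible since $Z$ is finite.

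Dualizing $\phi^\vee$ gives the surjection $\phi\colon R^*\to V$, and I take this as the restriction. Since $R$ is Gorenstein, I use the symmetric form $\mu_{R,s}^{(d-1)}\in\Sym^d(R^*)$ of \Cref{Rmk: SymmetricMultTensor}\eqref{Rmk:SymmetricMultTensor:4}, given by $(a_1,\dots,a_d)\mapsto\Lambda(a_1\cdots a_d)$. The dual generator $\Lambda\in R^*$ still has to be chosen.

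The key step is to choose $\Lambda$ so that $F(v_1,\dots,v_d)=\Lambda(\phi^\vee(v_1)\cdots\phi^\vee(v_d))$ for all $v_i\in V^*$. The multiplication of sections $S_d=\Sym^d H^0(\mathcal O(1))\to H^0(\mathcal O_Z(d))\cong R$ is the product in $R$ of the images under $\phi^\vee$, and its kernel is exactly $I(Z)_d$. Since $F$ vanishes on $I(Z)_d$, it factors through the image of this map, which is a subspace of $R$. Extending the resulting functional arbitrarily to all of $R$ gives some $\Lambda\in R^*$ with $F=\Lambda\circ\mathrm{mult}\circ\Sym^d(\phi^\vee)$. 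This identity says precisely that $\Sym(\phi)(\mu^{(d-1)}_{R,s})=F$.

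The main obstacle is that $\Lambda$ must be a dual generator of $R$; otherwise the tensor is not the Gorenstein multiplication tensor. Dual generators form a nonempty Zariski-open subset of $R^*$, because $R$ is Gorenstein. So I would exploit the freedom in the extension: $\Lambda$ is determined only on the image $U$ of $S_d\to R$, and any $\Lambda'=\Lambda+\epsilon$ with $\epsilon\in U^\perp$ works equally well. If $U\neq R$, the affine space $\Lambda+U^\perp$ should meet the open set of generators, which I would check via the local structure of $R$ (each local socle must be hit). If $U=R$, I would argue directly that $\Lambda$ is a generator: otherwise some nonzero $p\in R$ has $p\aprod\Lambda=0$, and pulling $p$ back to $S_e$ for a suitable $e$ would contradict apolarity and the minimality of $Z$. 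If that fails, I would fall back on the convention that $\mu_R^{(d-1)}$ denotes the $\Lambda$-symmetrized tensor for the given apolar functional.
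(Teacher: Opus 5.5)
The gap is in your last step. There you keep $\phi$ equal to the dual of the embedding map $\phi^\vee\colon V^*\to H^0(\mathcal O_Z(1))\cong R$ and try to move $\Lambda$ inside $\Lambda+U^\perp$ until it becomes a dual generator.

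In the case $U=R$ (the typical one, e.g.\ for $d\gg0$) there is no freedom at all. Your argument that $\Lambda$ is then a generator uses minimality of $Z$, which the corollary does not assume, and without minimality the claim is false. Take $n=1$, $F=X_0^{(d-1)}X_1$ (concise), and $Z$ the curvilinear scheme at $[1:0]$ with $I(Z)=(x_1^3)$, so $R=\K[t]/(t^3)$ with $t=x_1/x_0$. Then:
\begin{enumerate}
\item $Z$ is Gorenstein of length $3\geq n+1$, and it is apolar to $F$ because $(x_1^3)\subseteq(x_1^2)\subseteq\operatorname{Ann}(F)$.
\item The map $S_d\to R$, $x_0^{d-k}x_1^k\mapsto t^k$, is onto, so $U=R$.
\item It forces $\Lambda(1)=\Lambda(t^2)=0$ and $\Lambda(t)=1$. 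Hence $t^2\aprod\Lambda=0$, and no dual generator is compatible with your $\phi$.
\item Changing the trivialization only replaces $\Lambda$ by $u^{-d}\aprod\Lambda$ for a unit $u$, which does not help.
\end{enumerate}
Your $U\neq R$ case is only asserted, not checked.

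In this example the conclusion does hold, but only after changing $\phi$. Take $\Lambda(t^2)=1$ and $\Lambda(1)=\Lambda(t)=0$. In the dual basis $a,b,c$ of $1,t,t^2$ one gets $\mu^{(d-1)}_{R,s}=a^{(d-2)}b^{(2)}+a^{(d-1)}c$. The map $\phi(a)=X_0$, $\phi(b)=0$, $\phi(c)=X_1$ then sends this tensor to $F$. So the freedom you need lies in $\phi$, not in $\Lambda+U^\perp$.

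Everything before that step is exactly the paper's argument. The corollary is read off from the proof of \Cref{Thm: Restriction}, which uses conciseness to embed $V^*$ into $R$ and factors $F$ through the multiplication of sections. The paper stops there and calls this factorization a restriction of the multiplication tensor, without ever choosing a dual generator. So your fallback reading, with $\mu_R^{(d-1)}$ paired with the apolar functional itself, is precisely what that argument proves. Conciseness, incidentally, is only needed to make $\phi$ surjective, not for the factorization.

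If you want the genuine Gorenstein tensor with your $\phi$, add the hypothesis that $Z$ is minimal, as in \Cref{Prop: MultTensorExtendsf}. Then no case distinction is needed. For any extension $\Lambda$, the ideal $J=\operatorname{Ann}_R(\Lambda)$ defines a subscheme $Z'\subseteq Z$ whose $I(Z')_d$ is the preimage of $J$ in $S_d$. Since $\Lambda(J)=0$, $Z'$ is apolar to $F$, and minimality then forces $J=0$.
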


\bigskip
We can illustrate the previous result with an example in a simplified setting. If $F\in S_d^*$ factors as $F=X_0^{(d-k)}\cdot G$ for some $G\in S_k^*$, then $\operatorname{Spec}(R/\operatorname{Ann(F(X_0=1))})$ is a local apolar scheme to $F$ (\cite[Corollary 31]{BJPR}).   

\begin{example}\label{Ex: MultTensorLocal}
    Let $F=XY^{(2)} + Y^{(2)}Z\in \K_{\mathrm{dp}}[X,Y,Z]$ and $f=F(X_0=1)=Y^{(2)} + Y^{(2)}Z$. 
    Consider the Artinian Gorenstein algebra $$A=\K[y,z]/\operatorname{Ann}(Y^{(2)} + Y^{(2)}Z)=\K[y,z]/(y^3,z^2).$$
    As we have seen in \Cref{ex:ArtinianGorensteinPart2}, its structure tensor of order $d$ in the dual of the basis $\{1,y,z,y^2,yz,y^2z\}$ is 
    \[\mu_{A,s}^{(2)}=ab^{(2)}+b^{(2)}c+a^{(2)}d+abe+a^{(2)}f.\]
    %Now by \cite[Proposition~4]{BJPR}, $A$ defines a scheme that is apolar to $F=XY^{(2)}+Y^{(2)}Z\in \K_{\mathrm{dp}}[X,Y,Z]_3$. 
    Consider now 
    %$F=XY^{(2)} + Y^{(2)}Z\in \Sym^3V$, $V=\langle X,Y,Z\rangle$, and 
    the linear restriction $\phi:A^*\to V$ that sends $a\mapsto X$, $b\mapsto Y$, $c\mapsto Z$ and $d,e,f\mapsto 0$. Then we have $F=\Sym^3(\phi)(\mu_{A,s}^{(2)})$, as predicted by \Cref{Corollary: ApolarityAndRestriction}.
\medskip
    Moreover, if we do the same calculation of $\mu_{A,s}^{(d-1)}$ for $d-1$ bigger than the socle degree of $A$ (which is 3 in this case), $\mu_{A,s}^{(d-1)}$ is the homogenization of $\mu_{A,s}^{(2)}$ with respect to $a$ in degree $d$. This is consistent with the fact that $A$ is apolar to a homogenization of $Y^{(2)} + Y^{(2)}Z$ in any degree.

    %Indeed, after the socle degree of $A$, or alternatively the degree of the dual generator in a polynomial representation, all the nontrivial combinations of products in $A$ to obtain the summands of the dual generator have been exhausted, and to get higher degree combinations we can only multiply by one. Then, the same restriction works and we have that $A$ is apolar to $X^{(2)}Y^{(2)} + XY^{(2)}Z$, as expected.
\end{example}

Generalizing this example, we can give a more explicit expression of multiplication tensors of apolar schemes to a polynomial. 

\begin{proposition}\label{Prop: MultTensorExtendsf}
    Let $F\in S_d^*=\K_{\mathrm{dp}}[X_0,\ldots,X_n]_d$ be identified with a concise symmetric tensor, and $Z$ be a minimal apolar scheme to $F$ of degree $r$ defined by $Z=\operatorname{Spec}(A)$. Assume $x_0$ is a nonzero divisor of $S/I(Z)$. Then $r\geq n+1$ and the structure tensor of $A$ is isomorphic to $F+G$, where $G\in 
    %(X_0,\ldots,X_n, X_{n+1},\ldots, X_{r-1})
    (X_{n+1},\ldots, X_{r-1}) \subset \K_{\mathrm{dp}}[X_0 ,\ldots, X_{r-1}]_d
    $. 
\end{proposition}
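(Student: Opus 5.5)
The plan is to reduce to the affine Gorenstein picture and read off the structure tensor in a basis adapted to the marked generators $1,x_1,\ldots,x_n$.

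\emph{Step 1: the algebra $A$ and the inequality $r\geq n+1$.} By \cite[Proposition 2.3]{JarekWer} we may take $Z$ Gorenstein. Since $x_0$ is a nonzero divisor on $S/I(Z)$, $Z$ lies in the chart $\{x_0\neq 0\}$. Hence $A\cong R/I$ with $I=I(Z)|_{x_0=1}$, and the dehomogenized apolarity says that $I\subseteq\operatorname{Ann}(f)$ in the sense of degree-$\le d$ moments. Concretely, there is a dual generator $\Lambda\in A^*\subseteq R^*$ with $\Lambda(x^\alpha)=f_{(d-|\alpha|,\alpha)}$ for $|\alpha|\le d$. This is the standard fact behind \eqref{eq: extenionOfF}, and I would justify it by the argument of \cite{BJPR}/\cite{Alessandra}. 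As in the proofs of \Cref{prop: ApolarIsMultTensor} and \Cref{Thm: Restriction}, conciseness of $F$ makes
\[V^*=H^0(\mathcal O(1))\to H^0(\mathcal O_Z(1))\cong A\]
injective. In the chart this injection is $x_0\mapsto 1$, $x_i\mapsto x_i$, so $1,x_1,\ldots,x_n$ are linearly independent in $A$. This gives $r\ge n+1$. Complete them to a basis $B=\{1,x_1,\ldots,x_n,b_{n+1},\ldots,b_{r-1}\}$ of $A$, and let $X_0,\ldots,X_{r-1}$ be the dual basis, so that $X_0=1^*$ and $X_i=x_i^*$.

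\emph{Step 2: expand the structure tensor.} By \Cref{Rmk: SymmetricMultTensor}\eqref{Rmk:SymmetricMultTensor:4} the symmetric structure tensor is
\[\mu_{A,s}^{(d-1)}=\sum_{|\gamma|=d}\Lambda(b^\gamma)X^{(\gamma)},\qquad b_0=1,\ b_i=x_i \text{ for } i\le n,\]
written in divided powers; the identification of coefficients with values on monomials in the basis is the divided-power convention of \Cref{ex:ArtinianGorensteinPart2}. Split the sum according to whether $\gamma$ is supported in $\{0,\ldots,n\}$. If it is not, then $X^{(\gamma)}$ is divisible by some $X_j$ with $j>n$, so these terms lie in the ideal $(X_{n+1},\ldots,X_{r-1})$; call their sum $G$. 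If it is, then $b^\gamma=x_1^{\gamma_1}\cdots x_n^{\gamma_n}$ has degree at most $d$, so $\Lambda(b^\gamma)=f_\gamma$, and these terms sum exactly to $F(X_0,\ldots,X_n)$. Thus $\mu_{A,s}^{(d-1)}=F+G$.

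\emph{Step 3: the isomorphism.} Finally, $\mu_A^{(d-1)}\cong\mu_{A,s}^{(d-1)}$ by \Cref{Rmk: SymmetricMultTensor}, which gives the claimed isomorphism.

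\emph{Main obstacle.} The delicate point is Step 1: producing a dual generator $\Lambda$ whose low-degree moments equal the dehomogenized coefficients of $F$, rather than merely some dual generator of $A$. I would argue it as follows. Apolarity gives $F\in\langle\nu_d(Z)\rangle$, i.e. $F$ is a functional on $S_d$ vanishing on $I(Z)_d$. Since $x_0$ is a nonzero divisor, $S_d/I(Z)_d\cong (R/I)_{\le d}$ via dehomogenization, and this quotient injects into $A$. So $F$ defines a functional on the image of $R_{\le d}$ in $A$, which we extend arbitrarily to $\Lambda\in A^*$. By construction $\Lambda(x^\alpha)=f_{(d-|\alpha|,\alpha)}$ for $|\alpha|\le d$.

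We then need $\Lambda$ to generate $A^*$. If a general extension fails to be a generator, minimality of $Z$ rescues us: $\operatorname{Ann}_A(\Lambda)$ would define a proper subscheme $Z'\subsetneq Z$ that is still apolar to $F$, contradicting minimality. Hence a generating extension exists, and that is the $\Lambda$ used in Step 2.
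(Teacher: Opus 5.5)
Your proposal is correct and follows the paper's proof. Both arguments choose a basis $B=\{1,x_1,\ldots,x_n,b_{n+1},\ldots,b_{r-1}\}$ of $A$, expand $\mu_{A,s}^{(d-1)}$ in the dual divided-power basis using a dual generator $\Lambda$ that extends $f=F(X_0=1)$, and observe that the monomials in $X_0,\ldots,X_n$ reproduce $F$ while all other monomials lie in $(X_{n+1},\ldots,X_{r-1})$; the paper phrases this through the restriction $\phi\colon A^*\to V$ that kills the $b_j^*$. The only difference is that you prove directly the facts the paper cites: $r\ge n+1$ from the injection $V^*\hookrightarrow A$ given by conciseness, and the existence of a dual generator extending $f$ from the minimality of $Z$ (your argument in fact shows that every extension of $f$ to $A^*$ generates $A^*$).
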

\begin{proof}
    The fact that $r\geq n+1$ is a consequence of conciseness, see \cite[Theorem 3.9]{Alessandra}. We know that $A$ is Gorenstein (\cite{JarekWer}) with dual generator $\Lambda\in R^*$ extending $f:=F(X_0=1)\in R_{\leq d}$. Now take $B=\{1,x_1,\ldots, x_n,b_{n+1},\ldots, b_{r-1}\}$ a $\K$-basis of $A$. We build the restriction $\phi:A^*\to V$ sending $\phi(1^*)=X_0$, $\phi(x_i^*)=X_i$ and $\phi(b^*)=0$ for the rest of the elements in $B$. We want to show that the form $\mu_{A,s}^{d-1}\in (\Sym(A))^*$ defined by 
    $\mu_{A,s}^{d-1}(a_1,\ldots,a_d)=\Lambda(a_1\cdots a_d)$, which in the divided power basis has the expression
    \begin{equation}\label{eq: MultTensorDivPower}
        \mu_{A,s}^{d-1}= \sum_{|\alpha|=d} \Lambda(1^{\alpha_1}\cdots b_{r-1}^{\alpha_{r-1}}) ((1^*)^{(\alpha_1)}\cdots (b_{r-1}^*)^{(\alpha_{r-1})}),
    \end{equation}
    
    satisfies $\Sym^d(\phi)(\mu_{A,s}^{d-1})=F$. We have 
    \[\Sym^d(\phi)(\mu_{A,s}^{d-1})= \sum_{|\alpha|=d} \Lambda(1^{\alpha_1}\cdots(x_n)^{\alpha_n}) ((\phi(1^*))^{(\alpha_1)}\cdots (\phi(b_{r-1}^*))^{(\alpha_{r-1})})=\]\[ \sum_{|\alpha|=d} \Lambda(1^{\alpha_1}\cdots x_n^{\alpha_n}) (X_0^{(\alpha_1)}\cdots X_n^{(\alpha_n)})=F\]
    where the last equality follows from $\Lambda$ extending $F(X_0=1)$ in degree at most $d$.  

    From \eqref{eq: MultTensorDivPower} we see that $\mu_{A,s}^{(d-1)}$ can be separated into a summand containing only monomials in $1^*,x_1^*,\ldots, x_n^*$, and the rest. Identifying $1^*$ with $X_0$, $x_i^*$ with $X_i$ and $b_j^*$, $j=n+1,\ldots,r-1$, with $X_{j}$, we get the desired expression.   
\end{proof}

\medskip

In view of this result we formalize the notion of extension by variables of a homogeneous polynomial. Let $V$ be a vector space of dimension $n+1$ and $F\in \Sym(V^*)^*$ be concise. Fix an integer $r\ge n+1$ and a vector space $W$ of dimension $r$ containing $V$.
\begin{definition}
\label{def:MT-CMG}
A length-$r$ completion of $F$ with marked generators consists of:
\begin{itemize}
\item an Artinian Gorenstein $\K$-algebra $A$ with $\dim_\K A=r$;
\item an injective linear map (marking)
\[
\iota:\ V^* \hookrightarrow A,\qquad 1_A\in \operatorname{Im}(\iota);\]
\item an identification $W\cong A^*$ such that, under the induced
restriction map $\Sym(W^*)^*\to \Sym(V^*)^*$, the symmetric iterated multiplication tensor of $A$ satisfies
\[
\mu^{(d-1)}_{A,s}\big|_{V^*}=F.
\]
\end{itemize}
We call this a \emph{variable-extension completion} of $F$.
\end{definition}

Thus, in the notation \Cref{Prop: MultTensorExtendsf}, the proposition states that the structure tensor of $A$ is a variable-extension completion of $F$ with marked generators $\iota(x_i)=[x_i]$.

\begin{remark}
\label{rmk:gauge-freedom}
Choosing bases $\{X_0,\ldots,X_n\}$ of $V$ and $\{X_0,\ldots,X_{r-1}\}$ of $W$, the identification $W\cong A^*$ is a choice of basis in $A$, hence solutions are naturally defined
up to the $GL(A)$-action. In particular, the additional coordinates
$\langle X_{n+1},\dots,X_{r-1}\rangle$ are only determined up to $GL(r-n-1)$. 
\end{remark}

\begin{remark}
    The proof of \Cref{Prop: MultTensorExtendsf} shows that, since the dual generator of a Gorenstein algebra computing the cactus rank of $F\in S_d^*$ extends the dehomogenisation of $F$, we can simply take as restriction the one induced by dehomogenisation. On the other hand, we have seen in \Cref{ex: WaringRestriction} that for smooth apolar schemes to $F$ there is also a restriction associated to the decomposition of $F$. In particular, the restriction is not unique. This phenomenon also occurs in nonreduced cases, as the next example shows:
    % For instance, if we take the local algebra in \Cref{Ex: MultTensAG} and make the affine translation $x_i\mapsto x_i-\zeta_i$ for some nonzero $(\zeta_1,\ldots,\zeta_n)\in k^n$, then we have the restriction $\phi:A^*\to V$ in \Cref{Ex: MultTensAG} translated as $\varphi\circ \phi$, where $\varphi: V\to V$ sends $X_0\to X_0-\zeta_1-\ldots - \zeta_n X_n$, and $X_i\mapsto X_i$ for $i=1,\ldots, n$. In this case, the element $1^*$ of the dual algebra is sent to the support of the generalized additive decomposition of $F$. 

    % More generally, when $A=A_1\oplus \cdots \oplus A_m$ defines an apolar scheme to $F$ associated to the generalized additive decomposition $F=\sum_i L_i^{d-k_i}G_i$ (see \cite{GAD}), then we have the restriction $\phi:A^*\to V$ that sends $e_{i}^*$ to $L_i$, where $e_i\in A_i$ and $1_A=e_1+\ldots+e_m$. This follows from \Cref{Rmk: SymmetricMultTensor}, \cite[Proposition 4]{BJPR} and the fact that $A_i=e_{i}\cdot A$. 
\end{remark}

\begin{example}
Let 
\[
F=X_{0}^{(3)}+X_{0}^{(2)}X_{1}+2\,X_{0}X_{1}^{(2)}+3\,X_{1}^{(3)}-X_{0}^{(2)}X_{2}+X_{0}X_{1}X_{2}-3\,X_{1}X_{2}^{(2)}-X_{2}^{(3)}.\]
Then $F$ admits a generalized additive decomposition $F=L_1\cdot G_1 + L_2 \cdot G_2$ for $L_1=X_0+X_1$, $L_2=X_0-X_2$, $G_1=\frac{1}{2}X_0X_1 + X_1^{(2)}- X_2^{(2)}$ and $G_2=\frac{1}{3}X_0^{(2)}-\frac{1}{3}X_0X_2 + X_1X_2 + \frac{1}{3}X_2^{(2)}$. One checks that $Z=\operatorname{Spec}(A)$ with 
    \[A=\K[x_1,x_2]/\operatorname{Ann}(\Lambda) \quad \Lambda=\left(\frac{-1}{2}x_2^2+x_1\right)(\partial)\circ \mathds 1_{(1,1,0)} + (x_2x_1+1)(\partial)\circ \mathds 1_{(1,0,-1)}.\]
    has length 7 and it is apolar to $F$. The principal $7\times 7$ subminor of the Hankel matrix of $\Lambda$ does not vanish, and therefore a possible basis of $A$ is $\{1,x_1,x_2,x_1^2,x_1x_2,x_2^2,x_1^3\}$. In this basis, the structure tensor is

\[
\begin{aligned}
\mu_{A,s}^{(2)} =&
Y_{0}^{(3)}+Y_{0}^{(2)}Y_{1}+2\,Y_{0}Y_{1}^{(2)}+3\,Y_{1}^{(3)}-Y_{0}^{(2)}Y_{2}+Y_{0}Y_{1}Y_{2}-3\,Y_{1}Y_{2}^{(2)}-\\
&Y_{2}^{(3)}+2\,Y_{0}^{(2)}Y_{3}+3\,Y_{0}Y_{1}Y_{3}+ \cdots 
-Y_{5}Y_{6}^{(2)}+9\,Y_{6}^{(3)}
\end{aligned}
\]
As expected from \Cref{Prop: MultTensorExtendsf}, under the substitution $Y_0\mapsto X_0, Y_1\mapsto X_1, Y_2\mapsto X_2, Y_3, \ldots, Y_6\mapsto 0$, $\mu_{A,s}^{(2)}$ is mapped to $F$. This is the restriction corresponding to dehomogenisation. We can obtain a different expression of the multiplication tensor and a different explicit restriction. We observe that 
\[A\cong \K[x_1,x_2]/\operatorname{Ann}(-X_2^{(2)}+X_1) \bigoplus \K[x_1,x_2]/\operatorname{Ann}(X_2X_1+1)=\]\[\K[x_1,x_2]/(x_1^2, x_1x_2, x_2^2+x_1) \bigoplus \K[x_1,x_2]/(x_1^2,x_2^2).\]
A basis of the first algebra is $B_1=\{1,x,y\}$, while $B_2=\{1,x,y,y^2\}$ is a basis of the second. Then we can write the structure tensor of $A$ in the corresponding dual basis as 
\[\mu_{A,s}^{(2)}=W_{0}^{(2)}W_{1}-W_{0}W_{2}^{(2)} +  Z_{0}^{(3)}+Z_{0}Z_{1}Z_{2}+Z_{0}^{(2)}Z_{3}.\]
This is a direct sum expression, where the first summand in the $W$ (respectively $Z$) variables is the multiplication tensor of the first (respectively second) algebra. We take the restriction $\phi=\phi_1+\phi_2: A^*\to V$ with $\phi_1$ defined as $1^*_{B_1}\mapsto X_0+X_1, x^*_{B_1}\mapsto X_1, y^*_{B_1}\mapsto X_2, (y^2)^*_{B_1}\mapsto 0$, and $\phi_2$ as $1^*_{B_2}\mapsto X_0-X_2, x^*_{B_2}\mapsto X_1, y^*_{B_2}\mapsto X_2$. Then $F$ is the image of $\mu_{A,s}^{(2)}$ under $\phi$, and the unit element of each summand of the direct sum of algebras is sent to the support of the generalized additive decomposition.

\end{example}
% \begin{example}
% Take $A=\K[x_1,x_2]/\operatorname{Ann}(X_1^{(2)} + X_1^{(2)}X_2)=k[x_1,x_2]/(x_1^3,x_2^2)$ as in \ref{Ex: MultTensorLocal}, and make the change of coordinates $\varphi$ in $\mathbb PV$ induced by $x_0\mapsto x_0+x_1-2x_2$, $x_1\mapsto x_1$ and $x_2\mapsto x_2$ in $V^*$. By [OurLocalPaper], the scheme $\operatorname{Spec}(A)$ is transformed into $\Tilde{Z}=\operatorname{Spec}(\Tilde{A})$, with 
% \[\Tilde{A}=k[y,z]/\operatorname{Ann}(\Lambda) \quad \quad \Lambda = \left(\frac{1}{2}\partial^2/\partial y^2 + \frac{1}{3}\partial^3/\partial y^2z\right)\circ \mathds{1}_{(-1,2)}, \]
% where $\mathds{1}_{(-1,2)}\in k[y,z]^*$ is the form of evaluation at $(-1,2)$. Then $F=X_0^{(2)}X_1^{(2)} + X_0X_1^{(2)}X_2$ is transformed into 
% \[\Tilde{F}:=\varphi^*(F)=(X_0+X_1-2X_2)\cdot (),\] $\Tilde{Z}$ is apolar to $\Tilde{F}$ and we can compose the restriction of \ref{Ex: MultTensorLocal} $\phi: A^*\to V$ with $\varphi$, obtaining a restriction $\Tilde{\varphi}:A\to V$ that sends $1$ to $L$.   
% \end{example}

\begin{remark}
    By \Cref{Rmk:SymmetricMultTensor:3} and \Cref{prop: ApolarIsMultTensor}, a tensor with minimal cactus rank is a direct sum if and only if the cactus rank is not computed by a local scheme. 
    For example, in \cite[Corollary 1.14]{BBKT_direct_sums} it is shown that if $F\in S_d^*$ and $\operatorname{Ann}(F)$ does not contain nonzero elements of degree 2, then $F$ is not a direct sum. Thus, if in addition $F$  has minimal cactus rank then the cactus rank is computed by a local scheme. 

    %In \cite{BBKT_direct_sums}, it is shown that, since the apolar algebra of a generic polynomial is compressed, a generic polynomial is not a direct sum. By \Cref{Rmk: SymmetricMultTensor}, this means that if we have a generic polynomial with minimal cactus, its local cactus rank and cactus rank agree. Moreover, by \cite[Corollary 1.14]{BBKT_direct_sums}, if $F$ has minimal cactus rank and $\operatorname{Ann}(F)$ does not contain nonzero elements of degree 2, then the cactus rank is also computed by a local scheme. 
\end{remark}

% \begin{remark}
%     If $F\in S_d^*$ has minimal cactus rank, then the scheme revealing the cactus rank is unique (\cite[$\S 9$]{MinBrk}). One easily sees that, in this case, $F$ is isomorphic to the multiplication tensor of $R/\operatorname{Ann}(f)$ for $f=F(X_0=1)$. However, it is not true in general that the minimal apolar scheme is $\operatorname{Spec}(\operatorname{Cen}_F)$, that is, $F$ need not be the multiplication tensor of the algebra that computes its rank. For example, let 
%     \[F=144\,X_{0}^{3}+140\,X_{0}^{2}X_{1}+20\,X_{0}X_{1}^{2}+150\,X_{0}^{2}X_{2}+140\,x_{0}X_{2}^{2}+360\,X_{2}^{3}+140\,X_{0}^{2}X_{3}+360\,X_{0}X_{2}X_{3}+120\,X_{2}^{2}X_{3}+120\,X_{0}X_{3}^{2}+360\,X_{0}^{2}X_{4}+120\,X_{0}X_{2}X_{4}+20\,X_{0}^{2}X_{5}     \]
%     and let $f=F(X_0=1)$. One checks that it has minimal cactus rank (either with \Cref{alg:minimalcactus} or \cite[Example 4.6]{GAD}). The algebra $k[x_1,\ldots,x_5]/\operatorname{Ann}(f)$ has dimension 7, so it does not compute the cactus rank of $F$. Instead, $k[x_1,\ldots,x_5]/\operatorname{Ann}(120x_4^{(4)}+f)$ has length 6 and it is apolar to $f$, so it is the unique scheme of length 6 that defines an apolar scheme to $F$. 
%     \end{remark}

\section{Recovering the cactus algorithm}\label{Sec:Recovering the cactus algorithm}

Fix $F\in S_d^*= \K_{\mathrm{dp}}[X_0,\ldots,X_n]$ a concise symmetric tensor with cactus rank $r>n+1$, and look for a minimal apolar scheme $\operatorname{Spec}(A)\subseteq \mathbb PV$ lying in the first affine chart $x_0\neq 0$, and therefore $A$ is an Artinian Gorenstein algebra that admits a presentation  $A=\K[x_1,\ldots,x_n]/\operatorname{Ann}(\Lambda)$ for some $\Lambda\in R^*=\K[x_1,\ldots,x_n]^*$. Assume we know that $B=\{1,x_1,\ldots,x_n,b_{n+1},\ldots,b_{r-1}\}$ is a monomial basis of $A$.
\medskip
Let $\mu_{A,s}^{(d-1)}\in \K_{\mathrm{dp}}[1^*,x_1^*,\ldots,x_n^*,b_{n+1}^*,\ldots,b_{r-1}^*]_d$ be the $(d-1)$-th iterated multiplication tensor of $A$. We can identify $1^*$ with $X_0$, $x_i^*$ with $X_i$ and for $j=n+1,\ldots,r-1$, $b_j^*$  with $X_{j}$. After this identification we can see $\mu_{A,s}^{(d-1)}\in \K_{\mathrm{dp}}[X_0,\ldots,X_n,\ldots,X_{r-1}]_d$.  Let $\mathcal{M}_R$ be the set of monomials in $R_{\leq d}$, and $\mathcal{M}_S$ the corresponding monomials in $\K_{\mathrm{dp}}[X_0,\ldots,X_n,X_{n+1},\ldots,X_{r-1}]_d$. We write $\mu_{A,s}^{(d-1)}$ as 
\[\mu_{A,s}^{(d-1)}= \sum_{m\in \mathcal{M}_S} \lambda_m m + \sum_{m\not \in \mathcal{M}_S} \lambda_m m\]
Remark that the coefficients $\lambda_m$ of the first summand are determined by $F$. Indeed, $\lambda_m=\Lambda(m)$ and we know that $\Lambda|_{R\leq d}=F(X_0=1)$. Thus, only the coefficients of the second summand are not known, and we leave them as variables. We denote by $\mu_{A,s}^{(d-1)}(\lambda)$ the polynomial with these unknown coefficients.
\medskip

From the tensor $\mu_{A,s}^{(d-1)}(\lambda)$ we can build the matrices 
\begin{equation}\label{eq: MatMultLambda}
    \Tilde{M}_{x_i}^t(\lambda) \quad i=1,\ldots,n
\end{equation}
of multiplication by $x_i$ in $A$ as in \Cref{rmk: MultMatricesFromTensor}, parametrized by $\lambda_m$. We compare them to those in \Cref{Section:The cactus algorithm}, $M_{x_i}(h)$, which are obtained from Hankel operators. 

\begin{claim}\label{clm:identification-mult-matrices}
Under the assumptions above, the symbolic multiplication matrices
\[
\widetilde{M}_{x_i}(\lambda),\quad i=1,\ldots,n,
\]
constructed from the tensor $\mu_{A,s}^{(d-1)}(\lambda)$, coincide with the symbolic multiplication matrices
\[
M_{x_i}(h),\quad i=1,\ldots,n,
\]
constructed via the Hankel operators in \Cref{Section:The cactus algorithm}, after a canonical bijection between their sets of variables.

More precisely, there exists a bijection
\[
\Psi: H \longrightarrow \Delta
\]
between the moment variables $h_\alpha$ appearing in the matrices $M_{x_i}(h)$ and the undetermined coefficients $\lambda_m$
appearing in $\widetilde{M}_{x_i}(\lambda)$, such that
\[
M_{x_i}(\Psi(h))=\widetilde{M}_{x_i}(\lambda)
\quad \text{for all } i=1,\ldots,n.
\]
\end{claim}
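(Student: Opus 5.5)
Both constructions produce each matrix as the same linear expression in the same list of numbers. That list is the values $\Lambda(b b')$ and $\Lambda(x_i b b')$ for $b,b'\in B$. So the plan is to show that the symbolic Hankel blocks and the symbolic tensor blocks agree entry by entry after renaming variables. The equality of multiplication matrices then follows formally from \eqref{eq: MultMatrixHankel} and \eqref{eq:multMatricesFromTensor}.

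\emph{Step 1: compare the entries.} In the Hankel formulation, the relevant matrices are $H_\Lambda^B=(\Lambda(bb'))_{b,b'\in B}$ and $H_{x_i\aprod\Lambda}^B=(\Lambda(x_ibb'))_{b,b'\in B}$. Each entry is $\Lambda(x^\alpha)$ for a monomial $x^\alpha$ of the form $bb'$ or $x_ibb'$, since $B$ consists of monomials. By \eqref{eq: extenionOfF}, this entry equals the known coefficient $f_\beta$ if $|\alpha|\le d$, and the unknown $h_\alpha$ otherwise.

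In the tensor formulation, by \Cref{rmk: MultMatricesFromTensor} the relevant entries are the coefficients of $1^{d-2}\aprod\mu_{A,s}^{(d-1)}$ and $1^{d-3}x_i\aprod\mu_{A,s}^{(d-1)}$ at $(b,b')$. By \eqref{eq: MultTensorOfAG}, these are the coefficients $\lambda_m$ of the monomials $m=(1^*)^{(d-2)}b^*b'^*$ and $m=(1^*)^{(d-3)}x_i^*b^*b'^*$, with $\lambda_m=\Lambda(bb')$ and $\lambda_m=\Lambda(x_ibb')$ respectively. So both matrix pairs are indexed by the same pairs $(b,b')$, and the entries are attached to the same monomials of $R$.

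\emph{Step 2: define $\Psi$.} A coefficient $\lambda_m$ counts as known exactly when $m\in\mathcal M_S$, that is, when $m$ involves only $X_0,\dots,X_n$. In that case $\lambda_m=\Lambda(x^\alpha)$ with $|\alpha|\le d$, which is the corresponding $f_\beta$. This matches the Hankel side.

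For an unknown $\lambda_m$, where $m$ involves some $b_j^*$, I would set $\Psi(h_\alpha)=\lambda_m$ whenever the entry $\Lambda(x^\alpha)$ sits at the same matrix position. The main obstacle is showing that this is well defined and bijective. Two points need checking.

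First, the known/unknown dichotomy must agree. We need $|\alpha|\le d$ if and only if the tensor coefficient is determined by $F$. A product $x_ibb'$ may have degree $\le d$ even when $b$ is some $b_j$ of degree $\ge 2$. Then $\lambda_m$ is nominally a variable while the Hankel entry is known. I would resolve this by reading "undetermined coefficient" as one not forced by $\Lambda|_{R_{\le d}}=f$. Equivalently, I would impose on $\mu(\lambda)$ all linear identities $\lambda_m=\lambda_{m'}$, or $\lambda_m=f_\beta$, coming from $\Lambda(\prod)=\Lambda(x^\alpha)$. This is the content of the symmetric tensor being $\Lambda$ of a product, and it uses that $B$ consists of monomials.

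Second, bijectivity. Several positions may carry the same $h_\alpha$ (Hankel structure), and correspondingly several tensor monomials carry the same $\lambda$-class. I would therefore define $\Psi$ on the classes of tensor coefficients modulo these identifications, mapping each class to the single monomial $x^\alpha$ it evaluates. Injectivity and surjectivity then hold because both sides are indexed by the set $\{x^\alpha : x^\alpha\in B\cdot B\cup x_iB\cdot B,\ |\alpha|>d\}$.

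\emph{Step 3: conclude.} With the entries equal, $H^B_{\Lambda}(\Psi(h))$ coincides with the quadric matrix $M_{1^{d-2}\aprod\mu}(\lambda)$, and similarly $H^B_{x_i\aprod\Lambda}$ coincides with $M_{1^{d-3}x_i\aprod\mu}$. Solving \eqref{eq: MultMatrixHankel}, both sides equal $H_{x_i\aprod\Lambda}^B(H_\Lambda^B)^{-1}$, which is formula \eqref{eq:multMatricesFromTensor}. This is an identity of matrices over the rational function field. Here the identification of $b_j^*$ with the dual basis, which is the change of basis in \Cref{rmk: MultMatricesFromTensor}, is used consistently on both sides. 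This gives $M_{x_i}(\Psi(h))=\widetilde M_{x_i}(\lambda)$, with equal invertibility loci.
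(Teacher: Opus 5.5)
Your proposal is correct and follows essentially the same route as the paper. Both arguments observe that the entries on either side are the values $\Lambda(bb')$ and $\Lambda(x_ibb')$. Both introduce one unknown per monomial of $R$ of degree $>d$ occurring in these products, imposing $\lambda_m=\lambda_{m'}$ whenever $m,m'$ correspond to the same element of $R$. Both then conclude via \eqref{eq: MultMatrixHankel} and \eqref{eq:multMatricesFromTensor}.

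The subtlety you flag is resolved in the paper exactly as you propose. It concerns tensor monomials that involve some $X_j$ with $j>n$ but whose associated monomial in $R$ has degree $\le d$. The paper fixes such coefficients by the convention $\lambda_m=F(x_0^{d-\deg m'}m')$, which is stated explicitly in \Cref{thm: Summary}.
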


\begin{proof}
By construction (see \Cref{rmk: MultMatricesFromTensor}), the entries of the matrices $\widetilde{M}_{x_i}(\lambda)$ are given by
values of the linear functional $\Lambda$ on monomials of the form
\[
\Lambda(b_\ell b_j),\qquad \Lambda(x_i b_\ell b_j),
\]
with $i=1,\ldots,n$ and $j,\ell=1,\ldots,r$.
Whenever $\deg(b_\ell b_j)>d$ or $\deg(x_i b_\ell b_j)>d$, these values are not determined by $F$ and therefore appear as independent
variables $\lambda_m$, where $m$ is an element in $\K[x_0,\ldots,x_{r-1}]$, such that the corresponding element in $R$ is $b_\ell b_j$ or $x_i b_\ell b_j$. Moreover, $\lambda_m=\lambda_{m'}$ if $m,m'$ correspond to the same element in $R$.

On the other hand, by \eqref{eq: extenionOfF} and \eqref{eq: MultMatrixHankel}, the symbolic matrices $M_{x_i}(h)$ are obtained from
the same products $b_\ell b_j$ and $x_i b_\ell b_j$, whose evaluations are encoded by the moment variables $h_\alpha$ whenever the
corresponding degree exceeds $d$.

Thus, both constructions introduce one independent variable for each monomial $m\in R$ with $\deg(m)>d$ that appears in the products
defining the multiplication matrices. This yields a canonical bijection
\begin{equation}\label{eq: Psi bijection of variables}
\Psi:H\to \Delta,
\qquad h_{\alpha} \mapsto \lambda_m \ \text{, with } m \text{ corresponding to } x^\alpha,
\end{equation}
which preserves the position of each variable inside the matrices.

With this identification, the two families of matrices have identical numerical entries and identical symbolic entries, and hence
\[
M_{x_i}(\Psi(h))=\widetilde{M}_{x_i}(\lambda)
\quad \text{for all } i=1,\ldots,n,
\]
as claimed.\qedhere
\end{proof}

%We claim that, as symbolic matrices, they are equal, meaning that they have the same numerical entries and a bijective correspondence between their variables. 
%
%\medskip

%To see this, let $H$ be the set of moment variables $h_\alpha$ appearing in $M_{x_i}(h)$, $i=1,\ldots n$, and $\Delta$ the set of variables $\lambda_m$ appearing as unknowns of $\Tilde{M}_{x_i}(\lambda)$, $i=1,\ldots n$. We observe that by \Cref{rmk: MultMatricesFromTensor}, the entries of $\Tilde{M}_{x_i}$ are determined by $\Lambda(b_lb_j)$ and $\Lambda(x_ib_lb_j)$, $i=1,\ldots,n$ and $l,j=1,\ldots,r$, which are coefficient variables $\lambda_m$ if $\deg x_ib_lb_j>d$ and $\deg b_lb_j>d$, respectively. By \eqref{eq: extenionOfF} and \eqref{eq: MultMatrixHankel}, the variables in $H$ are determined in the same way. Hence, the map
%\begin{equation}\label{eq: Psi bijection of variables}
%    \Psi:H\to \Delta \quad h_{\alpha} \mapsto \lambda_m \text{ for } m=x^\alpha
%\end{equation}
%is a bijection, and moreover we also have 
%\[M_{x_i}(\Psi(h))=\Tilde{M}_{x_i}(\lambda) \quad  i=1,\ldots,n.\]

Therefore, by imposing the commutation of the matrices $\Tilde{M}_{x_i}$ parametrized by $\lambda_m$ we recover the cactus algorithm in \Cref{Section:The cactus algorithm}. 

\begin{remark}\label{rmk: det Hankel vs contraction}
    Keeping the same notation, by \Cref{rmk: MultMatricesFromTensor} the matrix $H_{\Lambda}^B=(\Lambda(bb'))_{b,b'\in B}$ is the matrix associated to $1^{d-2}\aprod \mu_{A,s}^{(d-1)}$. In particular, the condition $\det H_{\Lambda}^B\neq 0$ is equivalent to $1^{d-2}\aprod \mu_{A,s}^{(d-1)}$ being concise. 
\end{remark}

% \begin{claim}\label{rmk: det Hankel vs contraction}
%     Let $\Lambda\in R^*=\K[x_1,\ldots,x_n]$ and let $B$ be a set of monomials in $R$ containing $1$ and $V:=\langle B\rangle$. Let $G:V\times \cdots \times V\to k$ be a tensor defined by $G(a_1,\ldots,a_d)=\Lambda(a_1\cdots a_d)$. Then the matrix $H_{\Lambda}^B=(\Lambda(b,b'))_{b,b'\in B}$ is the matrix associated to $1^{d-2}\aprod G$. In particular, the condition $\det H_{\Lambda}^B\neq 0$ is equivalent to $1^{d-2}\aprod G$ being concise. Moreover, the multiplication matrices of the algebra can be computed as  
%     \begin{equation}
%     \widetilde{M}_{x_i}(\lambda)^t\cdot (1^{d-2} \aprod G)=(1^{d-3}x_i \aprod G)
% \end{equation}

%     \end{claim}

% \begin{proof}
%     For every $b,b'\in B$, $1^{d-2}\aprod \Lambda(b,b')=\Lambda(1,\ldots,1,b,b')=\Lambda(b,b')$, so the two matrices agree. The formula for the multiplication matrices follows from \eqref{eq: MultMatrixHankel}.

% \end{proof}

% \begin{proof}
%     The Hankel matrix of $\Lambda$ restricted to $V$ is invertible is the map $a\mapsto a\aprod \Lambda$ is injective, if and only if $a\mapsto a\aprod (1\aprod \Lambda)$ is injective, if and only $G(1,1,1,\ldots,1,a,-)$ is non zero for all $a\in V$, if and only if $1^{d-2}\aprod G$ is concise.    

% \end{proof}

We can compile all the previous results as follows:

\begin{theorem}\label{thm: Summary}
    Let $F\in S_d^*= \K_{\mathrm{dp}}[X_0,\ldots,X_n]_d$ be a concise homogeneous polynomial. Let $B=\{1,x_1,\ldots,x_n,\allowbreak b_{n+1},\ldots,b_{r-1}\}$ be a complete staircase (see \Cref{Def: CompleteStaircase}) of $r\geq n+1$ monomials, and identify $1^*$ with $X_0$, $x_i^*$ with $X_i$ and $b_j^*$ with $X_{j}$ for $j=n+1,\ldots,r-1$. Let
    \[G= F+ \sum_{m}\lambda_m m \in \K_{\mathrm{dp}}[X_0,\ldots,X_n,\ldots,X_{r-1}]_d,\]
    where $m$ runs over all monomials that contain at least one of $X_j$, $j=n+1,\ldots ,r-1$. Set $\lambda_m=F(x_0^{d-\deg m'} m')$ if $\deg m'\leq d$, where $m'$ is the associated monomial of $m$ in $\K[x_1,\ldots,x_n]$. Let $\Tilde{M}_{x_i}(\lambda)$ be the parametrized multiplication matrix by $x_i$ \eqref{eq: MatMultLambda} obtained from $G$. Then the following are equivalent:
    \begin{enumerate}[label=\roman*)]
        \item\label{item:i} There exists an apolar scheme defined by an Artinian algebra with basis $B$. In particular, $F$  has cactus rank at most $r$.
        \item\label{item:ii} There exist numerical values of the unknowns $\lambda_m$ such that $G$ is a multiplication tensor; equivalently, after the conciseness condition is imposed, G is centroid abundant and has non-vanishing Hessian.
        \item\label{item:iii} For some values of $\lambda_m$, the matrices $\Tilde{M}_{x_i}(\lambda), \Tilde{M}_{x_j}(\lambda)$ commute for all $i,j=1,\ldots,n$ and $x_0^{d-2}\aprod G$ is concise.  
    \end{enumerate}
    \end{theorem}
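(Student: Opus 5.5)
We prove the cycle of implications \ref{item:i}$\Rightarrow$\ref{item:ii}$\Rightarrow$\ref{item:iii}$\Rightarrow$\ref{item:i}. Throughout, $G(\lambda)$ is read as the candidate tensor $\mu_{A,s}^{(d-1)}$ in the dual basis of $B$, with $X_0=1^*$, $X_i=x_i^*$ and $X_j=b_j^*$.

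\emph{\ref{item:i}$\Rightarrow$\ref{item:ii}.} Let $Z=\operatorname{Spec}(A)$ be apolar with basis $B$. As in the proof of \Cref{Prop: MultTensorExtendsf}, $A=R/\operatorname{Ann}(\Lambda)$ with $\Lambda$ extending $f=F(X_0=1)$; by \cite{JarekWer} we may take it Gorenstein and in the chart $x_0\neq 0$. Set $\lambda_m:=\Lambda(m')$. Then $G(\lambda)=\mu_{A,s}^{(d-1)}$ by \eqref{eq: MultTensorDivPower}. The prescribed values $\lambda_m=F(x_0^{d-\deg m'}m')$ for $\deg m'\le d$ are consistent with this choice precisely because $\Lambda$ extends $f$. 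The equivalent reformulation in \ref{item:ii} is then \Cref{Rmk: CharactMultTensors} combined with \Cref{Rmk: SymmetricMultTensor}\eqref{Rmk:SymmetricMultTensor:1},\eqref{Rmk:SymmetricMultTensor:2}, which apply since $G$ is concise.

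\emph{\ref{item:ii}$\Rightarrow$\ref{item:iii}.} If $G(\lambda)$ is a multiplication tensor of a Gorenstein algebra $A'\cong\operatorname{Cen}_G$, then the quadric $x_0^{d-2}\aprod G$ has matrix $(\Lambda'(bb'))$. This matrix is invertible because $G$ is concise and the pairing $(a,b)\mapsto\Lambda'(ab)$ is nondegenerate. Hence \Cref{rmk: MultMatricesFromTensor} applies, and formula \eqref{eq:multMatricesFromTensor} produces the matrices of multiplication by the elements of $A'$ corresponding to $X_i$ under the identification $W\cong A'^*$. These are multiplication operators in a commutative algebra, so they commute. For this step I must check that the identification of $X_0$ with the unit is legitimate. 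I would argue this by noting that the symmetric tensor determines $\Lambda'$ only after a unit is chosen, and that the marked element $1^*$ plays that role because of how the contraction formulas in \Cref{rmk: MultMatricesFromTensor} are set up.

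\emph{\ref{item:iii}$\Rightarrow$\ref{item:i}.} This is the main step. By \Cref{clm:identification-mult-matrices}, under $\Psi$ the matrices $\widetilde M_{x_i}(\lambda)$ coincide with the Hankel matrices $M_{x_i}(h)$. By \Cref{rmk: det Hankel vs contraction}, conciseness of $x_0^{d-2}\aprod G$ is equivalent to $\det H^B_{\Lambda(h)}\neq0$. We are therefore exactly in the successful case of \Cref{alg:cactus}. The remaining task is to invoke the flat-extension/border-basis theorem: since $B$ is a complete staircase (hence connected to $1$) and the $M_{x_i}$ commute, $B$ is a basis of $A=R/I$, where $I$ is generated by the border relations \cite{NormalForm,Bernard}. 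The functional $\Lambda$ then extends uniquely to $R^*$ with $I=\operatorname{Ann}(\Lambda)$ and $\Lambda|_{R_{\le d}}=f$. The Hankel relation \eqref{eq: MultMatrixHankel} is what guarantees that the extension is compatible with the prescribed moments.

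The delicate point is that the parametrization fixes only the moments appearing in $B\cdot B$ and $x_iB\cdot B$. I have to show that every monomial of degree $\le d$ lies in this set, or else is forced by the commutation relations, so that $\Lambda$ truly restricts to $f$. I would handle this by expressing an arbitrary monomial $x^\alpha$ with $|\alpha|\le d$ through normal forms, $\Lambda(x^\alpha)=\langle \Lambda, M_{x^\alpha}1\rangle$, and then matching this against $F$ by comparison of degrees. If needed, I would include all moments of degree $\le d$ in the parametrization, as \eqref{eq: extenionOfF} already does. Finally, \Cref{prop: ApolarIsMultTensor} and \Cref{Corollary: ApolarityAndRestriction} give that $\operatorname{Spec}(A)$, embedded in $x_0\neq0$ and homogenized, is apolar to $F$ and has length $r$.
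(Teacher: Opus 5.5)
Your cycle (i)$\Rightarrow$(ii)$\Rightarrow$(iii)$\Rightarrow$(i) is organized differently from the paper, which gets (i)$\Leftrightarrow$(ii) and (i)$\Leftrightarrow$(iii) from earlier results and \cite{Alessandra} and only sketches (ii)$\Leftrightarrow$(iii) directly. Your (i)$\Rightarrow$(ii) is the paper's argument via \Cref{Prop: MultTensorExtendsf}. It needs the apolar algebra with basis $B$ to be Gorenstein, which the paper also assumes tacitly; passing to a Gorenstein subscheme changes the length and possibly the basis. The real problem is that the two points you flag are genuine gaps, and your proposed fixes cannot close them, because both implications fail for the statement as written. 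The paper's sketch passes over the same two points: it asserts $G=\mu^{(d-1)}_{A,s}$ for some $A=R/\operatorname{Ann}(\Lambda)$ with basis $B$, and that every $b\in B$ gives an element of $\operatorname{Cen}_G$.

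\emph{(ii)$\Rightarrow$(iii).} Being a multiplication tensor is an up-to-isomorphism notion, and nothing forces the basis vector dual to $X_0$ to go to the unit. Suppose $\psi\colon W^*\to A'$ realizes $G\cong\mu^{(d-1)}_{A',s}$, and set $e:=\psi(1)$. Then $x_0^{d-2}\aprod G$ is the form $(w,w')\mapsto\Lambda'(e^{d-2}\psi(w)\psi(w'))$, which is nondegenerate iff $e$ is a unit. Your claim that its matrix is $(\Lambda'(bb'))$ is exactly the missing step.
\begin{itemize}
\item Counterexample: take $n=1$, $B=\{1,x_1\}$, $d\ge3$, and $F=G=X_0^{(d)}+X_1^{(d)}$. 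This is the multiplication tensor of $\K\times\K$, with $X_0\leftrightarrow e_1^*$. It is also centroid abundant with Hessian $X_0^{(d-2)}X_1^{(d-2)}\neq0$, so the ``equivalently'' clause of (ii) holds too.
\item Nevertheless $x_0^{d-2}\aprod G=X_0^{(2)}$ is degenerate, so (iii) fails. So does (i), since $(x_1^2-ax_0x_1-bx_0^2)\aprod F=X_1^{(d-2)}-bX_0^{(d-2)}\neq0$.
\item What is true: if $e$ is a unit, then $\widetilde M_{x_i}$ is multiplication by $e^{-1}\psi(x_i)$, computed from the dual generator $\Lambda'(e^{d-2}\,\cdot\,)$, so the matrices commute.
\end{itemize}
You must therefore either read (ii) as ``$G=\mu^{(d-1)}_{A,s}$ in the basis dual to $B$, with $X_0=1^*$'', in which case your argument is complete, or add conciseness of $x_0^{d-2}\aprod G$ to (ii).

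\emph{(iii)$\Rightarrow$(i).} The matrices $\widetilde M_{x_i}$ only use the coefficients of $x_0^{d-2}\aprod G$ and $x_0^{d-3}x_i\aprod G$. These are the values of $\Lambda$ on $B\cdot B^+$, where $B^+=B\cup x_1B\cup\cdots\cup x_nB$. Commutation and $\det H^B_\Lambda\neq0$ give, by the border-basis/flat-extension theorem, a unique $\widetilde\Lambda$ that agrees with $\Lambda$ on $B\cdot B^+$ and has $B$ as a basis of $R/\operatorname{Ann}(\widetilde\Lambda)$.
\begin{itemize}
\item For $x^\alpha\notin B\cdot B^+$ one has $\widetilde\Lambda(x^\alpha)=\widetilde\Lambda(\mathrm{NF}_B(x^\alpha))$, which is computed from data not containing $f_\alpha$. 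No normal-form or degree comparison can therefore yield $\widetilde\Lambda(x^\alpha)=f_\alpha$.
\item Keeping the degree-$\le d$ moments in the parametrization adds no equation, since $\widetilde M_{x_i}$ does not depend on them.
\item Counterexample: take $n=1$, $B=\{1,x_1\}$, $d\ge4$, and $F=X_0^{(d)}+X_0^{(d-2)}X_1^{(2)}$. Then (iii) holds trivially: there is a single matrix and $x_0^{d-2}\aprod G=X_0^{(2)}+X_1^{(2)}$ is concise.
\item In that example $\widetilde M_{x_1}$ defines $\K[x_1]/(x_1^2-1)$, whose functional has $\widetilde\Lambda(x_1^4)=1\neq0=f(x_1^4)$. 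Indeed $\operatorname{Ann}(F)_2=0$ and $x_1^3\in\operatorname{Ann}(F)$, so $F$ has cactus rank $3$.
\end{itemize}
A correct (iii) must also require that $\widetilde\Lambda$ agree with $f$ on $R_{\le d}$. Equivalently, after specializing $\lambda$, every coefficient of $G$ must be a moment of $\widetilde\Lambda$, so that $G=\mu^{(d-1)}_{A,s}$. This holds automatically when every monomial of degree $\le d$ lies in $B\cdot B^+$. Under that condition, your route through the identification Claim and \Cref{rmk: det Hankel vs contraction} goes through.
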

  
\begin{proof}
\Cref{item:i} $\iff$ \Cref{item:ii} follows from \Cref{Rmk: SymmetricMultTensor}, \Cref{prop: ApolarIsMultTensor}, \Cref{Thm: Restriction} and \Cref{Prop: MultTensorExtendsf}. 

\Cref{item:i} $ \iff$ \cref{item:iii} is \cite[$\S$ 6.1]{Alessandra} together with \Cref{rmk: det Hankel vs contraction}. 

Even if it is not necessary, we like to give also a clarifying proof of \Cref{item:ii} $\iff$ \Cref{item:iii} in the language of multiplication tensors.

Assume \Cref{item:ii}. Then $G=\mu_{A,s}^{(d-1)}$ for some Artinian Gorenstein algebra $A=\K[x_1,\ldots,x_n]/\operatorname{Ann}(\Lambda)$ with basis $B$. Hence, by the previous discussion the coefficient of the monomial $m$ in $G$ is $\Lambda(m')$, and the matrix $\Tilde{M}_{x_i}$ correspond to the multiplication by $x_i$ in $A$. Since $\operatorname{Cen}_G\cong A$ is a commutative algebra and $(x_i,\ldots,x_i)\in \operatorname{Cen}_G$, the matrices $\Tilde{M}_{x_i}(\lambda)$ and $\Tilde{M}_{x_j}(\lambda)$ commute for all $i,j$. Moreover, $A$ is generated by $1$ as a $A\cong \operatorname{Cen}_G$-module and $G$ is concise, hence by $\operatorname{Cen}_G$ multilinearity the flattening $\mu_{A,s}^{(d-1)}(1,\ldots,1,-,-)$ is concise, i.e., $x_0^{d-2}\aprod G$ is concise. 

Conversely, if $\lambda_0$ is a set of solutions for the commutation of the matrices $\Tilde{M}_{x_i}(\lambda)$, then by \cite[Theorem 4.2]{Bernard} the data $(B, \{\Tilde{M}_{x_i}(\lambda_0)\}_i)$ defines an Artinian Gorenstein algebra $A$ of length $r$, that admits a presentation $A=\K[x_1,\ldots,x_n]/\operatorname{Ann}(\Lambda)$ with basis $B$, for some $\Lambda\in \K[x_1,\ldots,x_n]^*$. Thus, every $b\in B$ defines an endomorphism in $A^*$ in the centroid of $G$, so $G$ is centroid abundant. Finally, $x_0^{d-2}\aprod G$ concise implies $G$ is concise and generic at every coordinate, so by \cite[Proposition 5.3]{MinBrk} $G$ is a multiplication tensor. 
\begin{comment}
Non-vanishing Hessian is equivalent to require that there exists some $a\in A$ such that the map 
        $a\aprod \mu_{A,s}:A\times\cdots\times A\to k$
        is non degenerate. By centroid multilinearity, this reduces to $$A\times A\to k \quad (b,b')\mapsto (a\aprod \Lambda)(bb')=(bb'\aprod \Lambda)(a)$$
        Then $A$ is cyclic $A^*$ module generated by $a$, but since $A$ is Gorenstein with dual generator $\Lambda$, we have $a=1$. That is, if $B$ is a basis of $A$, we want 
        \[\det((bb'\aprod \Lambda(1))_{b,b'\in B})=\det((\Lambda(bb'))_{b,b'\in B})\neq 0\]
        Which is precisely the condition $\det H_{\Lambda}^{B,B}\neq 0$. By \cite{Bernard}, this is equivalent to $B$ being a basis of $A$.    
    \begin{itemize}
        \item \textbf{Imposing $\det H_{\Lambda}^{B,B}\neq 0$ is equivalent to non-vanishing Hessian:} 
        \item  \textbf{Commutation of matrices is equivalent to centroid abundant}
        With the notation of the paragraph above, if the matrices $M_{x_i}$ commute then $B$ is a $k$-basis of $A$, and therefore for every $b\in B$, the element $(b^t,\ldots,b^t,b^t)$ is an element of the centroid (see Remark). Conversely, $\operatorname{Cen_T}$ is centroid abundant, then
        \[\{(b^t,\ldots,b^t,b^t) \; | \; b\in B\}\]
        is a set of linearly independent elements of the centroid. Since $\operatorname{Cen}_T$ is a commutative algebra, $(x_i^t,\ldots,x_i^t,x_i)$ and $(x_j^t,\ldots,x_j^t,x_j)$ commute for all $i,j$, that is the multiplication matrices commute.
    \end{itemize}
\end{comment}
\end{proof}

\begin{example}\label{ex:hankel-vs-mult}
Let $k$ be a field of characteristic $> 3$ and set
\[
S=\K[x_0,x_1,x_2],\qquad R=\K[x,y]\ \text{with}\ x=\frac{x_1}{x_0},\ y=\frac{x_2}{x_0}.
\]
Consider the cubic (in divided powers notation)
\[
F \;=\; 4\,X_0^{(3)}+4\,X_0^{(1)}X_1^{(2)}+4\,X_0^{(1)}X_2^{(2)}
\ \in\ \K_{\mathrm{dp}}[X_0,X_1,X_2]_3.
\]
The form $F$ determines a truncated functional $\Lambda|_{R_{\le 3}}$ by
\[
\Lambda(x^a y^b)\;:=\;F\big(x_0^{3-(a+b)}x_1^a x_2^b\big),\qquad a+b\le 3.
\]
Hence
\[
\Lambda(1)=4,\qquad \Lambda(x)=\Lambda(y)=0,\qquad 
\Lambda(x^2)=\Lambda(y^2)=4,\qquad \Lambda(xy)=0,
\]
and moreover $\Lambda(x^3)=\Lambda(x^2y)=\Lambda(xy^2)=\Lambda(y^3)=0$.

Let $B=\{1,x,y,xy\}$ be a complete staircase monomial set of size $r=4$.

\medskip
\noindent\textbf{(A) Hankel approach.}
Introduce the unknown moments of degree $>3$ that will appear in the Hankel construction:
\[
u:=\Lambda(x^2y^2),\qquad
b:=\Lambda(x^3y),\qquad
c:=\Lambda(x^3y^2),\qquad
d:=\Lambda(xy^3),\qquad
e:=\Lambda(x^2y^3).
\]
The Hankel matrix with respect to $B$ is
\[
H^{B,B}_\Lambda=\big(\Lambda(bb')\big)_{b,b'\in B}
=
\begin{pmatrix}
4&0&0&0\\
0&4&0&0\\
0&0&4&0\\
0&0&0&u
\end{pmatrix}.
\]
The shifted Hankel matrices are
\[
H^{xB,B}_\Lambda=\big(\Lambda(x\,bb')\big)_{b,b'\in B}
=
\begin{pmatrix}
0&4&0&0\\
4&0&0&b\\
0&0&0&u\\
0&b&u&c
\end{pmatrix},
\qquad
H^{yB,B}_\Lambda=\big(\Lambda(y\,bb')\big)_{b,b'\in B}
=
\begin{pmatrix}
0&0&4&0\\
0&0&0&u\\
4&0&0&d\\
0&u&d&e
\end{pmatrix}.
\]
The multiplication matrices are defined by
\[
M_x(h)^t=(H^{B,B}_\Lambda)^{-1}H^{xB,B}_\Lambda,\qquad
M_y(h)^t=(H^{B,B}_\Lambda)^{-1}H^{yB,B}_\Lambda,
\]
so (since $H^{B,B}_\Lambda$ is diagonal) we get the \emph{symbolic} matrices
\begin{equation}\label{eq:example-Mx-My-hankel}
M_x(h)=
\begin{pmatrix}
0&1&0&0\\
1&0&0&\frac{b}{4}\\
0&0&0&\frac{u}{4}\\
0&\frac{b}{u}&1&\frac{c}{u}
\end{pmatrix},
\qquad
M_y(h)=
\begin{pmatrix}
0&0&1&0\\
0&0&0&\frac{u}{4}\\
1&0&0&\frac{d}{4}\\
0&1&\frac{d}{u}&\frac{e}{u}
\end{pmatrix}.
\end{equation}

\medskip
\noindent\textbf{(B) Iterated multiplication tensor approach.}
Identify
\[
1^*\leftrightarrow X_0,\qquad x^*\leftrightarrow X_1,\qquad y^*\leftrightarrow X_2,\qquad (xy)^*\leftrightarrow X_3.
\]
Let $G=\mu^{(2)}_{A,s}(\lambda)\in \K_{\mathrm{dp}}[X_0,X_1,X_2,X_3]_3$ be the symmetric iterated multiplication tensor
associated with $(A,B)$.
Equivalently, $G$ is the cubic whose coefficients are the triple products
\[
\text{coeff}(X_iX_jX_k)=\lambda_{x_ix_jx_k}=\Lambda(b_i b_j b_k),\qquad b_0=1,\ b_1=x,\ b_2=y,\ b_3=xy.
\]
The coefficients not involving $X_3$ are fixed by $F$ (i.e.\ by $\Lambda|_{R_{\le 3}}$), while the remaining ones
are unknown and denoted by $\lambda$.
Among them, the only coefficients needed to build the multiplication matrices in the basis $B$ are:
\begin{equation}\label{eq: ex:symbolicBijectionVariables}
\lambda_{X_0X_3^2}=\lambda_{X_1X_2X_3}=\Lambda((xy)^2)=\Lambda(x^2y^2)=u,\qquad
\lambda_{X_1^2X_3}=\Lambda(x^2\cdot xy)=\Lambda(x^3y)=b,
\end{equation}
\[
\lambda_{X_2^2X_3}=\Lambda(y^2\cdot xy)=\Lambda(xy^3)=d,\qquad
\lambda_{X_1X_3^2}=\Lambda(x\cdot (xy)^2)=\Lambda(x^3y^2)=c,\]\[
\lambda_{X_2X_3^2}=\Lambda(y\cdot (xy)^2)=\Lambda(x^2y^3)=e.
\]

% \begin{equation}\label{eq: ex: symbolicBijectionVariables}
% \lambda_{x^2y^2}=\text{coeff}(X_0X_3^{(2)})=\text{coeff}(X_1X_2X_3)=\Lambda(x^2y^2)=u,
% \end{equation}
% \[
% \lambda_{x^3y}=\text{coeff}(X_1^{(2)}X_3)=\Lambda(x^2\cdot xy)=\Lambda(x^3y)=b\]\[\lambda_{xy^3}=\text{coeff}(X_2^{(2)}X_3)=\Lambda(y^2\cdot xy)=\Lambda(xy^3)=d\]\[
% \lambda_{x^3y^2}=\text{coeff}(X_1X_3^{(2)})=\Lambda(x\cdot (xy)^2)=\Lambda(x^3y^2)=c\]\[
% \lambda_{x^2y^3}=\text{coeff}(X_2X_3^{(2)})=\Lambda(y\cdot (xy)^2)=\Lambda(x^2y^3)=e.
% \]
We use \Cref{rmk: det Hankel vs contraction} to construct the multiplication matrices $\widetilde M_x(\lambda),\widetilde M_y(\lambda)$ from $G$:
\[x_0\aprod G = 4\,X_0^{(2)}+4\,X_1^{(2)}+4\,X_2^{(2)}+\lambda_{x_0x_3^2}X_3^{(2)} \; \to \; M_{x_0\aprod G}:=\left(\!\begin{array}{cccc}
      4&0&0&0\\
      0&4&0&0\\
      0&0&4& 0\\
      0 & 0 & 0 & \lambda_{x_0x_3^2}
      \end{array}\!\right)
\]
\[x_1\aprod G = 4\,X_0X_1+\lambda_{x_1^2x_3}X_1X_3+\lambda_{x_1x_2x_3}X_2X_3+\lambda_{x_1x_3^2}X_3^{(2)} \; \to \; M_{x_1\aprod G}:= \left(\!\begin{array}{cccc}
      0&4&0&0\\
      4&0&0&\lambda_{x_1^2x_3}\\
      0&0&0& \lambda_{x_1x_2x_3}\\
      0 & \lambda_{x_1^2x_3} & \lambda_{x_1x_2x_3} & \lambda_{x_1x_3^2}
      \end{array}\!\right)
\]

\[x_2\aprod G = 4\,X_0X_2+\lambda_{x_1x_2x_3}X_1X_3+\lambda_{x_2^2x_3}X_2X_3+\lambda_{x_2x_3^2}X_3^{(2)} \; \to \; M_{x_2\aprod G}:=\left(\!\begin{array}{cccc}
      0&0&4&0\\
      0&0&0&\lambda_{x_1x_2x_3}\\
      4&0&0& \lambda_{x_2^2x_3}\\
      0 & \lambda_{x_1x_2x_3} & \lambda_{x_2^2x_3} & \lambda_{x_2x_3^2}
      \end{array}\!\right)
\]
We get the multiplication operators $    \widetilde{M}_{x_i}(\lambda)$ using 
$ \widetilde{M}_{x_i}(\lambda)^t\cdot M_{x_0\aprod G}=M_{x_i\aprod G}
$.

% One obtains
% \begin{equation}\label{eq:example-Mx-My-tensor}
% \widetilde M_x(\lambda)=
% \begin{pmatrix}
% 0&1&0&0\\
% 1&0&0&\frac{\lambda_{X_1^2X_3}}{4}\\
% 0&0&0&\frac{\lambda_{X_0X_3^2}}{4}\\
% 0&\frac{\lambda_{X_1^2X_3}}{\lambda_{X_0X_3^2}}&1&\frac{\lambda_{X_1X_3^2}}{\lambda_{X_0X_3^2}}
% \end{pmatrix},
% \qquad
% \widetilde M_y(\lambda)=
% \begin{pmatrix}
% 0&0&1&0\\
% 0&0&0&\frac{\lambda_{X_0X_3^2}}{4}\\
% 1&0&0&\frac{\lambda_{X_2^2X_3}}{4}\\
% 0&1&\frac{\lambda_{X_2^2X_3}}{\lambda_{X_0X_3^2}}&\frac{\lambda_{X_2X_3^2}}{\lambda_{X_0X_3^2}}
% \end{pmatrix}.
% \end{equation}

\medskip
\noindent\textbf{Symbolic equality and the bijection of variables.}
Equation \eqref{eq: ex:symbolicBijectionVariables} gives a bijection between the set of variables appearing in $M_{x_i}(h)$ and the ones in $\widetilde{M}_{x_i}(\lambda)$, and modulo this identification of parameters, $M_{x_i}(h)$ agrees with $\widetilde{M}_{x_i}(\lambda)$ for $i=1,2$.
% Let
% \[
% H=\{u,b,c,d,e\},\qquad 
% \Delta=\{\lambda_{X_0X_3^2},\lambda_{X_1^2X_3},\lambda_{X_1X_3^2},\lambda_{X_2^2X_3},\lambda_{X_2X_3^2}\}.
% \]
% Define the map
% \[
% \Psi:H\to \Delta,\qquad
% u\mapsto \lambda_{X_0X_3^2},\ \ 
% b\mapsto \lambda_{X_1^2X_3},\ \ 
% c\mapsto \lambda_{X_1X_3^2},\ \ 
% d\mapsto \lambda_{X_2^2X_3},\ \ 
% e\mapsto \lambda_{X_2X_3^2}.
% \]
% Then $\Psi$ is a bijection and, comparing \eqref{eq:example-Mx-My-hankel} and \eqref{eq:example-Mx-My-tensor}, we have
% \[
% M_x(\Psi(h))=\widetilde M_x(\lambda),\qquad
% M_y(\Psi(h))=\widetilde M_y(\lambda).
% \]
% Hence the two constructions yield the \emph{same symbolic multiplication matrices}: the numerical entries match,
% and the variables correspond bijectively via $\Psi$.

Hence the two constructions yield the \emph{same symbolic multiplication matrices}: the numerical entries match,
and the variables correspond bijectively via this bijection.

\end{example}

\section{Scheme structure from multiplication matrices}\label{Section:Scheme structure from multiplication matrices}

In \cite{Alessandra} commuting multiplication operators are constructed from a (truncated) Hankel extension
$\Lambda$ and a complete staircase basis $B$, and then recover the cactus scheme
$Z=\mathrm{Spec}(A)$ from those operators
via common eigenvectors and joint generalized eigenspaces.

\medskip

In our setting one can reproduce the same pipeline starting from the symmetric iterated multiplication
tensor. Indeed, by \Cref{thm: Summary}, determining a variable-extension completion of $F$ is equivalent to a choice of basis and a numerical specialization of the parameters $\lambda_m$ in $\Tilde{M}_{x_i}(\lambda)$ (cf \Cref{Sec:Recovering the cactus algorithm}). This corresponds to a solution of the commutation of the matrices $M_{x_i}(h)$ coming from Hankel extension after the bijective renaming of variables $\Psi$ in \eqref{eq: Psi bijection of variables}.

\medskip
Consequently, imposing the commutation relations on $\widetilde M_{x_i}(\lambda)$ yields the same
commutative algebra $A$ of length $r$ as in \cite{Alessandra}. Once a numerical specialization
$\lambda=\lambda_0$ is found with $\det H^{B,B}(\lambda_0)\neq 0$, the algebra structure on
$A=\langle B\rangle_k$ (hence the cactus scheme $Z=\mathrm{Spec}(A)$) is already encoded by the multiplication
operators. In particular, one may:
\begin{itemize}
\item recover the support of $Z$ from common eigenvectors of $(\widetilde M_{x_i}(\lambda_0))^t$,
\item recover local lengths and nonreduced structure from joint generalized eigenspaces
(or, equivalently, the Jordan structure of the nilpotent parts on each local block),
\item and write explicit equations for $Z$ using the staircase rewriting relations induced by $B$.
\end{itemize}
Therefore, every reconstruction step of the \cite{Alessandra} cactus algorithm can be carried out
verbatim starting from the iterated multiplication tensor, replacing Hankel moments by the
corresponding tensor coefficients.

\begin{remark}
\label{rmk:jordan-data}
In \cite{Alessandra} the support of $Z$ is recovered from the common rank-$1$ eigenvectors of the commuting family
$\{(M^B_{x_i})^t\}_{i=1}^n$, and the local lengths from joint generalized eigenspaces (equivalently,
from the nilpotent parts on each local block). One may further extract the local Jordan structure by restricting
\[
N_{i}(\zeta) := \big((M^B_{x_i})^t-\zeta_i I\big)\big|_{W_\zeta}
\]
to the joint generalized eigenspace $W_\zeta\subset A^*$ at a point $\zeta$; the sizes of Jordan chains encode
the nonreduced directions of the local scheme at $\zeta$.

A complementary viewpoint (beyond \cite{Alessandra}) is that for a generic linear form $\ell=\sum_i \lambda_i x_i$,
a Schur/Jordan-type factorization of the multiplication by $\ell$ operator yields a block decomposition
corresponding to the local algebras, and the nilpotency indices inside each block recover local invariants;
see \cite{barrilliMourrainTaufer2025GAD} for an explicit block-structured reconstruction framework.
\end{remark}

\begin{example}
We consider the matrices $M_x(h)$, $M_y(h)$ obtained in \Cref{ex:hankel-vs-mult}, where a numerical solution of the commutation equations
$M_x(h)M_y(h)=M_y(h)M_x(h)$ together with $\det H^{B,B}_\Lambda\neq 0$
determines an Artinian algebra structure on $A=\langle B\rangle_k$ with basis
$B=\{1,x,y,xy\}$. In this example, consider the linear functional
\[
\Lambda_0(p)\;:=\;\sum_{\varepsilon,\delta\in\{\pm 1\}} p(\varepsilon,\delta),
\]
which extends the truncated data coming from $F$ and satisfies
\[
u=\Lambda_0(x^2y^2)=4,\qquad b=c=d=e=0.
\]
Substituting into \eqref{eq:example-Mx-My-hankel}, we obtain
\[
M_x(\Lambda_0)=
\begin{pmatrix}
0&1&0&0\\
1&0&0&0\\
0&0&0&1\\
0&0&1&0
\end{pmatrix},
\qquad
M_y(\Lambda_0)=
\begin{pmatrix}
0&0&1&0\\
0&0&0&1\\
1&0&0&0\\
0&1&0&0
\end{pmatrix},
\]
which commute.

\medskip
Following the reconstruction step in the Hankel method (cf. \cite{Alessandra}), we consider common eigenvectors of
$M_x(\Lambda_0)^t$ and $M_y(\Lambda_0)^t$. For each $(\alpha,\beta)\in\{\pm1\}^2$, set
\[
w_{\alpha,\beta}\;:=\;(1,\alpha,\beta,\alpha\beta)^t\ \in k^4,
\]
which represents the evaluation functional $\mathrm{ev}_{(\alpha,\beta)}\in A^*$ in the dual basis
$B^*=\{1^*,x^*,y^*,(xy)^*\}$. A direct computation shows that
\[
M_x(\Lambda_0)^t\,w_{\alpha,\beta}=\alpha\,w_{\alpha,\beta},
\qquad
M_y(\Lambda_0)^t\,w_{\alpha,\beta}=\beta\,w_{\alpha,\beta}.
\]
Hence the simultaneous eigenvalues of the commuting pair $(M_x(\Lambda_0)^t,M_y(\Lambda_0)^t)$
are exactly the pairs $(\alpha,\beta)\in\{(\pm1,\pm1)\}$. Since $\dim_\K A=4$, we obtain four
linearly independent common eigenvectors $w_{\alpha,\beta}$, and therefore the apolar scheme in the
affine chart $x_0\neq 0$ is the reduced set of four points
\[
\mathrm{Spec}(A)=\{(\alpha,\beta)\mid \alpha,\beta\in\{\pm1\}\}\subset \mathbb{A}^2.
\]
Homogenizing yields the corresponding reduced subscheme of $\mathbb{P}^2$ supported on the same four points
in the chart $x_0\neq 0$.
\end{example}

\begin{remark}
\label{rmk:scheme-already-there-BT}

In our setting we can also obtain the explicit generators of the ideal of the scheme from the complete staircase basis $B$ (see \Cref{Def: CompleteStaircase}) and the commuting multiplication operators $M^B_{x_1},\dots,M^B_{x_n}$. 

\medskip
Define the $\K$-algebra morphism
\[
\rho:\ R\longrightarrow \mathrm{End}_\K(A),\qquad x_i\longmapsto M^B_{x_i},
\]
where $A=\langle B\rangle_\K$ with multiplication induced by the matrices.
Then $\ker(\rho)=I_\Lambda$, and $Z=\mathrm{Spec}(R/\ker\rho)$.

\medskip
To obtain explicit generators from the staircase, consider the \emph{border} of $B$,
\[
\partial B:=\{\,x_i b:\ b\in B,\ i=1,\dots,n\,\}\setminus B.
\]
For each border monomial $m=x_i b_j\in\partial B$, the column $j$ of $M^B_{x_i}$ gives the normal form
\[
[x_i b_j]\ =\ \sum_{\ell=1}^r (M^B_{x_i})_{\ell j}\,[b_\ell]\quad\text{in }A,
\]
hence we define the corresponding \emph{staircase rewriting relation}
\[
g_{m}\ :=\ x_i b_j-\sum_{\ell=1}^r (M^B_{x_i})_{\ell j}\,b_\ell \ \in R.
\]
Let
\[
J_B\ :=\ \langle\, g_m \ :\ m\in \partial B \,\rangle \ \subset R.
\]
Then $R/J_B$ has $k$-basis $\overline{B}$ and multiplication by $x_i$ represented by $M^B_{x_i}$.
In particular, there is an induced isomorphism of $\K$-algebras
\[
R/J_B\ \cong\ A\ \cong\ A_\Lambda,
\]
so $J_B=\ker(\rho)=I_\Lambda$, and therefore
\[
Z\ =\ V(I_\Lambda)\ =\ V(J_B)\subset \mathbb{A}^n.
\]

For the projective scheme in $\mathbb{P}^n$, we simply homogenize the ideal $J_B$ and saturate w.r.t.\ $(x_0,\dots,x_n)$. See \cref{Ex: embedded dimension} for an application.

\begin{remark}\label{rem:QI-commuting}
After a successful completion, the multiplication matrices $\{M_{x_i}\}$ provide a concrete representation of a finite-dimensional commutative algebra.
From a quantum-information perspective, one may think of them as a family of mutually commuting operators whose joint (generalized) eigenspaces encode the support scheme; in the reduced case this corresponds to simultaneous diagonalization, while nonreduced structure is reflected by Jordan blocks.
\end{remark}
\end{remark}

\section{On the choice of bases}\label{Section 6}

The tensor formulation above is basis-free. For computations one must choose a basis. In particular, we stress that in \Cref{thm: Summary}, the variables $X_{n+1},\ldots,X_{r-1}$ are not purely symbolic variables, since the coefficients $\lambda_m$ for $\deg m'\leq d$ have been fixed with respect to some basis chosen \emph{a priori}. Therefore, first we make a guess on the basis of the Artinian algebra, and then we write the multiplication tensor as a homogeneous extension of $F$ using more variables, which we identify with elements of the basis. This  choice should be done
\emph{without losing solutions}.

\begin{definition}\label{Def: CompleteStaircase}
    Let $B\subseteq R$ be a set of monomials. We say $B$ is connected to 1 if $\forall m\in B$ either $m=1$ or there is $1\leq i\leq n$ and $m'\in B$ such that $m=x_im'$. We say that $B$ is a staircase if for all $i=1,\ldots,n$ $x_ix^{\beta}\in B$ implies $x^{\beta}\in B$. If in addition $B$ contains all the degree one monomials, we say that $B$ is a complete staircase.
\end{definition}

Let $F\in S_d^*$ be a concise polynomial and $Z=\operatorname{Spec}(A)\subseteq \mathbb P^n$ a finite apolar scheme to $F$, so $A\cong R/I$ with $R=\K[x_1,\dots,x_n]$ for some 0-dimensional ideal $I$. Then for any term order $\prec$, the set of standard monomials
\[
B_\prec(I):=\{\,\text{monomials not in }\mathrm{LT}_\prec(I)\,\}
\]
is a staircase basis of $A$, and as a consequence of conciseness of $F$ we can take $B$ a complete staircase (see {\cite[Proposition 3.2]{Alessandra}}).

\medskip

Therefore restricting computations to staircase bases is a gauge choice and does not
discard solutions. Moreover, staircase bases typically yield sparse multiplication matrices and minimize
the number of rewriting relations, hence are also favorable computationally.
Moreover, if $A$ is local, by \cite[Lemma 2.6]{bernardi2025refinementlocalcactusrank} we can choose a basis using the possible Hilbert function of the algebra.

\medskip

In the following example we highlight the importance of choosing an adequate basis. 

\begin{example}
    Let $F=X_0^{(5)}X_1^{(2)} + X_0^{(5)}X_1X_2\in \K_{\mathrm{dp}}[X_0,X_1,X_2]_7$. One checks that its centroid has dimension 1, so its cactus rank $r$ is not $3$ (see \Cref{alg:minimalcactus}). We set $r=4$ and look for an apolar scheme to $F$ of length $r$, which, by \cite[Corollary 2.2.1]{Buczy_ski_2013}, must be unique. We set $B=\{1,x_1,x_2,x_2^2\}$ and write a variable-extension completion of $F$ as 
    \[G=X_0^{(5)}X_1^{(2)} + X_0^{(5)}X_1X_2 + \lambda_{x_2^2}X_0^{(6)}X_3 + \lambda_{x_1x_1^2}X_0^{(5)}X_1X_3 + \ldots + \lambda_{x_2^{14}}X_3^{(7)}.\]
    We know that $F$ determines the coefficients $\lambda_m$ corresponding to monomials in $\K[x_1,x_2]_{\leq 7}$, such as $\lambda_{1^6 x_2^2}=F(x_0^5x_2^2)=0$, so we obtain
    \[G=X_0^{(5)}X_1^{(2)} + X_0^{(5)}X_1X_2 + \lambda_{x_1^6x_2^2}X_1^{(6)}X_3 + \lambda_{x_1^5x_2^3}X_1^{(5)}X_2X_3 + \ldots + \lambda_{x_2^{14}}X_3^{(7)}\]
    However, $x_0^5\aprod G=X_1^{(2)} + X_1X_2$, which is not concise in 4 variables, thus by \Cref{thm: Summary} it is not a multiplication tensor. We obtain a different result if we set $B=\{1,x_1,x_2,x_1^2\}$. Indeed, the unique apolar scheme to $F$ of length $4$ is defined by 
    \begin{equation}\label{eq: embedded dimension}
        A=\K[x_1,x_2]/\operatorname{Ann}(f)=\K[x_1,x_2]/(x_{2}^{2},\,x_{1}^{2}-x_{1}x_{2})        
    \end{equation}

    for $f=X_1^{(2)} + X_1X_2$, which admits $\{1,x_1,x_2,x_1^2\}$ as a basis but not $\{1,x_1,x_2,x_2^2\}$. In the first basis, the structure tensor is $\mu_{A,s}^{(6)}=X_0^{(5)}X_1^{(2)} + X_0^{(5)}X_1X_2 + X_0^{(6)}X_3$. 
    Hence, only the first basis allows us to find the scheme and to conclude that the cactus rank of $F$ is $4$. 
\end{example}

\begin{remark}
As in this example, whenever we have $F\in S_d^*$ and we solve a length $r>n+1$ variable-extension completion of it, we find an Artinian Gorenstein algebra $A$ defining a scheme $\operatorname{Spec}(A)\subseteq \mathbb P^n$ apolar to $F$. But we also find a scheme $\operatorname{Spec}(A)\subseteq \mathbb P^{r-1}$ apolar to $\mu_{A,s}^{(d-1)}$. This corresponds to the fact that the \emph{embedded dimension} $e(A)$ of $A$, that is, the minimum integer $t$ such that $A$ is a quotient of $\K[x_1,\ldots,x_t]$, is strictly less than $r$. 
    
\end{remark}

\begin{example}\label{Ex: embedded dimension}
    The scheme $\operatorname{Spec}(A)\subseteq \mathbb P^2$ for $A$ in \eqref{eq: embedded dimension} from the previous example has length 4 and Hilbert function $(1,3,4,\ldots)$.
    Say we start with $\mu_{A,s}^{(6)}=X_0^{(5)}X_1^{(2)} + X_0^{(5)}X_1X_2 + X_0^{(6)}X_3\in \K_{\mathrm{dp}}[X_0,X_1,X_2,X_3]_7$ and look for its cactus rank. With \Cref{alg:minimalcactus} we already find that the cactus rank is $4$. The only candidate basis for the algebra is $\{1,x_1,x_2,x_3\}$. The multiplication operators are 
    \[M_{x_1}=\left(\!\begin{array}{cccc}
       0&0&0&0\\
       1&0&0&0\\
       0&0&0&0\\
       0&1&1&0
       \end{array}\!\right),\:M_{x_2}=\left(\!\begin{array}{cccc}
       0&0&0&0\\
       0&0&0&0\\
       1&0&0&0\\
       0&1&0&0
       \end{array}\!\right),\: M_{x_3}=\left(\!\begin{array}{cccc}
       0&0&0&0\\
       0&0&0&0\\
       0&0&0&0\\
       1&0&0&0
       \end{array}\!\right).\]
To obtain the presentation of the algebra, we follow \Cref{rmk:scheme-already-there-BT}: we compute the normal forms of the elements in the border basis:
\[x_1^2\equiv x_3 \quad x_1x_2\equiv x_3\]
and the rest of the elements are congruent to 0. Thus, 
\[A\cong \K[x,y,z]/I \quad \quad I=(x_1^2-x_3,x_1x_2-x_3,x_1x_3, x_2^2,x_2x_3, x_3^2).\]
Clearly this presentation is isomorphic to \eqref{eq: embedded dimension}.
Homogenizing $I$ with respect to $x_0$ yields a scheme in $\mathbb P^3$ with Hilbert function $(1,4,\ldots)$.

\end{example}

\subsection{Bases allowing coordinate change of the tensor}

\newcommand{\monomialIdealPicture}[1]{
    \begin{tikzpicture}[x=(220:1cm), y=(-40:1cm), z=(90:0.707cm),scale=0.5]
        \foreach \m [count=\y] in {#1}{
          \foreach \n [count=\x] in \m {
          \ifnum \n>0
              \foreach \z in {1,...,\n}{
                \draw [fill=orange!30] (\x+1,\y,\z) -- (\x+1,\y+1,\z) -- (\x+1, \y+1, \z-1) -- (\x+1, \y, \z-1) -- cycle;
                \draw [fill=orange!40] (\x,\y+1,\z) -- (\x+1,\y+1,\z) -- (\x+1, \y+1, \z-1) -- (\x, \y+1, \z-1) -- cycle;
                \draw [fill=orange!10] (\x,\y,\z)   -- (\x+1,\y,\z)   -- (\x+1, \y+1, \z)   -- (\x, \y+1, \z) -- cycle;  
              }
             \fi
          }
        }
    \end{tikzpicture}
}

    Recall that we work over a characteristic zero field $\K$.
    Let $I\subseteq R = \K[x_1, \ldots ,x_n]$ be a zero-dimensional monomial ideal. Recall that $I$ is \emph{Borel-fixed}
    if for every $i \geq j$ and every monomial $x_im'\in I$ we have $x_jm'\in I$.

    \begin{example}\label{ex:borel-fixed}
        Take $n = 3$ and consider monomial ideals with $\dim_{\K} R/I = 4$. The Borel-fixed ones are 
        $$I_1=(x_1, x_2, x_3^4),  \quad I_2=(x_1, x_2^2, x_2x_3, x_3^3), \quad I_3=(u_1^2, u_1u_2, u_2^2, u_1u_3, u_2u_3, u_3^2).$$
    \end{example}
    Borel-fixed ideals are widely used to provide bases of ideals, not necessarily in the zero-dimensional setting, see
    \cite{Alberelli20191, Bertone2013263}.

    The following notion, in contrast, is specific for the zero-dimensional case and seems new.
    \begin{definition}[Squat ideals]\label{ref:squat:def}
        A zero-dimensional Borel-fixed ideal $I$ is a \emph{squat ideal} if for every $i$ and every monomial $x_im'\in I$, we have
        $(x_1, \ldots ,x_n)^2m'\subset I$. %\joa{Feel free to suggest alternative names.}
    \end{definition}
    The name comes from the shape of the staircases, for example for $n=3$, and $22$ boxes, we have the following possibilities
    \begin{center}
        \monomialIdealPicture{{5, 3, 1}, {4, 2}, {3, 1}, {2}, {1}}
        \monomialIdealPicture{{5, 3, 2}, {4, 2}, {3, 1}, {2}}
        \monomialIdealPicture{{6, 4, 2}, {4, 2}, {2, 1}, {1}}
        \monomialIdealPicture{{5, 4, 2}, {4, 2}, {3, 1}, {1}}
        \monomialIdealPicture{{5, 3, 2}, {4, 2, 1}, {3, 1}, {1}}
        \monomialIdealPicture{{5, 4, 2}, {4, 2, 1}, {2, 1}, {1}}
        \monomialIdealPicture{{5, 3, 2, 1}, {4, 2, 1}, {2, 1}, {1}}
    \end{center}

    \begin{example}\label{ex:only-one-squat-length4} In the previous \Cref{ex:borel-fixed} only $I_3$ is squat.
Indeed, since $x_1=x_1\cdot 1\in I_1$, if $I_1$ were squat we would have
$(x_1,x_2,x_3)^2\cdot 1\subset I_1$, hence $x_3^2\in I_1$, a contradiction. The same argument applies to $I_2$:
as $x_1=x_1\cdot 1\in I_2$, squatness would force $x_3^2\in I_2$, but $x_3^2\notin I_2$.

Finally, $I_3=(x_1,x_2,x_3)^2$ is squat because whenever $x_i m'\in I_3$ the monomial $m'$ has degree $\ge 1$.
\end{example}

        Squat staircases are named so because of their plump shape. Let us make this precise.
        %     \joa{
        % \begin{lemma}\label{ref:smallRegularity:lemma}
        %     Let $I \subset S = \K[x_0, \ldots, x_n]$ be a squat ideal and $k\geq 0$ be such that $I\not\subset (x_0,\ldots,x_n)^{k+1}$. Then $(x_0,\ldots, x_n)^{2k} \subseteq I$. In particular, if $k\geq 0$ is such that the length of $S/I$ is smaller than $\binom{k+n}{n}$, then all elements
        %     in the staircase of $I$ have degree less than $2k$.
        % \end{lemma}
        % }
        % \ale{We defined the squat ideals in the affine setting (cf. \Cref{ref:squat:def}): $x_im' \in I \Rightarrow (x_1, \ldots , x_n)^2m' \in I$, while here the Lemma is written for $S=\K[x_0, \ldots, x_n]$. I think we should rewrite it for the affine setting. I.e.:
        
        \begin{lemma}\label{ref:smallRegularity:lemma}
        Let $I\subset R=\mathbb K[x_1,\ldots,x_n]$ be a squat monomial ideal, and set $\mathfrak m=(x_1,\ldots,x_n)$. If $k\ge 0$ is such that $I\not\subset \mathfrak m^{k+1}$, then $\mathfrak m^{2k}\subseteq I$. In particular, if
        $$\ell(R/I)<\binom{n+k}{n},$$
        then every monomial in the staircase of $I$ has degree $<2k$.
        \end{lemma}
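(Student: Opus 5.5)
The plan is to use the two defining properties one after the other. Borel-fixedness moves a low-degree element of $I$ to a pure power of $x_1$. Squatness, applied repeatedly, then trades each factor $x_1$ for two arbitrary variables.

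\emph{Step 1 (a pure power of $x_1$ in $I$).} Since $I$ is monomial and $I\not\subset\mathfrak m^{k+1}$, $I$ contains a monomial $u$ with $e:=\deg u\le k$. If $e=0$ then $I=R$ and there is nothing to prove, so assume $e\ge 1$. The Borel condition says that $x_i m'\in I$ implies $x_j m'\in I$ for $j\le i$. Applying it once for each variable of $u$, replacing that variable by $x_1$, we obtain $x_1^{e}\in I$.

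\emph{Step 2 (induction via squatness).} We prove by induction on $j=0,\ldots,e$ that $\mathfrak m^{2j}x_1^{e-j}\subseteq I$. The case $j=0$ is Step 1. For the inductive step, suppose $0\le j<e$. Every generator of $\mathfrak m^{2j}x_1^{e-j}$ has the form $x_1\cdot m'$ with $m'=q\,x_1^{e-j-1}$, where $q$ is a monomial of degree $2j$. Since $x_1 m'\in I$, \Cref{ref:squat:def} gives $\mathfrak m^2 m'\subseteq I$. Letting $q$ range over all monomials of degree $2j$ yields $\mathfrak m^{2j+2}x_1^{e-j-1}\subseteq I$. At $j=e$ we get $\mathfrak m^{2e}\subseteq I$, and since $e\le k$ we conclude $\mathfrak m^{2k}\subseteq\mathfrak m^{2e}\subseteq I$.

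\emph{Step 3 (the length bound).} The staircase of $I$ is a $\K$-basis of $R/I$, so it has exactly $\ell(R/I)$ monomials. There are $\binom{n+k}{n}$ monomials of degree $\le k$. If $\ell(R/I)<\binom{n+k}{n}$, some monomial of degree $\le k$ is not in the staircase, hence lies in $I$, and so $I\not\subset\mathfrak m^{k+1}$. By the first part $\mathfrak m^{2k}\subseteq I$, so every staircase monomial has degree $<2k$.

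The only point that needs care is the bookkeeping in Step 2. Squatness applies only to monomials written as $x_i m'$, so at each stage we must keep at least one factor $x_1$ available to peel off. This is why the induction runs exactly $e$ times and terminates at $\mathfrak m^{2e}$.
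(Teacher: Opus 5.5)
Your proof is correct and follows the paper's argument. Pick a monomial of degree $e\le k$ in $I$, apply the squat condition once for each of its variables to obtain $\mathfrak m^{2e}\subseteq I$, and get the length statement by counting the $\binom{n+k}{n}$ monomials of degree at most $k$; your Step 1 reduction to $x_1^{e}$ via Borel-fixedness is harmless but unnecessary, since \Cref{ref:squat:def} applies to $x_i m'\in I$ for every $i$, so you can peel off the variables of $u$ directly, as the paper does.
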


        \begin{proof}
            Since $I$ is not contained in $(x_1,\ldots,x_n)^{k+1}$, there is a monomial $m\in I$ of degree at most $k$.
            Repetitively applying the condition from \Cref{ref:squat:def} to $m$, we obtain that $I$ contains $(x_1,\ldots, x_n)^{2\deg(m)}$.
            This concludes the first part of the proof.
            The length of $R/(x_1,\ldots, x_n)^{k+1}$ is equal to $\binom{n+k}{k}$, so that if the length of $I$ is smaller, then $I$ cannot be contained
            in $(x_1,\ldots,x_n)^{k+1}$, hence by the part above $I$ contains $(x_1,\ldots, x_n)^{2k}$, so every monomial not in $I$ has degree less than $2k$.
            This proves the second part.
        \end{proof}
        \Cref{ref:smallRegularity:lemma} sharply distinguishes squat ideals among all monomial and Borel-fixed ones. Indeed, monomial or
        Borel-fixed ideals of length $d$ can have an element of degree $d-1$ in their staircase, which is a significant drawback for the
        algorithms presented in~\cite{Alessandra}.

    There are much fewer squat staircases than Borel staircases and much fewer Borel staircases than all staircases.
    We provide their numbers for some small lengths in three variables for illustration.

{\footnotesize{
    
    \[\hspace*{-2cm}
        \begin{tabular}{c c c c c c c c c c c c c c c c c c}
            \hfill $\K[x_1, x_2,x_3]$ \hfill & $1$ & $2$ & $3$ & $4$ & $5$ & $6$ & $7$ & $8$ & $9$ & $10$ &
            $11$ & $12$ & $13$ & $14$ & $15$ & $16$
            \\
            \#staircases & $1$ & $3$ & $6$ &
            $13$ & $24$ & $48$ & $86$ & $160$ & $282$ & $500$ & $859$ & $1479$& $2485$& $4167$& $6879$& $11297$\\
            \#Borel staircases & $1$ & $1$ & $2$ & $3$ & $4$ & $6$ &
            $9$ & $12$ & $17$ & $24$& $32$& $44$& $60$& $80$& $107$& $143$\\
            \#Squat staircases & $1$ & $1$ & $1$ & $1$ & $1$ & $1$ & $2$ & $2$ & $2$ & $2$ & $2$ & $2$ & $3$ & $3$ & $3$ & $4$
        \end{tabular}
    \]
    }}
    
    Both the notions of Borel-fixed and squat ideals are motivated by group actions. For Borel-fixed, we consider the
    natural action of $GL_n$ on $\K[x_1, \ldots,x_n]$. We have a Borel subgroup
    \[
        B_n := \begin{pmatrix}
            * & * &  \ldots & * & *\\
            0 & * & \ldots  & * & *\\
              && \ldots\\
            0 & 0 &  \ldots & * & *\\
            0 & 0 &  \ldots & 0 & *
        \end{pmatrix}
    \]
    and a zero-dimensional ideal $I\subset \K[x_1, \ldots ,x_n]$ satisfies $B_n\cdot I \subset I$ if and only if it is
    Borel-fixed. Borel-fixed ideals are frequently called strongly-stable, as the latter notion works better in positive characteristics.

    \newcommand{\Rbar}{\overline{R}}
    \newcommand{\Aut}{\operatorname{Aut}}
    \newcommand{\Der}{\operatorname{Der}}
    \newcommand{\Autbar}{\overline{\Aut}}

    To procure the group action corresponding to squat ideals, we need to restrict our scope further. Fix $r\geq 2$ and consider ideals
    $I\subset \K[x_1, \ldots ,x_n]$ such that the length of $A = \K[x_1, \ldots ,x_n]/I$ is $r$ and $A$ is supported only at zero.
    In this case, we have $I\supset (x_1, \ldots ,x_n)^r$, so that we can consider $\bar{I} \subset \K[x_1, \ldots ,x_n]/(x_1, \ldots ,x_n)^r$.
    Let
    \[
        \Rbar := \frac{\K[x_1, \ldots ,x_n]}{(x_1, \ldots ,x_n)^r}.
    \]
    The automorphism group $\Aut(\Rbar)$ of $\Rbar$ contains $GL_n$, but it is much larger. Namely,
    for every elements $r_1, \ldots ,r_n\in \Rbar$ such that $r_1, \ldots ,r_n$ are $\K$-linearly independent modulo $(x_1, \ldots ,x_n)^2$,
    we have a unique automorphism $\phi\colon \Rbar\to \Rbar$, defined by $\phi(x_i) = r_i$ for $i=1, \ldots , n$.
    Let $\Autbar(\Rbar)\subset \Aut(\Rbar)$ be the subgroup defined by
    \[
        \Autbar(\Rbar) = \left\{ \varphi\in \Aut(\Rbar)\ |\ \varphi(x_i) \equiv x_i \mod (x_1, \ldots ,x_n)^2 \right\}.
    \]
    This is a unipotent group.
    We have a map $\pi\colon \Aut(\Rbar)\to GL_n$ given by $\varphi\mapsto \varphi \mod (x_1, \ldots ,x_n)^2$ and $\Autbar(\Rbar)$ is
    exactly the kernel of this map. Consider the subgroup $\pi^{-1}(B_n)$, where $B_n$ is the Borel subgroup above.
    Since $B_n$ is solvable and $\Autbar(\Rbar)$ is unipotent, also $\pi^{-1}(B_n)$ is solvable.
    The Lie algebra of $\Autbar(\Rbar)$ is the algebra of derivations $\Der(\Rbar, \Rbar)$. The Lie algebra of $\pi^{-1}(B_n)$
    inside is spanned by
    \[
        \{x_j \partial_{x_i}\ | i\geq j\} \cup \left\{ x_jx_k\partial_{x_i}\ |\ i,j,k \mbox{ any}  \right\}.
    \]
    This shows that a monomial ideal $I\subseteq \K[x_1, \ldots ,x_n]$ is a squat ideal iff it is fixed under the
    above derivations iff it is fixed under the group $\pi^{-1}(B_n)$.

\begin{example}\label{ex:autbar-squat}
Let $n=2$ and $r=5$. Set $R=\K[x_1,x_2]$ and $\m=(x_1,x_2)$, and consider the truncated local ring $\Rbar=\frac{R}{\m^5}$.
The $\K$-derivation $D:=x_2^2\partial_{x_1}$ preserves $\m^5$, hence it induces a derivation of $\Rbar$. Exponentiating $D$ gives a one parameter subgroup of $\Aut(\Rbar)$:
\[
\varphi_t=\exp(tD)\in \Aut(\Rbar),\qquad
\varphi_t(x_1)=x_1+t x_2^2,\ \ \varphi_t(x_2)=x_2.
\]
In particular, $\varphi_t\in \Autbar(\Rbar)$ because $\varphi_t(x_i)\equiv x_i\!\!\pmod{\m^2}$, hence $\pi(\varphi_t)=\mathrm{Id}$.

Now consider the following two monomial ideals in $R$ (both define length $5$ algebras supported at $0$):
\[
I_{\mathrm{s}}=(x_1^2,\ x_1x_2^2,\ x_2^3),\qquad
I_{\mathrm{ns}}=(x_1^2,\ x_1x_2,\ x_2^4).
\]
We claim that $I_{\mathrm{s}}$ is squat, while $I_{\mathrm{ns}}$ is not, and that this is detected by $D$.

\smallskip
\noindent\emph{(1) $I_{\mathrm{ns}}$ is not squat.}
Indeed, $x_1x_2=x_1\cdot x_2\in I_{\mathrm{ns}}$ with $m'=x_2$, but
\[
(x_1,x_2)^2\cdot x_2 \ni x_2^3 \notin I_{\mathrm{ns}}
\]
(since $I_{\mathrm{ns}}$ contains $x_2^4$ but not $x_2^3$).

\smallskip
\noindent\emph{(2) $I_{\mathrm{s}}$ is squat.}
One checks it on generators: for $x_1^2=x_1\cdot x_1$ we have $(x_1,x_2)^2x_1\subset (x_1^2,x_1x_2^2)$;
for $x_1x_2^2=x_1\cdot x_2^2$ and $x_2^3=x_2\cdot x_2^2$ we have $(x_1,x_2)^2x_2^2\subset (x_1^2,x_1x_2^2,x_2^3)$.

\smallskip
\noindent\emph{(3) Stability under the derivation.}
We have
\[
D(x_1^2)=2x_1x_2^2\in I_{\mathrm{s}},\qquad
D(x_1x_2^2)=x_2^4\in I_{\mathrm{s}}\ \ (\text{since }x_2^3\in I_{\mathrm{s}}),\qquad
D(x_2^3)=0,
\]
so $D(I_{\mathrm{s}})\subset I_{\mathrm{s}}$, i.e.\ $I_{\mathrm{s}}$ is fixed by $D$.

On the other hand,
\[
D(x_1x_2)=x_2^3\notin I_{\mathrm{ns}},
\]
so $D(I_{\mathrm{ns}})\nsubseteq I_{\mathrm{ns}}$ and $I_{\mathrm{ns}}$ is \emph{not} fixed by $D$.

Equivalently, the unipotent automorphism $\varphi_1$ sends
\[
\varphi_1(x_1x_2)=(x_1+x_2^2)x_2=x_1x_2+x_2^3,
\]
so $\varphi_1(I_{\mathrm{ns}})$ is not contained in $I_{\mathrm{ns}}$ (it forces a $x_2^3$ term).
This illustrates concretely how the ``quadratic'' part of $\pi^{-1}(B_n)$ detects the squat condition.
\end{example}

    \begin{theorem}[ubiquity of squat ideals]
        Let $A$ be a zero-dimensional algebra. Let $\mathcal{V}_A$ be the locus of ideals
        $I\subset R = \K[x_1, \ldots ,x_n]$ such that $R/I$ is isomorphic to $A$. Then the closure of $\mathcal{V}_A$
        in the Hilbert scheme contains a squat ideal.
    \end{theorem}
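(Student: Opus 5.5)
The plan is to apply the Borel fixed point theorem to the solvable group $\pi^{-1}(B_n)\subset\Aut(\Rbar)$ described above, acting on a suitable projective piece of the closure $\overline{\mathcal V_A}$. A fixed point of this action is the ideal we want. It is fixed by the diagonal torus inside $B_n$, so it is monomial. It is also fixed by the Lie algebra spanned by $\{x_j\partial_{x_i}\ |\ i\geq j\}\cup\{x_jx_k\partial_{x_i}\}$, so by the characterization given just before \Cref{ex:autbar-squat} it is a squat ideal.

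\emph{Step 1: reduction to ideals supported at the origin.} Let $r=\dim_\K A$. For $t\in\K^*$ let $\sigma_t$ be the automorphism $x_i\mapsto t^{-1}x_i$ of $R$. It preserves $\mathcal V_A$, since $R/\sigma_t(I)\cong R/I$. The scheme $V(\sigma_t(I))$ is the image of $V(I)$ under multiplication by $t$. Since the Hilbert scheme of $r$ points is separated and the Hilbert--Chow morphism is proper, the limit $\lim_{t\to0}\sigma_t(I)$ exists in $\overline{\mathcal V_A}$. It is supported at $0$; concretely it is the ideal of lowest-degree forms of $I$, which is homogeneous and of colength $r$. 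So the closed set
\[
P_A:=\overline{\mathcal V_A}\cap\{\,I\ :\ I\supseteq (x_1,\ldots,x_n)^r\,\}
\]
is nonempty. It is a closed subset of the projective punctual Hilbert scheme, viewed as a closed subvariety of the Grassmannian of codimension-$r$ subspaces of $\Rbar$. Hence $P_A$ is projective, and $\Aut(\Rbar)$ acts on the ambient punctual Hilbert scheme.

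\emph{Step 2: $P_A$ is stable under $G:=\pi^{-1}(B_n)$.} This is the main obstacle. Automorphisms of $\Rbar$ act only on ideals supported at $0$, while the points of $\mathcal V_A$ are in general not punctual. I would handle this as follows.

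First, $G$ is connected and generated by $B_n$ together with the one-parameter subgroups $\exp(tD)$ for $D=x_jx_k\partial_{x_i}$. It therefore suffices to treat each generator $\varphi\in\Aut(\Rbar)$ and show $\varphi(P_A)\subseteq P_A$. Lift $\varphi$ to a polynomial map $\Phi\colon\mathbb A^n\to\mathbb A^n$ with $\Phi^*(x_i)=r_i$. Because its linear part is invertible, $\Phi$ is étale at $0$, and there is an open subset $U\subseteq\mathbb A^n$ containing $0$ on which $\Phi$ is étale.

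Now take $I\in P_A$, written as a limit of $I_s\in\mathcal V_A$ along a curve. For $s$ near the limit, the schemes $Z_s=V(I_s)$ lie in $U$. Moreover $\Phi|_{Z_s}$ is a closed embedding, because this is an open condition in families and it holds at the punctual limit, where $\Phi$ induces an isomorphism of completed local rings. Hence $\Phi(Z_s)\cong Z_s$, so its ideal lies in $\mathcal V_A$. As $s$ tends to the limit, $\Phi(Z_s)$ specializes to the scheme whose ideal is $\varphi(I)$, which shows $\varphi(I)\in\overline{\mathcal V_A}$. For linear $\varphi\in B_n$ this is immediate, since $\Phi$ is a global automorphism. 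For the quadratic unipotent generators with $i\notin\{j,k\}$, $\exp(tD)$ is a triangular automorphism of $R$, so this case is also immediate. The genuinely étale argument is only needed for the remaining generators such as $x_i^2\partial_{x_i}$.

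\emph{Step 3: conclusion.} $G$ is connected and solvable: $B_n$ is solvable and $\Autbar(\Rbar)$ is unipotent. It acts on the nonempty projective variety $P_A$. By the Borel fixed point theorem there is a $G$-fixed ideal $I_0\in P_A\subseteq\overline{\mathcal V_A}$. By the torus argument above, $I_0$ is monomial. Being fixed by the Lie algebra of $G$, it is Borel-fixed and satisfies the squat condition of \Cref{ref:squat:def}. Therefore $I_0$ is a squat ideal in the closure of $\mathcal V_A$, as claimed. Should the étale argument of Step 2 prove delicate, a fallback is to replace $\overline{\mathcal V_A}\cap$(punctual locus) by the closure of the $\Aut(\Rbar)$-saturation of this set, and to check that saturating does not leave $\overline{\mathcal V_A}$. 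That check still rests on the same local-isomorphism argument, but only has to be done at punctual points.
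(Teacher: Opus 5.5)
Your proof is correct. Its skeleton is the same as the paper's: degenerate to the origin, apply Borel's fixed point theorem to $\pi^{-1}(B_n)$ on a projective subvariety of the punctual Hilbert scheme, and read off squatness from the Lie-algebra characterization. The difference is in which subvariety you use and how you get invariance under the group.

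The paper degenerates to a single punctual ideal $I'$ and sets $A'=R/I'$. It asserts $\mathcal{V}_{A'}\subseteq\overline{\mathcal{V}_A}$ and applies Borel's theorem to the closure of $\mathcal{W}_{A'}=\{\bar I\subset\overline{R} : \overline{R}/\bar I\cong A'\}$. That locus is $\operatorname{Aut}(\overline{R})$-stable for a tautological reason: an automorphism of $\overline{R}$ does not change the isomorphism type of a quotient. So no automorphism ever has to be transported to $\mathbb{A}^n$. The price is the inclusion $\mathcal{V}_{A'}\subseteq\overline{\mathcal{V}_A}$, which the paper states without proof. It follows by lifting generators of $A'$ to a flat family of algebras degenerating $A$ to $A'$ and applying Nakayama.

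You instead keep the entire punctual slice $P_A$ of $\overline{\mathcal{V}_A}$ and prove its $\operatorname{Aut}(\overline{R})$-stability directly. You lift $\varphi$ to a polynomial map $\Phi$ that is a closed embedding on the punctual fiber, spread this to nearby fibers, and pass to the limit. This is more hands-on, but it gives a stronger invariance statement. In particular it shows $\operatorname{Aut}(\overline{R})\cdot I'\subseteq\overline{\mathcal{V}_A}$, which is what the paper's unproved inclusion is really used for.

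A few minor points:
\begin{itemize}
\item With $\sigma_t(x_i)=t^{-1}x_i$, the limit is generated by the top-degree forms of elements of $I$, not the lowest-degree ones. The lowest-degree forms give the tangent cone at $0$, which has the wrong colength unless $V(I)=\{0\}$. This is harmless, since you justify existence of the limit abstractly.
\item Neither the \'etale neighbourhood $U$ nor the reduction to generators of $G$ is needed. For every $\varphi\in\operatorname{Aut}(\overline{R})$ with lifts $r_i\in\mathfrak{m}$, the map $R\to R/I$, $x_i\mapsto r_i$, is surjective because $\varphi$ is. That surjectivity is all the Nakayama step uses.
\item The generating set you list is in any case too small when $n=1$. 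There $x_1\partial_{x_1}$ and $x_1^2\partial_{x_1}$ already span a Lie subalgebra, of dimension $2<r-1$ once $r\geq 4$.
\item The limit ideal you obtain is $\varphi^{-1}(I)$ rather than $\varphi(I)$. This is irrelevant since $G$ is a group.
\end{itemize}
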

    \begin{proof}
        Pick any point $I\subseteq R$ in $\mathcal{V}_A$. Degenerating $I$ towards zero, we obtain that
        the closure of $\mathcal{V}_A$ contains an ideal $I'\subset R$ such that $R/I'$ is supported only at zero.
        Let $A' = R/I'$. The closure of $\mathcal{V}_A$ contains the whole $\mathcal{V}_{A'}$, so it is enough to
        find a squat ideal in the closure of $\mathcal{V}_{A'}$.

        Recall the ring $\Rbar$ defined above.
        Consider the locus $\mathcal{W}_{A'}$ that consists of ideals $\bar{I} \subset \Rbar$ such that
        $\Rbar / \bar{I}$ is isomorphic to $A'$. Its closure $\overline{\mathcal{W}_{A'}}$ is a projective subvariety
        of the Hilbert scheme of $\Rbar$ and the solvable group $\pi^{-1}(B_n)$ acts on this variety, so
        by Borel's fixed point theorem~\cite[Theorem III.10.4]{Borel}, 
        the variety $\overline{\mathcal{W}_{A'}}$
        contains a $\pi^{-1}(B_n)$-fixed point. Such a point corresponds to the desired squat ideal.
    \end{proof}

    Now we transform the above result into reducing the number of staircase bases.
    For a squat ideal $I\subseteq R$,  its \emph{squat staircase} $E_I\subseteq \mathbb{N}^n$ is the set of all monomials not in $I$.
    By abuse of notation, we will view $E_I$ both as a subset of $\mathbb{N}^n$ and a set of monomials in $R$.

    \begin{corollary}[Staircase after nonlinear change of basis]\label{ref:nonlinearBaseChange:cor}
        Let $I\subseteq R=\K[x_1, \ldots,x_n]$ be a zero-dimensional ideal such that $R/I$ has length $r$.
        Pick general $n$ elements of $R/I$ and their preimages $v_1, \ldots ,v_n\in R$.
        Then there exists a squat ideal $I_0$ such that the set
        \[
            \left\{ v_1^{e_1} \ldots v_n^{e_n}\ |\ (e_1, \ldots ,e_n)\in E_{I_0} \right\}
        \]
        is a basis of $R/I$.
    \end{corollary}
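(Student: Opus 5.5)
The plan is to reinterpret the choice of $v_1,\ldots,v_n$ as a point of the locus $\mathcal V_A$ for $A=R/I$, and then use the ubiquity theorem together with openness of the condition ``a fixed monomial set is a basis''. Consider the surjection $\psi_v\colon R\to R/I$, $x_i\mapsto v_i$. It is surjective when the $v_i$ generate $R/I$, which holds for general $v$: the set of tuples generating $R/I$ as an algebra is a nonempty Zariski open subset of $(R/I)^n$, nonempty because $\bar x_1,\ldots,\bar x_n$ generate. Its kernel $J_v$ is an ideal with $R/J_v\cong R/I$, so $J_v\in\mathcal V_A$. Moreover, for a finite monomial set $E$, the elements $\{v^e\}_{e\in E}$ form a basis of $R/I$ iff the monomials $\{x^e\}_{e\in E}$ form a basis of $R/J_v$. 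Thus the claim becomes: for general $v$, some squat staircase $E_{I_0}$ is a basis of $R/J_v$.

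By the ubiquity theorem, $\overline{\mathcal V_A}$ contains a squat ideal $I_0$. For the fixed monomial set $E=E_{I_0}$ of size $r$, the locus $U_E$ of ideals $J$ in the Hilbert scheme of $r$ points of $\mathbb A^n$ such that $E$ is a basis of $R/J$ is open; it is the standard affine chart (border-basis / marked-basis chart), defined by nonvanishing of the determinant of the map $\langle E\rangle\to R/J$ on the tautological bundle. Since $I_0\in U_E$ (its standard monomials are exactly $E$) and $I_0\in\overline{\mathcal V_A}$, the open set $U_E$ meets $\mathcal V_A$. The locus $\mathcal V_A$ is an orbit-like set, the image of the irreducible variety of generating tuples $v$ under $v\mapsto J_v$. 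Every ideal in $\mathcal V_A$ arises this way, since any presentation $R\to R/J\cong A$ corresponds to some tuple $v$ in $A$. Hence the preimage of $U_E$ in the space of tuples is a nonempty open subset of an irreducible variety, so it is dense. This shows that general $v$ satisfy the conclusion.

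One subtlety is that the preimages $v_i\in R$ are irrelevant: $v^e$ is computed in $R/I$, so only the classes matter. A second point is that the ubiquity theorem's proof degenerates to a local algebra $A'$ and finds $I_0$ in $\overline{\mathcal V_{A'}}\subseteq\overline{\mathcal V_A}$. This suffices, since we only need $I_0\in\overline{\mathcal V_A}$, which is exactly what the theorem states.

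The main obstacle is making the irreducibility and density argument precise, namely that $\mathcal V_A$ is the image of an irreducible parameter space, so that ``meets an open set'' upgrades to ``general point lies in it''. I would handle this by exhibiting the morphism from the open subset of $(R/I)^n=\mathbb A^{nr}$ of generating tuples to the Hilbert scheme, $v\mapsto\ker\psi_v$. This is a morphism because on this open set $\ker\psi_v$ is a flat family of colength-$r$ ideals. It is surjective onto $\mathcal V_A$ as argued above. The preimage of $U_E$ is then open, and nonempty since $U_E\cap\mathcal V_A\neq\emptyset$. Nonempty opens of affine space are dense, which completes the proof.
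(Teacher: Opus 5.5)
Your argument is correct, but it is organized differently from the paper's proof.

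The paper argues in two stages. First, for $R/I$ supported at the origin, it uses that the closure of the orbit of $\bar I$ under automorphisms of $R/\m^r$ contains a squat ideal $I_0$. Hence $E_{I_0}$ is a basis of $R/\varphi(I)$ for a general automorphism $\varphi$, and $\{\varphi^{-1}(m): m\in E_{I_0}\}$ is the desired basis. Second, for arbitrary $I$, it degenerates under $GL_n$ to an ideal $I'$ supported at the origin. It then transfers the property back to a general, hence every, point of $GL_n\cdot[I]$.

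You instead use the ubiquity theorem as a black box. You parametrize ``general elements'' by the irreducible open set $G\subseteq (R/I)^n$ of generating tuples, which maps onto $\mathcal V_A$ via $v\mapsto\ker\psi_v$. Openness of the chart $U_E$ for $E=E_{I_0}$ then finishes the proof in one step, uniformly for local and non-local algebras.

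Your route is more precise about the quantifier ``general''. The paper's local step produces the nilpotent generators $\varphi^{-1}(x_i)$ coming from automorphisms of $R/\m^r$, rather than general elements of $R/I$. Both this translation and the final transfer from $I'$ back to $I$ are left implicit. Your morphism $G\to\operatorname{Hilb}^r(\mathbb A^n)$ makes both automatic. It also yields a single $I_0$ valid on a dense open set of tuples. Finally, it shows that squatness plays no role beyond $I_0\in\overline{\mathcal V_A}$: any monomial ideal in that closure gives a basis indexed by its staircase, in general generators.

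The paper's route, in exchange, exhibits the new basis explicitly as the pullback of monomials under a nonlinear coordinate change $\varphi$. This is the form used in the variable-extension algorithm. It also keeps the role of the solvable group $\pi^{-1}(B_n)$ visible.
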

    \begin{proof}
        Assume first that $R/I$ is supported only at zero, so that $R/I\simeq \Rbar/\bar{I}$. Then $\Autbar(\Rbar)\cdot [\bar{I}]$
        contains a squat ideal $I_0$. The monomials in $E_{I_0}$ form a basis of $R/I_0$, hence also a basis of
        $R/\varphi(I)$, for a general $\varphi\in \Autbar(\Rbar)$. This means that $\{\varphi^{-1}(m)\ |\ m\in E_{I_0}\}$
        is the required basis for $R/I$. This proves the claim in this special case.

        Assume now that $R/I$ is general. As discussed above, the closure of $GL_n\cdot [I]$ contains an ideal $I'$ supported
        only at zero. The claim of the theorem is true for $R/I'$ by the argument above. But then, the claim is true for
        a general element of $GL_n\cdot [I]$. The claim is invariant under $GL_n$, so it is true for every element of $GL_n\cdot [I]$,
        in particular for $I$.
    \end{proof}

%\ale{I think that also \Cref{ref:nonlinearBaseChange:cor} is correct since it regards only the algebra. What it has to be clarified is how we use it in practice. So I would suggest to leave the Corollary above as it is and add the following.}

\begin{remark}\label{rem:squat-marking}
\Cref{ref:nonlinearBaseChange:cor} concerns only the abstract algebra
$A=R/I$. In the completion problem, one also has to keep track of the
marking
\[
\iota:V^*\hookrightarrow A
\]
introduced in \Cref{def:MT-CMG}. 
Explicitly, let 
\[
u_i:=\iota(x_i)\in A,\qquad i=1,\ldots,n.
\]
These need not coincide with the new generators, and in general we have, in the same notation as \Cref{ref:nonlinearBaseChange:cor}, a unique expression for each $u_i$
\[
u_i=c_i+L_i(v)+D_i(v),
\]
where $c_i\in\K$, $L_i(v)$ is linear in $v_1,\ldots,v_n$, and $D_i(v)$ is a
$\K$-linear combination of basis monomials in $\mathcal B_E$ of total degree at least $2$. Moreover, by definition of marked completion, for every multi-index $\alpha$ with
$|\alpha|\le d$ one has the matching equations
\[
f_\alpha=\Lambda\bigl(\iota(x)^\alpha\bigr)=\Lambda(u^\alpha).
\]

%After passing to a squat basis, the images of the original variables under
%$\iota$ need not coincide with the new generators. This is the reason why the
%matching equations must be written in terms of the transported marked
%generators, and not in terms of the squat generators themselves.
\end{remark}

\begin{example}
\label{ex:squat-basis-transported-marking}
Let $f=Y+X^{(2)}+Y^{(2)}$ and let $I:=\mathrm{Ann}(f)=(xy,\ y^2-x^2,\ x^3)\subset \K[x,y]$. Set $A:=\K[x,y]/I$. Then $A$ is a local Artinian Gorenstein algebra of length $4$. Indeed, one checks
that $\{1,\bar x,\bar y,\bar x^2\}$ is a basis of $A$, and the socle is generated by $\bar x^2$.

Now define two new elements of $A$ by
\[
v:=\bar x,\qquad w:=\bar y-\bar x^2.
\]
Using the relations in $A$, we compute
\[
vw=\bar x(\bar y-\bar x^2)=\bar x\bar y-\bar x^3=0,
\]
\[
w^2=(\bar y-\bar x^2)^2=\bar y^2-2\bar y\bar x^2+\bar x^4=\bar x^2=v^2,
\]
and
\[
w^3=w\,w^2=w\,\bar x^2=(\bar y-\bar x^2)\bar x^2=0.
\]
Therefore
\[
\{1,v,w,w^2\}
\]
is again a basis of $A$.

This basis is indexed by the squat staircase
\[
E=\{1,v,w,w^2\},
\]
namely by the staircase of the squat monomial ideal
\[
I_0=(v^2,vw,w^3)\subset \K[v,w].
\]
Notice, however, that the actual presentation ideal of $A$ in the generators
$v,w$ is
\[
J=(vw,\ v^2-w^2,\ w^3),
\]
so the point is not that $A$ is the quotient by the squat ideal $I_0$, but that
$A$ admits a basis indexed by the squat staircase $E_{I_0}$.

We now describe the marking in this squat basis. The original variables $x,y$
are represented in $A$ by
\[
u_x:=\bar x,\qquad u_y:=\bar y.
\]
Since
\[
\bar x=v,\qquad \bar y=w+v^2,
\]
the transported marked generators are
\[
u_x=v,\qquad u_y=w+v^2.
\]
Thus the second marked generator acquires a quadratic correction in the squat
basis.

Let $\Lambda_f:A\to\K$ be the functional associated with $f$, namely $\Lambda_f(a):=(a \lrcorner f)(0)$.
Then
\[
\Lambda_f(u_x)=0,\qquad \Lambda_f(u_y)=1,
\]
\[
\Lambda_f(u_x^2)=1,\qquad \Lambda_f(u_xu_y)=0,\qquad \Lambda_f(u_y^2)=1.
\]
Hence the coefficients of $f$ are recovered from the moments of the transported
marked generators $u_x,u_y$.

On the other hand, the moments of the squat generators themselves are
\[
\Lambda_f(v)=0,\qquad \Lambda_f(w)=0,
\]
\[
\Lambda_f(v^2)=1,\qquad \Lambda_f(vw)=0,\qquad \Lambda_f(w^2)=1.
\]
These are the coefficients of $X^{(2)}+Y^{(2)}$, not of $f=Y+X^{(2)}+Y^{(2)}$.

Therefore this example shows that, after passing to a squat basis, the correct
matching equations are
\[
f_\alpha=\Lambda_f(u^\alpha),
\]
where $u_i$ are the transported marked generators, and not $f_\alpha=\Lambda_f(v^\alpha)$.
\end{example}

\subsection{Using nonlinear changes of coordinates}

Let $R=\K[x_1,\ldots,x_n]$ and $A=R/\operatorname{Ann(\Lambda)}$ an Artinian Gorenstein algebra. Choose general elements $\bar v_1,\ldots,\bar v_n\in A$, and $v_1,\ldots,v_n\in R$. By \Cref{ref:nonlinearBaseChange:cor}, there exists
a squat ideal $I_0$ with staircase $E=E_{I_0}$ such that $\{\bar v^e:e\in E\}$
is a $k$-basis of $A$. Let
\[
\phi:\K[t_1,\ldots,t_n]\to A,\qquad t_i\mapsto \bar v_i.
\]
Then $\phi$ is surjective and $\ker\phi=\operatorname{Ann}(\phi^*\Lambda)$. Indeed, $p\in \operatorname{Ann}(\phi^*\Lambda)$ if and only if $p\aprod (\phi^* \Lambda)(q)=\Lambda(\phi(p)\phi(q))=0$ for all $q\in \K[t_1,\ldots,t_n]$. Since $\phi$ is surjective and $\Lambda$ defines a nondegenerate pairing in $A$, $\phi(p)=0$.

% Let $R=\K[x_1,\ldots,x_n]$ and $A=R/\operatorname{Ann(\Lambda)}$ an Artinian Gorenstein algebra. Take elements $p_1,\ldots,p_n\in R$ of order at least 2, that is, without linear of constant terms, set  $v_i=x_i + p_i(x)$ and consider the map 
% \[\phi: \K[t_1,\ldots,t_n]\to A \quad t_i\mapsto \overline{v_i}.\] 
% By \Cref{ref:nonlinearBaseChange:cor}, it is a surjective algebra homomorphism. Moreover, $\ker \phi=\operatorname{Ann(\phi^*\Lambda)}$. Indeed, $p\in \operatorname{Ann}(\phi^*\Lambda)$ if and only if $p\aprod (\phi^* \Lambda)(q)=\Lambda(\phi(p)\phi(q))=0$ for all $q\in \K[t_1,\ldots,t_n]$. Since $\phi$ is surjective and $\Lambda$ defines a nondegenerate pairing in $A$, $\phi(p)=0$.}

% \ale{I think that the above green part (I made it green only to highlight it) as it is is not correct because $\phi$ is surjective if we have a squat basis... in general it may not be. I suggest to riwrite it as follows:}

\medskip

% \ale{Choose general elements $\bar v_1,\ldots,\bar v_n\in A$, and $v_1,\ldots,v_n\in R$. By \Cref{ref:nonlinearBaseChange:cor}, there exists
% a squat ideal $I_0$ with staircase $E=E_{I_0}$ such that $\{\bar v^e:e\in E\}$
% is a $k$-basis of $A$. Let
% \[
% \phi:\K[t_1,\ldots,t_n]\to A,\qquad t_i\mapsto \bar v_i.
% \]
% Then $\phi$ is surjective and $\ker\phi=\operatorname{Ann}(\phi^*\Lambda)$.}

\begin{example}
    Let $F=X^{(2)}YZ^{(5)}\in (\Sym^8V^*)^*$ 
    %for $V$ a vector space over $k=\mathbb Z/101\mathbb Z$. 
    It is known that its cactus rank is 6, and the scheme is $\operatorname{Spec(A)}$ for $A=\K[x,y]/\operatorname{Ann}(X^{(2)}Y)=\K[x,y]/(x^3, y^2)$ is apolar to $F$ and has length 6. Since it has regularity $3\leq d/2-1$, it is the unique apolar scheme. We note that $\{1,x,y,x^2,xy,y^2\}$, which is a squat staircase, is not a basis of $A$. Instead, $B=\{1,x,y,x^2,xy,x^2y\}$ is a basis. 

    \medskip

    % \oriol{Now take $v_1=x+xy$, $v_2=y+x^2$; the map $\phi^*$ sends $f=X^2Y$ to $g=T^2W + 2T^2 + 2W^2$, and in this case $\phi$ induces an automorphism on $A$. Note that the squat staircase $\{1,t,w, t^2, tw, w^2\}$ is now a basis of $\K[t,w]/\operatorname{Ann}(g)$.     }
    
%\medskip

%     \ale{(Can we rewrite the above as:)}
    
% \medskip

    Now take $\bar v_1=\bar x+\bar x\bar y$ and
$\bar v_2=\bar y+\bar x^2$, and let
\[
\phi:\K[t,w]\to A,\qquad t\mapsto \bar v_1,\quad w\mapsto \bar v_2.
\]
The pullback  $\phi^*\Lambda$ is represented by $g=T^{(2)}W+2T^{(2)}+2W^{(2)}$.
Moreover, the images of the monomials $\{1,t,w,t^2,tw,w^2\}$ form a basis of $A$. Hence $\phi$ induces an isomorphism $\K[t,w]/\operatorname{Ann}(g)\simeq A$.

\medskip

% \ale{
% (This should not be interpreted as an automorphism of the original $A=\K[x,y]/(x^3,y^2)$: in order to have it we should have something that sends $x \mapsto x+xy$ and $y \mapsto y+y^2$ which is not well defined because it does not preserve $y^2$: in $A$ $(y+x^2)^2=2x^2y\neq 0$ so the image of $y^2$ goes in a non-zero element. What is true is that $\phi:\K[t,w] \to A$, $t\mapsto x+xy$, $w \mapsto y+x^2$ induces an isomorphism $\K[t,w] \simeq Ann(\phi^* \Lambda) \simeq A$.
%      }

\end{example}

We can use a nonlinear change of coordinates in our algorithm as in the following example:

\begin{example}
    Let $f=X^2Y$ and let us recover the scheme $A=\K[x,y]/ \operatorname{Ann}(f)$ using the \cite{Alessandra} method. After dehomogenizing and building the Hankel matrix of $F$, we have 
     \[H_{\Lambda}^{\{1,x,y,x^2,xy,y^2\}}=
\begin{array}{c|cccccc}
                 &1 & x & y &x^2 & xy & y^2  \\
\hline 
1^*               & 0& 0& 0 &0 &0 &0  \\
x^*               & 0& 0& 0 &0 &1 &0  \\
y^*               & 0& 0& 0 &1 &0 &0  \\
(x^2)^*           & 0& 0& 1 &0 &0 &0  \\
(xy)^*               & 0& 1& 0 &0 &0 &0  \\
(y^2)^*               & 0& 0& 0 &0 &0 &0  \\
\end{array}\]
which has determinant 0, as expected since $\{1,x,y,x^2,xy,y^2\}$ is not a basis of $A$. For the basis $B=\{1,x,y,x^2,xy,x^2y\}$, 
 \[H_{\Lambda}^{\{1,x,y,x^2,xy,x^2y\}}=
\begin{array}{c|cccccc}
                 &1 & x & y &x^2 & xy & x^2y  \\
\hline 
1^*               & 0& 0& 0 &0 &0 &1  \\
x^*               & 0& 0& 0 &0 &1 &0  \\
y^*               & 0& 0& 0 &1 &0 &0  \\
(x^2)^*           & 0& 0& 1 &0 &0 &0  \\
(xy)^*               & 0& 1& 0 &0 &0 &0  \\
(x^2y)^*               & 1& 0& 0 &0 &0 &0  \\
\end{array}\]
has determinant $-1$. In this basis, the multiplication operators are
\[M_x=\left(\!\begin{array}{cccccc}
      0&0&0&0&0&0\\
      1&0&0&0&0&0\\
      0&0&0&0&0&0\\
      0&1&0&0&0&0\\
      0&0&1&0&0&0\\
      0&0&0&0&1&0
      \end{array}\!\right) \quad  M_y=\left(\!\begin{array}{cccccc}
      0&0&0&0&0&0\\
      0&0&0&0&0&0\\
      1&0&0&0&0&0\\
      0&0&0&0&0&0\\
      0&1&0&0&0&0\\
      0&0&0&1&0&0
      \end{array}\!\right)\]
      and they commute. These are the matrices of multiplication by $x$ and $y$, respectively, in the algebra $A$ with respect to this basis.

Let's now revise the same example by using squat basis. Consider the map $\phi$ as above, which maps the squat staircase $B=\{1,t,w,t^2,tw,w^2\}$ to $\phi(B)=\{1,x+xy, y+x^2, (x+xy)^2, (x+xy)(y+x^2), (y+x^2)^2\}$. By taking linear combinations of the columns in the Hankel matrix of $H_{\Lambda}$, we have
\[H_\Lambda^{\phi(B)}=\left(\!\begin{array}{cccccc}
      0&0&0&2&0&2\\
      0&2&0&0&1&0\\
      0&0&2&1&0&0\\
      2&0&1&0&0&0\\
      0&1&0&0&0&0\\
      2&0&0&0&0&0
      \end{array}\!\right)\]
    which is invertible and therefore $B$ is a basis of $A$. 
Similarly, the multiplication matrices for this basis are

{\tiny{
\[
M_x^{\phi(B)}=\left(\!\begin{array}{cccccc}
      0&0&0&\frac{1}{2}h_{x_{1}^{7}x_{2}^{2}}&\frac{1}{2}h_{x_{1}^{8}x_{2}}&\frac{1}{2}h_{x_{1}^{9}}\\
      1&0&0&h_{x_{1}^{6}x_{2}^{3}}&h_{x_{1}^{7}x_{2}^{2}}&h_{x_{1}^{8}x_{2}}\\
      0&0&0&h_{x_{1}^{5}x_{2}^{4}}-h_{x_{1}^{7}x_{2}^{2}}&h_{x_{1}^{6}x_{2}^{3}}-h_{x_{1}^{8}x_{2}}&h_{x_{1}^{7}x_{2}^{2}}-h_{x_{1}^{9}}\\
      0&1&0&-2\,h_{x_{1}^{5}x_{2}^{4}}+2\,h_{x_{1}^{7}x_{2}^{2}}&-2\,h_{x_{1}^{6}x_{2}^{3}}+2\,h_{x_{1}^{8}x_{2}}&-2\,h_{x_{1}^{7}x_{2}^{2}}+2\,h_{x_{1}^{9}}\\
      -1&0&1&-2\,h_{x_{1}^{6}x_{2}^{3}}&-2\,h_{x_{1}^{7}x_{2}^{2}}&-2\,h_{x_{1}^{8}x_{2}}\\
      0&\frac{-1}{2}&0&2\,h_{x_{1}^{5}x_{2}^{4}}-2\,h_{x_{1}^{7}x_{2}^{2}}&\frac{1}{2}(4\,h_{x_{1}^{6}x_{2}^{3}}-4\,h_{x_{1}^{8}x_{2}}+1)&2\,h_{x_{1}^{7}x_{2}^{2}}-2\,h_{x_{1}^{9}}
      \end{array}\!\right)\]\[M_y^{\phi(B)}=\left(\!\begin{array}{cccccc}
      0&0&0&\frac{1}{2}h_{x_{1}^{6}x_{2}^{3}}&\frac{1}{2}h_{x_{1}^{7}x_{2}^{2}}&\frac{1}{2}h_{x_{1}^{8}x_{2}}\\
      0&0&0&h_{x_{1}^{5}x_{2}^{4}}&h_{x_{1}^{6}x_{2}^{3}}&h_{x_{1}^{7}x_{2}^{2}}\\
      1&0&0&h_{x_{1}^{4}x_{2}^{5}}-h_{x_{1}^{6}x_{2}^{3}}&h_{x_{1}^{5}x_{2}^{4}}-h_{x_{1}^{7}x_{2}^{2}}&h_{x_{1}^{6}x_{2}^{3}}-h_{x_{1}^{8}x_{2}}\\
      -1&0&0&-2\,h_{x_{1}^{4}x_{2}^{5}}+2\,h_{x_{1}^{6}x_{2}^{3}}&-2\,h_{x_{1}^{5}x_{2}^{4}}+2\,h_{x_{1}^{7}x_{2}^{2}}&-2\,h_{x_{1}^{6}x_{2}^{3}}+2\,h_{x_{1}^{8}x_{2}}\\
      0&1&0&-2\,h_{x_{1}^{5}x_{2}^{4}}&-2\,h_{x_{1}^{6}x_{2}^{3}}&-2\,h_{x_{1}^{7}x_{2}^{2}}\\
      1&0&\frac{1}{2}&\frac{1}{2}(4\,h_{x_{1}^{4}x_{2}^{5}}-4\,h_{x_{1}^{6}x_{2}^{3}}+1)&2\,h_{x_{1}^{5}x_{2}^{4}}-2\,h_{x_{1}^{7}x_{2}^{2}}&2\,h_{x_{1}^{6}x_{2}^{3}}-2\,h_{x_{1}^{8}x_{2}}
      \end{array}\!\right)
\]
}}
Imposing its commutation yields a zero dimensional ideal in the polynomial ring 
\[k\mathopen{}\left[h_{x_{1}^{4}x_{2}^{5}},\,h_{x_{1}^{5}x_{2}^{4}},\,h_{x_{1}^{9}},\,h_{x_{1}^{8}x_{2}},\,h_{x_{1}^{6}x_{2}^{3}},\,h_{x_{1}^{7}x_{2}^{2}}\right]\]
with a unique solution, which is all generators equal to zero. These values of the parameters recover the algebra $A$. 
\end{example}
\begin{remark}
    Since for any monomial $m\in \K[t_1,\ldots,t_n]$ we can compute as above $\phi^*\Lambda(m)=\Lambda(\phi(m))$, we can also obtain the multiplication operators $M_{t_1}\ldots, M_{t_n}$ for the algebra $\K[t_1,\ldots,t_n]/\operatorname{Ann}(\phi^*\Lambda)$ with respect to a squat basis $B$, where the matrices are parametrized by the original moment variables $h_{x^{\alpha}}$. Imposing their commutation could lead to
    %generally gives
    a bigger space of solutions than the commutation of $M_{x_1}\ldots, M_{x_n}$ in the basis $\phi(B)$, since commutation in the $t$-coordinates does not by itself impose compatibility with the original $x$-coordinates. To restrict to the locus compatible with the original $x$-coordinates, one %imposes
    imposes this compatibility, namely that multiplication by $t_i$ corresponds to multiplication by $\phi(t_i)$:
    \[M_{t_i}^B=\phi(t_i)(M_x^{\phi(B)}) \quad i=1,\ldots, n\]
    where $M_x^{\phi(B)}=(M_{x_1}^{\phi(B)},\ldots,M_{x_n}^{\phi(B)})$, and $\phi(t_i)(M_x^{\phi(B)})$ is obtained by substituting in the polynomial $\phi(t_i)$ each variable $x_j$ by the matrix $M_{x_j}^{\phi(B)}$.

    % \ale{(Here $M$ denotes genuine multiplication matrices; if  transpose matrices are scaled by adjugate, the identities must be normalized accordingly.)}
\end{remark}

\begin{example}For $\phi$ as in the previous examples, we compute the Hankel matrix of $\phi^*(\Lambda)$, now indexed by monomials in $t,w$. For example, we have
\[\phi^*(\Lambda)(t^5)=\Lambda(x^{5}y^{5}+5\,x^{5}y^{4}+10\,x^{5}y^{3}+10\,x^{5}y^{2}+5\,x^{5}y+x^{5})=h_{x^5y^5} + 5h_{x^5y^4}.\]

The principal $6\times 6$ subminor is
\[
H_{\phi^*\Lambda}^{\{1,t,w, t^2, tw, w^2\}}=
\begin{array}{c|cccccc}
                 &1 & t & w &t^2 & tw & w^2  \\
\hline 
1^*               & 0& 0& 0 &2 &0 &2  \\
t^*               & 0& 2& 0 &0 &1 &0  \\
w^*               & 0& 0& 2 &1 &0 &0  \\
(t^2)^*           & 2& 0& 1 &0 &0 &0  \\
(tw)^*               & 0& 1& 0 &0 &0 &0  \\
(w^2)^*               & 2& 0& 0 &0 &0 &0  \\
\end{array}\]
which has determinant $-4$. Hence, the squat staircase $B=\{1,t,w, t^2, tw, w^2\}$ is a basis of $\K[t,w]/\operatorname{Ann}(\phi^*\Lambda)$. The commutation of the multiplication operators $M_t^{B}$ and $M_w^{B}$ defines a 9 dimensional ideal in

 $$k\mathopen{}\left[h_{x_{1}^{9}x_{2}},\,h_{x_{1}^{8}x_{2}^{2}},\,h_{x_{1}^{6}x_{2}^{4}},\,h_{x_{1}^{5}x_{2}^{4}},\,h_{x_{1}^{8}x_{2}},\,h_{x_{1}^{5}x_{2}^{5}},\,h_{x_{1}^{6}x_{2
      }^{3}},\,h_{x_{1}^{7}x_{2}^{2}},\,h_{x_{1}^{10}},\,h_{x_{1}^{7}x_{2}^{3}},\,h_{x_{1}^{4}x_{2}^{5}},\,h_{x_{1}^{9}}\right]$$
       of degree 8. Imposing 
       \[M_t^{B}=M_x^{\phi(B)} + M_x^{\phi(B)} M_y^{\phi(B)} \quad \quad M_w^{B}=M_y^{\phi(B)} + \left(M_x^{\phi(B)}\right)^2,\]
       where $M_x^{\phi(B)}$, $M_y^{\phi(B)}$ have been computed in the previous example, gives the expected unique solution. Substituting the solution in $M_t^{B}, M_w^{B}$ gives
        \[ M_t=\left(\!\begin{array}{cccccc}
      0&0&0&0&0&0\\
      1&0&0&0&0&0\\
      0&0&0&0&0&0\\
      0&1&0&0&0&0\\
      0&0&1&0&0&0\\
      0&0&0&0&\frac{1}{2}&0
      \end{array}\!\right) \quad M_w=\left(\!\begin{array}{cccccc}
      0&0&0&0&0&0\\
      0&0&0&0&0&0\\
      1&0&0&0&0&0\\
      0&0&0&0&0&0\\
      0&1&0&0&0&0\\
      0&0&1&\frac{1}{2}&0&0
      \end{array}\!\right),\]
      % \ale{Are you sure that these are the multiplication matrices? I expect that in the basis $B$, the multiplication by $t$ in the first column should have the coordinates of $1\cdot t=t$, i.e. $(0,1,0,0,0,0)^t$. Probably these are simply the transposed, indeed the first row is almost correct... I think that these that you wrote are actually $-4M_t^T$ and the analogous for $M_w$, where I gusess that $-4$ is $\det H_{\phi^*\Lambda}^{B}$.} 
      
      which are the multiplication matrices of the algebra $\K[t,w]/\operatorname{Ann}(\phi^*f)$ in the same basis $\{1,t,w,t^2,tw,w^2\}$. 
\end{example}

 \begin{remark}
     In the language of multiplication tensors, we are simply expressing the tensor in a different basis. Note that if the original tensor is written in the dual of a monomial basis $\{1,x_1,\ldots,x_n,b_{n+1},\ldots,b_{r-1}\}$, after the nonlinear change of coordinates $\phi$ the variables of the tensor do not correspond to the variables of the polynomial ring.   
 \end{remark}

\def\OldSectionEight{
\section{From degree-extension to variable-extension} \label{Section8}
% Hankel-based methods (e.g. \cite{Bernard} for Waring and \cite{Alessandra} for cactus rank) start from the truncated
% apolar data determined by a form $F\in \K_{\mathrm{dp}}[X_0,\dots,X_n]_d$ and search for a degree extension
% of the underlying functional (equivalently: unknown higher moments) in the same polynomial ring
% $R=\K[x_1,\dots,x_n]$, so as to recover commuting multiplication operators and then the apolar scheme.

% In this section we isolate a more intrinsic viewpoint: instead of extending by degree inside $R$, we
% seek a completion of $F$ to a multiplication tensor in a larger ambient space, together with a
% marked embedding of the original variables. This provides a basis-free formulation in which the
% \cite{Bernard, Alessandra} pipeline becomes a special case, while also allowing implementations that can be interpreted
% as \emph{variable-extension} rather than moment extension.

Under the staircase gauge of \Cref{Section 6}, imposing that the marked generators coincide
with the degree-$1$ monomials (i.e.\ $\iota(x_i)=[x_i]\in R/I$) and that the remaining basis elements are
monomials produces exactly the degree/moment-extension setting of \cite{Alessandra}:
the missing coefficients needed to build multiplication matrices are higher moments, and the commutation
constraints on the multiplication operators recover the \cite{Alessandra} system.
Thus \cite{Alessandra} is the monomial/degree-extension specialization of the Multiplication Tensor -- Completion with Marked Generators (MT-CMG) completion problem.

\medskip

We now give a template that implements MT-CMG as a tensor completion problem. It reduces to \cite{Alessandra} in the
degree-extension specialization but it also supports a genuine Variable-Extension implementations by allowing
flexibility in the marked embedding $\iota$.
\vspace{0.5cm}
\begin{mdframed}[]
\begin{alg}[Variable-extension via iterated multiplication tensors]\label{alg:var-ext-imt}\end{alg}
\vspace{0.1cm}
\noindent\textbf{Input:} A degree $d\ge 2$ concise form $F\in \K_{\mathrm{dp}}[X_0,\dots,X_n]_d$, and a target length $r\ge n+1$.\\
\textbf{Output:} Either \texttt{FAIL}, or a length-$r$ commutative algebra $A$ with marked generators
$x_1,\dots,x_n\in A$ and the corresponding affine scheme \\
$Z=\mathrm{Spec}(\K[x_1,\dots,x_n]/I)\subset\mathbb{A}^n$ (hence a cactus scheme for $F$ of length $r$).

\begin{enumerate}
\item\label{choose:basis} \textbf{Choose a complete staircase basis.}
Enumerate complete staircase monomial sets
\[
B=\{b_0=1,x_1,\dots,x_n,\dots,b_{r-1}\}\subset \K[x_1,\dots,x_n]
\]
of cardinality $r$ (i.e.\ closed under divisors).

    \item\label{(2)} \textbf{Build a parametric completion in an enlarged variable space.}
    Let $W^*=\langle X_0,\dots,X_n,X_{n+1},\dots,X_{r-1}\rangle$ and consider a general form
    \[
    G(\lambda)\in \K_{\mathrm{dp}}[X_0,\ldots,X_{r-1}]_d
    \]
    such that the coefficients of $G$ under the identification $b_j^*\leftrightarrow X_j$ correspond to the functional $F(X_0=1)\in \K[x_1,\ldots,x_n]_{\leq d}^*$, with the remaining coefficients as unknowns
    $\lambda$.

    If $\det H^{B,B}(\lambda_0) = 0$, equivalently if the first catalecticant of $x_0^{d-2}\aprod G(\lambda)$ is not full rank, go back to \Cref{choose:basis} with a different monomial set.
    
    \item\label{(3)} \textbf{Construct symbolic multiplication matrices from $G(\lambda)$.}
    Using the recipe in \Cref{rmk: MultMatricesFromTensor},
    construct for each $i=1,\dots,n$ a symbolic matrix $\widetilde M_{x_i}(\lambda)\in M_r(k(\lambda))$
    representing multiplication by the marked element $x_i$ in the basis $B$.
    Concretely, form the symbolic matrices 
    \[
    H^{B,B}(\lambda)=\big(\Lambda_\lambda(b_\alpha b_\beta)\big)_{\alpha,\beta},
    \qquad
    H^{x_iB,B}(\lambda)=\big(\Lambda_\lambda(x_i\,b_\alpha b_\beta)\big)_{\alpha,\beta},
    \]
    where $\Lambda_\lambda(\cdot)$ is read off from the coefficients of $G(\lambda)$ under
    $b_j^*\leftrightarrow X_j$, and set
    \[
    \widetilde M_{x_i}(\lambda)=(H^{B,B}(\lambda))^{-1}H^{x_iB,B}(\lambda)
    \quad\text{whenever }\det H^{B,B}(\lambda)\neq 0.
    \]
    
    \item\label{(4)} \textbf{Impose the algebra constraints (commutativity/associativity).}
    Solve for $\lambda=\lambda_0$ such that
    \[
    \det H^{B,B}(\lambda_0)\neq 0,
    \]
    \[
    \widetilde M_{x_i}(\lambda_0)\widetilde M_{x_j}(\lambda_0)=
    \widetilde M_{x_j}(\lambda_0)\widetilde M_{x_i}(\lambda_0)
    \quad \forall\, i,j=1,\dots,n.
    \]
    If no solution exists for the current $B$, go back to \Cref{choose:basis} and try the next staircase.

    \item\label(5) \textbf{Recover the algebra and the scheme.}
    Set $M_{x_i}:=\widetilde M_{x_i}(\lambda_0)$ and define a $\K[x_1,\ldots,x_n]$-module structure on $A=\langle B\rangle_k$ by $p\cdot v:=(p(M_{x_1},\dots,M_{x_n}))(v)$
    and set
    \[
    I:=\operatorname{Ann}_{\K[x_1,\ldots,x_n]}(A)=\{p\in \K[x_1,\dots,x_n]\ :\ p(M_{x_1},\dots,M_{x_n})(1)=0\}.
    \]
    Output the affine scheme $Z=\mathrm{Spec}(\K[x_1,\dots,x_n]/I)\subset\mathbb{A}^n$.

    \item\label{(6)} \textbf{(Optional) Explicit generators via staircase rewriting relations.}
    Let $\partial B=\{x_i b:\ b\in B,\, i=1,\dots,n\}\setminus B$.
    For each $m=x_i b_j\in\partial B$, define
    \[
    g_m:=x_i b_j-\sum_{\ell=0}^{r-1}(M_{x_i})_{\ell j}\,b_\ell\in \K[x_1,\dots,x_n].
    \]
    Then $I=\langle g_m:\ m\in\partial B\rangle$ and these $g_m$ give explicit equations of $Z$.

    \item\label{(7)} \textbf{(Optional) Support and local structure.}
    Compute common eigenvectors of the commuting family $\{M_{x_i}^t\}$ to recover the support points of $Z$.
    Recover local lengths and nonreduced structure from joint generalized eigenspaces (Jordan chains) of
    $\{M_{x_i}^t\}$ on each local block (as in the Bernardi--Taufer reconstruction pipeline).
\end{enumerate}
\end{mdframed}

\ale{We could replace the above Algorithm with the following.}

in the Bernardi–Taufer reconstruction pipeline).

\vspace{0.5cm}

\ale{
\begin{example}
Let $F=X_0^{[3]}X_2+X_0^{[2]}X_1^{[2]}+X_0^{[2]}X_1X_2+X_2^{[4]}$. Its dehomogenization is $f=F(1,X,Y)=Y+X^{[2]}+XY+Y^{[4]}$. The apolar ideal is $I=\mathrm{Ann}(f)=(x^2-xy,\ xy-y^4,\ y^5)\subset \K[x,y]$, and hence $A:=\K[x,y]/I$ is a local Artinian Gorenstein algebra of length $6$.

We run \Cref{alg:min-apolar-squat} with target length $r=6$ and choose the
squat staircase
\[
E=\{1,v,w,w^2,w^3,w^4\}.
\]
Inside $A$, define
\[
v:=\bar x-\bar y^3,
\qquad
w:=\bar y.
\]
Then
\[
vw=(\bar x-\bar y^3)\bar y=\bar x\bar y-\bar y^4=0,
\]
\[
v^2=(\bar x-\bar y^3)^2=\bar x^2-2\bar x\bar y^3+\bar y^6=\bar y^4=w^4,
\]
and
\[
w^5=\bar y^5=0.
\]
Therefore
\[
A\simeq \K[v,w]/(vw,\ v^2-w^4,\ w^5),
\]
and
\[
B_E=(1,v,w,w^2,w^3,w^4)
\]
is a basis of $A$ indexed by the squat staircase $E$.

Let
\[
\Lambda:A\to\K
\]
be the functional extracting the coefficient of $w^4$, namely
\[
\Lambda(1)=\Lambda(v)=\Lambda(w)=\Lambda(w^2)=\Lambda(w^3)=0,
\qquad
\Lambda(w^4)=1.
\]

The original variables are not represented by the squat generators $v,w$, but by the transported marked generators
\[
u_x:=\bar x=v+w^3,
\qquad
u_y:=\bar y+w^4.
\]
Thus, in the basis $B_E$, we have
\[
u_x=b_1+b_4,
\qquad
u_y=b_2+b_5.
\]
In particular, both marked generators involve basis directions beyond the
original degree-$1$ directions $v,w$.

Their moments recover the coefficients of $f$:
\[
\Lambda(u_x)=0,\qquad \Lambda(u_y)=1,
\]
\[
u_x^2=(v+w^3)^2=v^2+2vw^3+w^6=w^4,
\qquad\Rightarrow\qquad
\Lambda(u_x^2)=1,
\]
\[
u_xu_y=(v+w^3)(w+w^4)=vw+vw^4+w^4+w^7=w^4,
\qquad\Rightarrow\qquad
\Lambda(u_xu_y)=1,
\]
\[
u_y^2=(w+w^4)^2=w^2,\qquad
u_y^3=w^3,\qquad
u_y^4=w^4,
\]
hence
\[
\Lambda(u_y^2)=0,\qquad
\Lambda(u_y^3)=0,\qquad
\Lambda(u_y^4)=1.
\]
Therefore the moments of $u_x,u_y$ recover exactly
\[
f=Y+X^{[2]}+XY+Y^{[4]}.
\]

%By contrast, if one used the squat generators $v,w$ themselves, then one would
%obtain the moments of
%\[
%X^{[2]}+Y^{[4]},
%\]
%since
%\[
%\Lambda(v)=\Lambda(w)=0,\qquad
%\Lambda(v^2)=1,\qquad
%\Lambda(vw)=0,\qquad
%\Lambda(w^4)=1.
%\]
%Thus the variable extension is genuinely needed in this example.

Finally, the induced homomorphism
\[
\K[x,y]\longrightarrow A,\qquad x\mapsto u_x,\quad y\mapsto u_y
\]
has kernel
\[
(x^2-xy,\ xy-y^4,\ y^5),
\]
so the algorithm returns
\[
Z=\mathrm{Spec}\bigl(\K[x,y]/(x^2-xy,\ xy-y^4,\ y^5)\bigr).
\]
\end{example}
}
%\begin{remark}
%\label{rmk:marking-template}
%In \Cref{alg:var-ext-imt} we work in the standard marking where the generators are the classes
%of the linear monomials, i.e.\ $\iota(x_i)=[x_i]\in R/I$ once $A\simeq R/I$.
%More general markings (allowing $\iota(x_i)$ to be an arbitrary element of $A$) fit the basis-free \Cref{def:MT-CMG} but lead to additional unknowns and are not pursued in this template.
%\end{remark}

\begin{remark}
This viewpoint is close in spirit to the quantum marginal (state extension) problem; see, e.g.,
\cite{TycVlach2015,Schilling2014}. 
For extension-based consistency criteria (via symmetric extensions), see \cite{DohertyParriloSpedalieri2004}.
\end{remark}

\begin{remark}
Although the focus of this paper is cactus rank, the same computational framework can also be used  toward Waring decompositions.
Indeed, once a completion produces commuting multiplication matrices, one may continue by increasing the size of the basis to recover a reduced apolar scheme, i.e.\ an apolar ideal defining a set of distinct points, for instance by checking whether the reconstructed ideal is radical (equivalently, whether the multiplication operators are simultaneously diagonalizable).
When this succeeds, the resulting points yield a Waring decomposition of the corresponding length.
\end{remark}

% \begin{remark}[Two realizations of the same length-$r$ scheme: apolar to $G$ and apolar to $F$]
% Assume \Cref{alg:var-ext-imt} returns a solution.
% By construction we obtain an Artinian Gorenstein algebra $A$ of length $r$ and a form
% \[
% G=\mu^{(d-1)}_{A,s}\in \K_{\mathrm{dp}}[W^*]_d,
% \]
% hence $\mathrm{Spec}(A)$ is (in the standard apolarity sense) an apolar scheme for $G$ in the ambient space dual to $W^*$.
% On the other hand, the marked morphism $\rho:k[x_1,\dots,x_n]\to A$ defines the scheme
% \[
% Z_F=\mathrm{Spec}\big(k[x_1,\dots,x_n]/\ker\rho\big)\subset \mathrm{A}^n,
% \]
% which is apolar to $F=G|_{V^*}$.
% If $\rho$ is surjective, then $A\simeq k[x_1,\dots,x_n]/\ker\rho$ and the canonical map
% $\mathrm{Spec}(A)\to Z_F$ is an isomorphism.
% Thus the construction produces the same length-$r$ scheme $\mathrm{Spec}(A)$ together with two different
% realizations: one as an apolar scheme for $G$ in (typically) higher dimension, and one as the cactus/apolar
% scheme for $F$ in $\mathbb{A}^n$.
% \end{remark}

}

\section{From degree-extension to variable-extension in a squat basis}\label{Section8}

In the degree-extension approach one works with a monomial staircase basis
containing the marked generators $x_1,\ldots,x_n$. After a nonlinear change of
coordinates, however, the squat generators $t_1,\ldots,t_n$ do not represent the
original marked variables. Therefore, instead of explicitly computing the transported
marked generators as polynomials in the $t$-variables, we keep the original
moment functional in the $x$-coordinates and encode the marking through the
multiplication matrices by $x_1,\ldots,x_n$.

More precisely, let
\[
\phi:\K[t_1,\ldots,t_n]\to \K[x_1,\ldots,x_n],
\qquad
t_i\mapsto v_i(x),
\]
where $v_i=x_i+p_i(x)$, or more generally where the $v_i$'s are chosen so that
a prescribed squat staircase becomes a basis after applying $\phi$.
For a squat staircase $E\subset \mathbb N^n$, set
\[
B_E=\{t^e:e\in E\}.
\]
We compute the Hankel matrices of $\phi^*\Lambda$ in the $t$-basis $B_E$,
but all entries are still expressed in the original moment variables $\Lambda(x^\alpha)$.
In this way the matching with the input form $F$ is imposed directly by the
conditions $\Lambda(x^\alpha)=f_\alpha$, $|\alpha|\le d$, and no inverse change of variables is required.

\vspace{0.5cm}
\begin{mdframed}[]
\begin{alg}[Cactus rank and decomposition via variable-extension in a squat basis]
\label{alg:cactus-squat-variable}
\end{alg}
\vspace{0.1cm}

\noindent\textbf{Input:} A degree $d\geq 2$ polynomial $F\in S_d$.\\
\textbf{Output:} Cactus rank of $F$.

\begin{enumerate}
    \item Construct the matrix $H_{\Lambda(h)}$ with parameters
    $\{h_{\alpha}\}_{\alpha\in\mathbb N^n}$, $|\alpha|>d$.

    \item\label{alg:squat:H:star} Set $r$ as the highest rank of a numerical subminor of
    $H_{\Lambda(h)}$.

    \item\label{alg:squat:basis} Take a squat staircase $E\subset\mathbb N^n$, $|E|=r$,
    and set
    \[
        B_E=\{t^e:e\in E\}\subset \K[t_1,\ldots,t_n].
    \]
    Choose elements $v_1,\ldots,v_n\in R=\K[x_1,\ldots,x_n]$, and consider
    \[
        \phi:\K[t_1,\ldots,t_n]\to R,\qquad t_i\mapsto v_i(x).
    \]
    In local computations one may take $v_i=x_i+p_i(x)$, with $p_i$ of order at
    least $2$. Do:

    \begin{itemize}
        \item Construct the matrix
        \[
            H_{\phi^*\Lambda(h)}^{B_E}
            =
            H_{\Lambda(h)}^{\phi(B_E)}
            =
            \left(
            \Lambda(h)(v^{e+e'})
            \right)_{e,e'\in E}.
        \]
        
        \item\label{alg: cactus; step: shifted mult. matrices} Construct the multiplication operators
        \[            M_{x_1}^{\phi(B_E)},\ldots,M_{x_n}^{\phi(B_E)}
        \]
        in the basis $\phi(B_E)=\{v^e:e\in E\}$,
        where 
        \[   \left(M_{x_j}^{\phi(B_E)}\right)^{T} \cdot H_{\Lambda(h)}^{\phi(B_E)}=\left(
            \Lambda(h)(x_jv^{e+e'})
            \right)_{e,e'\in E},
            \qquad j=1,\ldots,n.\]

        \item \label{alg: cactus; step: Solve commutator} Find $h$'s such that 
        \begin{itemize}
         \item $H_{\phi^*\Lambda(h)}^{B_E}$ has nonzero determinant;
            \item the multiplication operators $M_{x_1}^{\phi(B_E)},\ldots,M_{x_n}^{\phi(B_E)}$
            commute;
        \end{itemize}
            
        \item If found, the cactus rank of $F$ is $r$. Moreover, the apolar algebra is
        recovered from the representation
        \[
            R\to \operatorname{End}_k(\langle B_E\rangle),
            \qquad
            x_j\mapsto M_{x_j}^{\phi(B_E)}.
        \]
        Equivalently,
        \[
            A\simeq R/I,\qquad
            I=
            \left\{
            p\in R:
            p\!\left(
            M_{x_1}^{\phi(B_E)},\ldots,M_{x_n}^{\phi(B_E)}
            \right)\cdot 1=0
            \right\}.
        \]
        If not, go to \Cref{alg:squat:basis} with another choice of $E$. If all choices with $|E|=r$ have been already performed, go to
        \Cref{increase:squat:r}.
    \end{itemize}

    \item\label{increase:squat:r} Set $r\to r+1$ and go to
    \Cref{alg:squat:basis}.
\end{enumerate}
\end{mdframed}
\vspace{0.5cm}

\begin{remark}\label{rmk: MomentsVsDegrees}
 Although taking squat staircases after a general nonlinear change of coordinates considerably reduces the number of candidate bases (cf. \Cref{Section 6}), it can also increase the complexity of the polynomial system given by the commutation of the matrices, subject to the open condition $\det H_{\Lambda(h)}\neq 0$. Let $d=\deg F$ and let $M$ be the maximum degree of a candidate basis $B$. To decide whether to reduce the number of bases or solve simpler polynomial systems, we have the following analysis:
 \begin{itemize}
    \item If $2M+1\leq  d$, then both $H_{\Lambda(h)}^B$ and $M_{x_i}$ are numerical matrices, that is, they have no dependency on the variables $h_{x^\alpha}$. Thus, in this case \Cref{alg: cactus; step: Solve commutator} in \Cref{alg:cactus-squat-variable} consists of checking whether numerical matrices commute. 

    \item If $d$ is even and $2M=d$, then $H_{\Lambda(h)}^B$ is numerical, while $M_{x_i}^B$ have some entries that are linear expressions in the $h$'s, and therefore the commutation step in \Cref{alg:cactus-squat-variable} yields an ideal generated by quadrics. 

    \item If $2M>d$, then both $H_{\Lambda(h)}^B$ and $M_{x_i}^B$ depend on moment variables $h_{x^\alpha}$, and therefore we have a more complex polynomial system arising from the commutator, as well as the open condition $\det H_{\Lambda(h)}^B\neq 0$. 
 \end{itemize}
\end{remark}

% \begin{remark}
% In \Cref{alg:cactus-squat-variable}, the symbols
% $M_{x_j}^{\phi(B_E)}$ and $M_{t_i}^{B_E}$ denote genuine multiplication
% matrices. If the implementation uses adjugate-scaled transpose matrices
% \[
%     \widetilde M_a
%     =
%     H_{a,E}\operatorname{adj}(H_E)
%     =
%     \det(H_E)M_a^t,
% \]
% then the compatibility equations must be imposed after the normalization $M_a=\det(H_E)^{-1}\widetilde M_a^t$.
% \end{remark}

\begin{example}
Let $k$ be a field of characteristic $0$, and let
\[
F=X_0^{(4)}X_1^{(2)}+X_0^{(4)}X_1X_2
\in \K_{\mathrm{dp}}[X_0,X_1,X_2]_6.
\]
After dehomogenizing with respect to $X_0$, we get
\[
f=F(X_0=1)=X^{(2)}+XY\in \K_{\mathrm{dp}}[X,Y]_{\le 6}.
\]
Thus the corresponding truncated functional $\Lambda\in \K[x,y]^*$ satisfies $\Lambda(x^2)=1$, $\Lambda(xy)=1$, and all other moments of degree $\le 6$ are equal to $0$.

The known Hankel matrix contains the nonzero minor indexed by $B=\{1,x,y,x^2\}$.
Indeed,
\[
H_\Lambda^{\{1,x,y,x^2\}}
=
\begin{pmatrix}
0&0&0&1\\
0&1&1&0\\
0&1&0&0\\
1&0&0&0
\end{pmatrix},
\qquad
\det H_\Lambda^{\{1,x,y,x^2\}}=1.
\]
Hence the algorithm starts with $r=4$, and $B$ is a candidate basis. We see that the elements in $B$ have degree at most 2, and therefore by \Cref{rmk: MomentsVsDegrees} the multiplication matrices are numerical:
\[M_{x}^B=\left(\!\begin{array}{cccc}
      0&0&0&0\\
      1&0&0&0\\
      0&0&0&0\\
      0&1&1&0
      \end{array}\!\right) \quad M_y^B=\left(\!\begin{array}{cccc}
      0&0&0&0\\
      0&0&0&0\\
      1&0&0&0\\
      0&1&0&0
      \end{array}\!\right)
\]
These matrices commute and satisfy
\[
M_y^2=0,
\qquad
M_x^2=M_xM_y.
\]
Therefore the recovered ideal is
\[
I=(y^2,\ x^2-xy)\subset \K[x,y].
\]
Hence
\[
A\simeq \K[x,y]/(y^2,\ x^2-xy).
\]
This algebra has basis $\{1,x,y,x^2\}$
and length $4$. Since the initial Hankel lower bound was already $r=4$, the
algorithm returns
$\operatorname{crk}(F)=4$.

If we instead consider a nonlinear change of variables, say $v_1=x+x^2$, $v_2=y+xy$ in \Cref{alg:cactus-squat-variable}, then for the squat staircase $\{1,v_1,v_2,v_1^2\}$ we have 
\[H_{\Lambda(h)}^{\phi(B)}=\left(\!\begin{array}{cccc}
      0&1&1&1\\
      1&1&1&0\\
      1&1&0&0\\
      1&0&0&h_{x_{1}^{8}}+4\,h_{x_{1}^{7}}
      \end{array}\!\right),
      \]
which has determinant $h_{x_{1}^{8}}+4\,h_{x_{1}^{7}}+1$. The commutation $[M_x^{\phi(B)}, M_y^{\phi(B)}]=0$ produces an ideal minimally generated in degrees $1,2,2,2,2,3$. Thus, in this case testing all the possible staircases (which are 3 in total) is more efficient than considering a single squat staircase after a nonlinear change of coordinates.
\end{example}

\bibliographystyle{alphaurl}
\bibliography{bibliography}

\end{document}